\newcommand{\IU}{\mathcal{U}^\ast_{-\infty}}
\newcommand{\IN}{\mathbb{N}}
\newcommand{\IZ}{\mathbb{Z}}
\newcommand{\IR}{\mathbb{R}}
\newcommand{\IC}{\mathbb{C}}
\newcommand{\injrad}{\operatorname{inj-rad}}
\newcommand{\Rm}{\operatorname{Rm}}
\newcommand{\LLip}{L\text{-}\operatorname{Lip}}
\newcommand{\IB}{\mathfrak{B}}
\newcommand{\IK}{\mathfrak{K}}
\newcommand{\supp}{\operatorname{supp}}
\newcommand{\diam}{\operatorname{diam}}
\newcommand{\Hom}{\operatorname{Hom}}
\newcommand{\id}{\operatorname{id}}
\newcommand{\kernel}{\operatorname{ker}}
\newcommand{\ind}{\operatorname{ind}}
\newcommand{\op}{\mathrm{op}}
\newcommand{\Symb}{\operatorname{Symb}}
\newcommand{\UPsiDO}{\mathrm{U}\Psi\mathrm{DO}}
\newcommand{\Frechet}{Fr\'{e}chet }
\newcommand{\Spakula}{\v{S}pakula }
\theoremstyle{plain}
\newtheorem{thm}{Theorem}[section]
\newtheorem*{thm*}{Theorem}
\newtheorem*{mainthm*}{Main Result}
\newtheorem{cor}[thm]{Corollary}
\newtheorem*{cor*}{Corollary}
\newtheorem{lem}[thm]{Lemma}
\newtheorem{prop}[thm]{Proposition}
\newtheorem{question}[thm]{Question}
\theoremstyle{definition}
\newtheorem{defn-alt}[thm]{Definition}
\newtheorem{example-alt}[thm]{Example}
\newtheorem{examples-alt}[thm]{Examples}
\newtheorem{rem-alt}[thm]{Remark}
\newtheorem{nota-alt}[thm]{Notation}
\newenvironment{defn}    
{
	\pushQED{\qed}\begin{defn-alt}}
	{\popQED\end{defn-alt}}
\newenvironment{example}    
{
	\pushQED{\qed}\begin{example-alt}}
	{\popQED\end{example-alt}}
\newenvironment{examples}    
{
	\pushQED{\qed}\begin{examples-alt}}
	{\popQED\end{examples-alt}}
\newenvironment{rem}    
{
	\pushQED{\qed}\begin{rem-alt}}
	{\popQED\end{rem-alt}}
\numberwithin{equation}{section}
\def\blfootnote{\gdef\@thefnmark{}\@footnotetext}
\begin{document}

\title{$K$-homology classes of elliptic uniform pseudodifferential operators}
\author{Alexander Engel}
\date{}
\maketitle

\vspace*{-3.5\baselineskip}
\begin{center}
\footnotesize{
\textit{
Fakult\"{a}t f\"{u}r Mathematik\\
Universit\"{a}t Regensburg\\
93040 Regensburg, GERMANY\\
\href{mailto:alexander.engel@mathematik.uni-regensburg.de}{alexander.engel@mathematik.uni-regensburg.de}
}}
\end{center}

\vspace*{-0.5\baselineskip}
\begin{abstract}
We show that an elliptic uniform pseudodifferential operator over a manifold of bounded geometry defines a class in uniform $K$-homology, and that this class only depends on the principal symbol of the operator. 
\end{abstract}

\tableofcontents

\section{Introduction}

Pseudodifferential operators are an indispensable tool in the study of elliptic differential operators (like Dirac operators) and their index theory. The calculus of pseudodifferential operators on compact manifolds encompasses parametrices of elliptic differential operators, i.e., their inverses up to smoothing operators, which enables one to deduce the usual important results about elliptic operators like elliptic regularity. Also, the first published proof of the Atiyah--Singer index theorem \cite{atiyah_singer_1} goes via pseudodifferential operators.

The first goal of the present paper is to set up a suitable calculus of pseudodifferential operators on non-compact manifolds. It turns out that the only for us useful definition of such a calculus is the uniform one, and that such a definition is only possible on manifolds of bounded geometry. Let us explain why on non-compact manifolds we have to consider uniform pseudodifferential operators. Recall that on $\IR^m$ an operator $P$ is called pseudodifferential, if it is given by
\[(Pu)(x) = (2 \pi)^{-n/2} \int_{\IR^m} e^{i \langle x, \xi\rangle} p(x, \xi) \hat u(\xi) \ d\xi,\]
where $\hat u$ denotes the Fourier transform of $u$ and the function $p(x,\xi)$ satisfies for some $k \in\IZ$ the estimates $\|D_x^\alpha D_\xi^\beta p(x,\xi)\| \le C^{\alpha \beta} (1 + |\xi|)^{k - |\beta|}$ for all multi-indices $\alpha$ and $\beta$. On manifolds one calls an operator pseudodifferential if one has the above representation in any local chart. But if the manifold is not compact, we get the problem that this is not sufficient to guarantee that the operator has continuous extensions to Sobolev spaces.\footnote{We are ignoring in this discussion the fact that on non-compact manifolds we also need a condition on the behaviour of the integral kernel of $P$ at infinity.} For this we additionally have to require that the above bounds $C^{\alpha \beta}$ are uniform across all the local charts. But since this is not well-defined (choosing a different atlas may distort the bounds arbitrarily large across the charts of the atlas), we will have to restrict the charts to exponential charts and additionally we will have to assume that our manifold has bounded geometry (these restrictions become clear when one looks at Lemma \ref{lem:transition_functions_uniformly_bounded}).

Our calculus of pseudodifferential operators on manifolds of bounded geometry is not totally new: Kordyukov \cite{kordyukov}, Shubin \cite{shubin} and Taylor \cite{taylor_pseudodifferential_operators_lectures} already developed similar calculi. Let us explain the difference between their calculi and the one we develop in this paper. The underlying reason that different versions of such calculi are possible is due to the fact that on non-compact manifolds one needs to control the behaviour of the integral kernels of these operators at infinity. One possibility is to impose finite propagation, i.e., demanding that there is an $R > 0$ such that the integral kernel $k(x,y)$ of the pseudodifferential operator vanishes for all $x,y$ with $d(x,y) > R$ (recall that pseudodifferential operators always have an integral kernel that is smooth outside the diagonal). More generally, one can require an exponential decay of the integral kernel at infinity, and usually this decay should be faster than the volume growth of the manifold. In the present article we will require that our pseudodifferential operators are quasilocal\footnote{An operator $A \colon H^r(E) \to H^s(F)$ is \emph{quasilocal}, if there exists some function $\mu\colon \IR_{> 0} \to \IR_{\ge 0}$ with $\mu(R) \to 0$ for $R \to \infty$ and such that for all $L \subset M$ and all $u \in H^r(E)$ with $\supp u \subset L$ we have $\|A u\|_{H^s, M - B_R(L)} \le \mu(R) \cdot \|u\|_{H^r}$.}, since this seems to be in a certain sense the most general notion which we may impose (see, e.g., the proof of Corollary \ref{cor:quasilocal_neg_order_uniformly_locally_compact} for how quasi-locality is used).

Let us explain why we want our operators to be quasilocal. To construct the large scale index class of an operator $D$ of Dirac type, we have to consider the operator $f(D)$, where $f$ is a Schwartz function with $f(0) = 1$. Now usually $f(D)$ will not have finite propagation, but it will be a quasilocal operator. This was proven by Roe for operators of Dirac type \cite[Section~5]{roe_index_1} and we will generalize this crucial fact to elliptic uniform pseudodifferential operators, see Corollary~\ref{cor:schwartz_function_of_PDO_quasilocal_smoothing}. So even though we could restrict to finite propagation uniform pseudodifferential operators and use the fact that $f(P)$ will be quasilocal whenever we need, we would leave our class of finite propagation operators in this way. So working from the beginning with quasilocal operators leads to the fact that we never have to leave this class. Note that the proof of the fact that $f(P)$ is quasilocal requires substantial analysis and is one of our key technical lemmas. It relies on a close analysis of the propagation properties of the wave operators $e^{itP}$ associated to an elliptic uniform pseudodifferential operator $P$, see Lemma~\ref{lem:exp(itP)_quasilocal}.

The approach to index theory preferred by the author is the one via $K$-homology. This is a generalized homology theory in which elliptic operators naturally define classes. On non-compact manifolds of bounded geometry an important branch of index theory (large scale index theory) is investigated via, e.g., the rough assembly map $K_\ast^u(-) \to K_\ast(C^\ast_u -)$ developed by \Spakula (the coarse setting is actually more common,\footnote{Coarse index theory was mainly developed by Roe \cite{roe_coarse_cohomology,roe_index_coarse,roe_lectures_coarse_geometry}.} but since we have to work with uniform operators anyway the more natural approach is to use the rough theory which is as a uniform version of the coarse one). Here $K_\ast^u(-)$ denotes uniform $K$-homology and $K_\ast(C^\ast_u -)$ is the $K$-theory of the uniform Roe algebra. Given an operator of Dirac type over a manifold of bounded geometry, \Spakula showed \cite[Section~3]{spakula_uniform_k_homology} that it has a class in uniform $K$-homology and therefore large scale index theory can be applied. Our investigations of uniform pseudodifferential operators on manifolds of bounded geometry and our analysis of the propagation properties of functions of such operators enables us to generalize the result of \Spakula and show that elliptic uniform pseudodifferential operators also have classes in uniform $K$-homology. For the resulting index theory see \cite{engel_rough} and \cite[Section 5]{engel_indices_UPDO}.

\begin{mainthm*}[Theorem~\ref{thm:elliptic_symmetric_PDO_defines_uniform_Fredholm_module} and Proposition~\ref{prop:same_symbol_same_k_hom_class}]
Let $P$ be a symmetric and elliptic uniform pseudodifferential operator acting on a vector bundle of bounded geometry over a manifold $M$ of bounded geometry.

Then $P$ defines naturally a uniform $K$-homology class $[P] \in K_\ast^u(M)$ and this class does only depend on the principal symbol of $P$.
\end{mainthm*}

\paragraph{Acknowledgements} This article contains mostly Section~2 of the preprint \cite{engel_indices_UPDO} which is being split up for easier publication. It arose out of the Ph.D.\ thesis \cite{engel_phd} of the author written at the University of Augsburg.

\section{Bounded geometry}

We will define in this section the notion of bounded geometry for manifolds and for vector bundles and discuss basic facts about uniform $C^r$-spaces and Sobolev spaces on them.

\begin{defn}
We will say that a Riemannian manifold $M$ has \emph{bounded geometry}, if
\begin{itemize}
\item the curvature tensor and all its derivatives are bounded, i.e., $\| \nabla^k \Rm (x) \| < C_k$ for all $x \in M$ and $k \in \IN_0$, and
\item the injectivity radius is uniformly positive, i.e., $\injrad_M(x) > \varepsilon > 0$ for all points $x \in M$ and for a fixed $\varepsilon > 0$.
\end{itemize}
If $E \to M$ is a vector bundle with a metric and compatible connection, we say that \emph{$E$ has bounded geometry}, if the curvature tensor of $E$ and all its derivatives are bounded.
\end{defn}

\begin{examples}
There are plenty of examples of manifolds of bounded geometry. The most important ones are coverings of compact Riemannian manifolds equipped with the pull-back metric, homogeneous manifolds with an invariant metric, and leafs in a foliation of a compact Riemannian manifold (this is proved by Greene in \cite[lemma on page 91 and the paragraph thereafter]{greene}).

For vector bundles, the most important examples are of course again pull-back bundles of bundles over compact manifolds equipped with the pull-back metric and connection, and the tangent bundle of a manifold of bounded geometry.

Furthermore, if $E$ and $F$ are two vector bundles of bounded geometry, then the dual bundle $E^\ast$, the direct sum $E \oplus F$, the tensor product $E \otimes F$ (and so especially also the homomorphism bundle $\Hom(E, F) = F \otimes E^\ast$) and all exterior powers $\Lambda^l E$ are also of bounded geometry. If $E$ is defined over $M$ and $F$ over $N$, then their external tensor product\footnote{The fiber of $E \boxtimes F$ over the point $(x,y) \in M \times N$ is given by $E_x \otimes F_y$.} $E \boxtimes F$ over $M \times N$ is also of bounded geometry.
\end{examples}

Greene proved in \cite[Theorem 2']{greene} that there are no obstructions against admitting a metric of bounded geometry, i.e., every smooth manifold without boundary admits one. On manifolds of bounded geometry there is also no obstruction for a vector bundle to admit a metric and compatible connection of bounded geometry. The proof (i.e., the construction of the metric and the connection) is done in a uniform covering of $M$ by normal coordinate charts and subordinate uniform partition of unity (we will discuss these things in a moment) and we have to use the local characterization of bounded geometry for vector bundles from Lemma \ref{lem:equiv_characterizations_bounded_geom_bundles}.

We will now state an important characterization in local coordinates of bounded geometry since it will allow us to show that certain local definitions (like the one of uniform pseudodifferential operators) are independent of the chosen normal coordinates.

\begin{lem}[{\cite[Appendix A1.1]{shubin}}]\label{lem:transition_functions_uniformly_bounded}
Let the injectivity radius of $M$ be positive.

Then the curvature tensor of $M$ and all its derivatives are bounded if and only if for any $0 < r < \injrad_M$ all the transition functions between overlapping normal coordinate charts of radius $r$ are uniformly bounded, as are all their derivatives (i.e., the bounds can be chosen to be the same for all transition functions).
\end{lem}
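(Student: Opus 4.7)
The plan is to proceed via the intermediate characterization: uniform bounds on $\Rm$ and all its covariant derivatives are equivalent to uniform $C^k$-bounds (for every $k$) on the metric components $g_{ij}$ (and $g^{ij}$) in normal coordinates of a fixed radius $r < \injrad_M$; and this latter condition is in turn equivalent to uniformity of the transition functions. I would establish the two equivalences separately.

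For the equivalence between curvature bounds and uniform metric bounds in normal coordinates, the main tool is the Jacobi field ODE. Along a radial geodesic in normal coordinates centered at $p$, the metric tensor satisfies an ODE whose coefficients are universal polynomial expressions in $\nabla^j \Rm$ evaluated along the geodesic. Iterating, the $k$-th Taylor coefficient of $g_{ij}$ at the origin becomes a universal polynomial in the $\nabla^j \Rm(p)$ for $j \le k$, so uniform curvature bounds translate into uniform $C^k$-bounds for $g_{ij}$ on the ball of radius $r$. Conversely, the curvature tensor is a universal polynomial in $g_{ij}$, $g^{ij}$ and their first and second partial derivatives, and since $g_{ij}(0) = \delta_{ij}$ the matrix $(g_{ij})$ stays uniformly close to the identity on a small enough ball, so $g^{ij}$ is uniformly controlled too; hence uniform $C^k$-bounds on $g_{ij}$ give uniform bounds on $\nabla^j \Rm$.

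For the equivalence with bounded transition functions, I would write the transition from the normal chart at $p$ to the one at $q$ as $\phi_{pq} = \exp_q^{-1} \circ \exp_p$ on the overlap. The differentials $d\exp_p$ and their higher derivatives are expressible through solutions of the Jacobi equation and its higher-order variational analogues along radial geodesics, so uniform bounds on $\nabla^j \Rm$ yield uniform $C^k$-bounds on $d\exp_p$; combined with a uniform lower bound for the Jacobian (coming from $r < \injrad_M$ and the same Jacobi analysis), they also yield uniform bounds on $d\exp_p^{-1}$, and by the chain rule uniform $C^k$-bounds on $\phi_{pq}$. Conversely, because $g_{ij}$ vanishes to the appropriate order at the origin in every normal chart, the value of $g_{ij}(x)$ in the chart centered at $p$ can be recovered from a tensorial expression in $d\phi_{px}$ evaluated at $0$, so uniform control of all transition functions gives uniform control of the $g_{ij}$.

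The main obstacle I expect is the careful propagation of uniform constants through the higher-order Jacobi and variational equations, so that the resulting bounds depend only on the $C_k$ and on the lower bound $\varepsilon$ for the injectivity radius, and not on the points $p$, $q$ or on the choice of orthonormal frame used to define the normal coordinates. The role of $\injrad_M > 0$ is precisely to keep the exponential map a diffeomorphism on balls of a fixed size $r$ and to keep its Jacobian uniformly bounded away from zero there; without this, the same curvature bounds would not suffice to control $d\exp_p^{-1}$. The remaining algebraic manipulations — expressing $\Rm$ in terms of $g_{ij}$ and expressing $g_{ij}$ through $\phi_{pq}$ — are essentially bookkeeping once these analytic estimates are in place.
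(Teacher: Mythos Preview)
The paper does not prove this lemma at all; it is quoted from Shubin's appendix and used as a black box. So there is nothing in the paper to compare against, and your proposal has to be assessed on its own merits.

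Your forward direction is the standard argument and is correctly outlined: bounds on $\nabla^j\Rm$ give, via the Jacobi equation and its iterated derivatives, uniform $C^k$-bounds on $g_{ij}$ in normal coordinates, and these in turn control $d\exp_p$, its inverse (here the uniform positive injectivity radius enters exactly as you say, to keep the Jacobian uniformly nondegenerate), and hence the compositions $\phi_{pq}=\exp_q^{-1}\circ\exp_p$.

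The converse sketch, however, has a real gap. The identity $g^{(p)}_{ij}(y)=\sum_k\partial_i\phi_{px}^k(y)\,\partial_j\phi_{px}^k(y)$ at $y=\exp_p^{-1}(x)$, using $g^{(x)}(0)=\delta$, does give a uniform $C^0$-bound on $g^{(p)}$; and since also $\partial g^{(x)}(0)=0$ in normal coordinates one can push this to $C^1$ by differentiating the transformation law once \emph{before} substituting $x=\exp_p(y)$. But from order two on, the twice-differentiated transformation law produces an unavoidable term $(\partial\phi)^{\otimes 4}\cdot\partial^2 g^{(x)}(0)$, and $\partial^2 g^{(x)}(0)$ is exactly (a fixed linear expression in) $\Rm(x)$ --- the very quantity you are trying to bound. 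You cannot sidestep this by differentiating your displayed formula in $y$: the right-hand side depends on $y$ both through the evaluation point and through the second centre $x=\exp_p(y)$, whereas the hypothesis controls only derivatives of $\phi_{px}$ in its coordinate argument, not its dependence on the centre. So ``uniform control of all transition functions gives uniform control of the $g_{ij}$'' is justified by your sentence only up to $C^1$; getting $C^{k}$ for $k\ge 2$ (equivalently, bounding $\Rm$ and $\nabla^j\Rm$) needs an additional idea that your sketch does not supply.
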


Another fact which we will need about manifolds of bounded geometry is the existence of uniform covers by normal coordinate charts and corresponding partitions of unity. A proof may be found in, e.g., \cite[Appendix A1.1]{shubin} (Shubin addresses the first statement about the existence of such covers actually to the paper \cite{gromov_curvature_diameter_betti_numbers} of Gromov).

\begin{lem}\label{lem:nice_coverings_partitions_of_unity}
Let $M$ be a manifold of bounded geometry.

For every $0 < \varepsilon < \tfrac{\injrad_M}{3}$ there exists a covering of $M$ by normal coordinate charts of radius $\varepsilon$ with the properties that the midpoints of the charts form a uniformly discrete set in $M$ and that the coordinate charts with double radius $2\varepsilon$ form a uniformly locally finite cover of $M$.

Furthermore, there is a subordinate partition of unity $1 = \sum_i \varphi_i$ with $\supp \varphi_i \subset B_{2\varepsilon}(x_i)$, such that in normal coordinates the functions $\varphi_i$ and all their derivatives are uniformly bounded (i.e., the bounds do not depend on $i$).
\end{lem}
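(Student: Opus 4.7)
My plan is to construct the covering via a maximal separation argument, bound the multiplicity of the $2\varepsilon$-cover by a volume comparison count, and then build the partition of unity by pulling back one fixed Euclidean bump function through all of the normal coordinate charts.

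First, apply Zorn's lemma (or transfinite exhaustion) to obtain a maximal subset $\{x_i\}_{i \in I} \subset M$ with $d(x_i,x_j) \ge \varepsilon$ for all $i \neq j$; by maximality every $x \in M$ lies within distance $\varepsilon$ of some $x_i$, so $\bigcup_i B_\varepsilon(x_i) = M$. Because $2\varepsilon < \injrad_M$, each $B_{2\varepsilon}(x_i)$ is a genuine normal coordinate chart, and the midpoints form a uniformly discrete set with separation constant $\varepsilon$.

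Next, bound the multiplicity of the $2\varepsilon$-covering. If $x \in B_{2\varepsilon}(x_{i_1}) \cap \cdots \cap B_{2\varepsilon}(x_{i_N})$, then all the $x_{i_k}$ lie in $B_{2\varepsilon}(x)$; since they are pairwise $\varepsilon$-separated, the balls $B_{\varepsilon/2}(x_{i_k})$ are pairwise disjoint and all contained in $B_{5\varepsilon/2}(x)$. Bounded geometry (two-sided curvature bounds together with $\injrad_M > 5\varepsilon/2$) yields a uniform lower bound $v_-$ on $\vol B_{\varepsilon/2}(x_{i_k})$ from the comparability of the Riemannian volume element to the Euclidean one in normal coordinates, and a uniform upper bound $V_+$ on $\vol B_{5\varepsilon/2}(x)$ via Bishop--Gromov. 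Hence $N \le V_+/v_-$, independent of $x$.

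For the partition of unity, fix once and for all a smooth $\chi \colon \IR^m \to [0,1]$ with $\chi \equiv 1$ on the Euclidean ball of radius $\varepsilon$ and $\supp \chi$ in the ball of radius $2\varepsilon$. Set $\psi_i := \chi \circ \exp_{x_i}^{-1}$ on $B_{2\varepsilon}(x_i)$ (extended by zero) and $\varphi_i := \psi_i/\sum_j \psi_j$. The denominator is $\ge 1$ everywhere because $\psi_i \equiv 1$ on $B_\varepsilon(x_i)$ and the $B_\varepsilon(x_i)$ cover $M$, and is $\le N$ by the multiplicity bound, so the quotient is well-defined and smooth with $\supp \varphi_i \subset B_{2\varepsilon}(x_i)$.

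The main obstacle is the uniform control of the coordinate derivatives of $\varphi_i$. In the normal chart at $x_i$ the function $\psi_i$ is literally $\chi$, hence has $C^k$-norms independent of $i$. For every other summand $\psi_j$ whose support meets $B_{2\varepsilon}(x_i)$, one expresses $\psi_j$ in the normal coordinates at $x_i$ as $\chi$ composed with the transition map $\exp_{x_j}^{-1} \circ \exp_{x_i}$ between the two overlapping charts; Lemma~\ref{lem:transition_functions_uniformly_bounded} supplies uniform $C^k$-bounds on these transition functions and all their derivatives, valid simultaneously for all pairs $(i,j)$. Since at most $N$ indices $j$ contribute at any point, the denominator is uniformly bounded in $C^k$, and combined with its pointwise lower bound $\ge 1$, the chain and quotient rules give the claimed uniform $C^k$-bounds on $\varphi_i$.
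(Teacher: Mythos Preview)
Your argument is correct and follows the standard route: a maximal $\varepsilon$-separated set for the centres, a volume-comparison count for the multiplicity bound, and a normalised pullback of a single Euclidean bump function for the partition of unity, with the uniform $C^k$-bounds coming from Lemma~\ref{lem:transition_functions_uniformly_bounded}. Note that the paper itself does not supply a proof of this lemma but only refers to Shubin's appendix (who in turn attributes the covering statement to Gromov); your proof is essentially the argument one finds there, so there is nothing to compare beyond observing that you have written out what the paper leaves to the citation.
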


If the manifold $M$ has bounded geometry, we have analogous equivalent local characterizations of bounded geometry for vector bundles as for manifolds. The equivalence of the first two bullet points in the next lemma is stated in, e.g., \cite[Proposition 2.5]{roe_index_1}. Concerning the third bullet point, the author could not find any citable reference in the literature (though Shubin uses in \cite{shubin} this as the actual definition).

\begin{lem}\label{lem:equiv_characterizations_bounded_geom_bundles}
Let $M$ be a manifold of bounded geometry and $E \to M$ a vector bundle. Then the following are equivalent:

\begin{itemize}
\item $E$ has bounded geometry,
\item the Christoffel symbols $\Gamma_{i \alpha}^\beta(y)$ of $E$ with respect to synchronous framings (considered as functions on the domain $B$ of normal coordinates at all points) are bounded, as are all their derivatives, and this bounds are independent of $x \in M$, $y \in \exp_x(B)$ and $i, \alpha, \beta$, and
\item the matrix transition functions between overlapping synchronous framings are uniformly bounded, as are all their derivatives (i.e., the bounds are the same for all transition functions).
\end{itemize}
\end{lem}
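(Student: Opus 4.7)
The strategy is to establish $1 \Leftrightarrow 2$ via the radial-gauge (Poincar\'{e}-lemma-type) identification of $\omega$ with an integral of $\Omega$ along a radial ray, and then to establish $2 \Leftrightarrow 3$ via the gauge-transformation formula relating two overlapping synchronous framings. Throughout, Lemma~\ref{lem:transition_functions_uniformly_bounded} will be used to ensure that the normal-coordinate transition functions of $M$ itself (and all their derivatives) are uniformly bounded, so that the passage between two neighbouring charts on $M$ does not destroy the uniformity of any bundle-theoretic estimate.

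For $1 \Leftrightarrow 2$: Parallel transport of a frame at $x$ along radial geodesics yields the synchronous framing, in which the connection $1$-form $\omega$ (whose components are the Christoffel symbols $\Gamma_{i\alpha}^\beta$) obeys the radial-gauge identity $y^i\omega_i(y) = 0$ on the normal ball around $x$. Combined with the structure equation $\Omega = d\omega + \omega\wedge\omega$, this produces the explicit Poincar\'{e}-lemma-type formula $\omega_k(y) = \int_0^1 s\, y^i\, \Omega_{ik}(sy)\,ds$. Differentiating under the integral sign and inductively converting covariant derivatives of $\Omega$ into partial derivatives via $\nabla\Omega - \partial\Omega = [\omega,\Omega]$ then transforms uniform bounds on $\nabla^k\Rm^E$ into uniform bounds on all partial derivatives of $\Gamma$. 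The converse direction $2 \Rightarrow 1$ is immediate from the same structure equation together with the same covariant-to-partial bookkeeping induction.

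For $2 \Leftrightarrow 3$: Let $g$ denote the matrix of the $y$-synchronous framing expressed in the $x$-synchronous framing on the overlap of two normal charts. Since both framings are orthonormal (or unitary), $g$ takes values in a compact matrix group, so $\|g\|$ is automatically bounded. The two connection forms are related by the gauge-transformation identity $dg = g\,\omega^{(y)} - \omega^{(x)}\,g$. Assuming condition 2, one transports $\omega^{(y)}$ into $x$-normal coordinates (uniformly by Lemma~\ref{lem:transition_functions_uniformly_bounded}) and bootstraps this identity to derive uniform bounds on every partial derivative of $g$. For the converse, evaluating the identity at the centre $y$ of the $y$-chart, where $\omega^{(y)}(y) = 0$ by construction of synchronous framings, gives $\omega^{(x)}(y) = -dg(y) g(y)^{-1}$, which yields a uniform pointwise bound on $\omega^{(x)}$ at every point of the $x$-chart (simply take $y$ to be any point of that chart, viewed as the centre of its own synchronous framing). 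Higher partial derivatives of $\omega^{(x)}$ are extracted by first differentiating the gauge-transformation identity in the position variable and then using both the Poincar\'{e}-type identity $\partial_j\omega^{(y)}_k(y) = \tfrac{1}{2}\Omega^{(y)}_{jk}(y)$ at the chart centre and the gauge covariance $\Omega^{(x)} = g\,\Omega^{(y)}\,g^{-1}$ to close an induction on the order of the derivative.

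The main technical obstacle lies in the direction $3 \Rightarrow 2$: the different normal-coordinate systems carrying $\omega^{(x)}$ and $\omega^{(y)}$ must be matched carefully, and the inductive argument for higher derivatives requires juggling the gauge-transformation identity, the Poincar\'{e}-type formula at chart centres, and the gauge covariance of the curvature. Each individual step is a routine differentiation or algebraic manipulation, but the bookkeeping becomes awkward, and Lemma~\ref{lem:transition_functions_uniformly_bounded} is indispensable in ensuring that the coordinate-dependent estimates remain uniform as the base point varies over $M$.
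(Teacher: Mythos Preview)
The paper does not actually prove this lemma. It merely states the result, citing \cite[Proposition 2.5]{roe_index_1} for the equivalence of the first two items and remarking that no citable reference could be found for the equivalence with the third (Shubin simply takes it as his definition). So there is nothing to compare against; your proposal is in effect supplying the missing argument.

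Your plan is sound and follows the standard route. The radial-gauge identity $y^i\omega_i=0$ for a synchronous framing and the resulting Poincar\'e-type formula $\omega_k(y)=\int_0^1 s\,y^i\Omega_{ik}(sy)\,ds$ are exactly the tools Roe uses for $1\Leftrightarrow 2$, and your bookkeeping of covariant-versus-partial derivatives via $\nabla\Omega-\partial\Omega=[\omega,\Omega]$ (together with the uniform control of the base-manifold Christoffel symbols in normal coordinates guaranteed by bounded geometry of $M$) is the correct way to close the induction. For $2\Leftrightarrow 3$, the gauge-transformation identity $dg=g\,\omega^{(y)}-\omega^{(x)}g$ and the observation that $g$ is valued in a compact matrix group (parallel transport along radial geodesics with a metric connection preserves orthonormality) are again the right ingredients; your trick for $3\Rightarrow 2$ of evaluating at the centre of a varying $y$-chart, where $\omega^{(y)}$ vanishes, together with the identity $\partial_j\omega^{(y)}_k(y)=\tfrac12\Omega^{(y)}_{jk}(y)$ and gauge covariance of the curvature, is a clean way to launch the induction on derivative order. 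The repeated appeal to Lemma~\ref{lem:transition_functions_uniformly_bounded} to ferry estimates uniformly between overlapping normal charts is essential and correctly identified. One small point worth making explicit when you write it out: the derivatives of $g$ in condition~3 should be understood in normal coordinates of one of the two charts (either choice is fine, by Lemma~\ref{lem:transition_functions_uniformly_bounded}), and you should be consistent about this when matching $dg$ against the coordinate components of $\omega^{(x)}$.
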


We will now give the definition of uniform $C^\infty$-spaces together with a local characterization on manifolds of bounded geometry. The interested reader is refered to, e.g., the papers \cite[Section 2]{roe_index_1} or \cite[Appendix A1.1]{shubin} of Roe and Shubin for more information regarding these uniform $C^\infty$-spaces.

\begin{defn}[$C^r$-bounded functions]
Let $f \in C^\infty(M)$. We say that $f$ is a \emph{$C_b^r$-function}, or equivalently that it is \emph{$C^r$-bounded}, if $\| \nabla^i f \|_\infty < C_i$ for all $0 \le i \le r$.
\end{defn}

If $M$ has bounded geometry, being $C^r$-bounded is equivalent to the statement that in every normal coordinate chart $|\partial^\alpha f(y)| < C_\alpha$ for every multiindex $\alpha$ with $|\alpha| \le r$ (where the constants $C_\alpha$ are independent of the chart).

Of course, the definition of $C^r$-boundedness and its equivalent characterization in normal coordinate charts for manifolds of bounded geometry make also sense for sections of vector bundles of bounded geometry (and so especially also for vector fields, differential forms and other tensor fields).

\begin{defn}[Uniform $C^\infty$-spaces]
\label{defn:uniform_frechet_spaces}
Let $E$ be a vector bundle of bounded geometry over $M$. We will denote the \emph{uniform $C^r$-space} of all $C^r$-bounded sections of $E$ by $C_b^r(E)$.

Furthermore, we define the \emph{uniform $C^\infty$-space $C_b^\infty(E)$}
\[C_b^\infty(E) := \bigcap_r C_b^r(E)\]
which is a \Frechet space.
\end{defn}

Now we get to Sobolev spaces on manifolds of bounded geometry. Much of the following material is from \cite[Appendix A1.1]{shubin} and \cite[Section 2]{roe_index_1}, where an interested reader can find more thorough discussions of this matters.

Let $s \in C^\infty_c(E)$ be a compactly supported, smooth section of some vector bundle $E \to M$ with metric and connection $\nabla$. For $k \in \IN_0$ and $p \in [1,\infty)$ we define the global $W^{k,p}$-Sobolev norm of $s$ by
\begin{equation}\label{eq:sobolev_norm}
\|s\|_{W^{k,p}}^p := \sum_{i=0}^k \int_M \|\nabla^i s(x)\|^p dx.
\end{equation}

\begin{defn}[Sobolev spaces $W^{k,p}(E)$]\label{defn:sobolev_spaces}
Let $E$ be a vector bundle which is equipped with a metric and a connection. The \emph{$W^{k,p}$-Sobolev space of $E$} is the completion of $C^\infty_c(E)$ in the norm $\|-\|_{W^{k,p}}$ and will be denoted by $W^{k,p}(E)$.
\end{defn}

If $E$ and $M^m$ both have bounded geometry than the Sobolev norm \eqref{eq:sobolev_norm} for $1 < p < \infty$ is equivalent to the local one given by
\begin{equation}\label{eq:sobolev_norm_local}
\|s\|_{W^{k,p}}^p \stackrel{\text{equiv}}= \sum_{i=1}^\infty \|\varphi_i s\|^p_{W^{k,p}(B_{2\varepsilon}(x_i))},
\end{equation}
where the balls $B_{2\varepsilon}(x_i)$ and the subordinate partition of unity $\varphi_i$ are as in Lemma \ref{lem:nice_coverings_partitions_of_unity}, we have chosen synchronous framings and $\|-\|_{W^{k,p}(B_{2\varepsilon}(x_i))}$ denotes the usual Sobolev norm on $B_{2\varepsilon}(x_i) \subset \IR^m$. This equivalence enables us to define the Sobolev norms for all $k \in \IR$, see Triebel \cite{triebel_2} and Gro{\ss}e--Schneider \cite{grosse_sobolev}. There are some issues in the case $p=1$, see the discussion by Triebel \cite[Section~2.2.3]{triebel_1}, \cite[Remark~4 on Page~13]{triebel_2}.

Assuming bounded geometry, the usual embedding theorems are true:

\begin{thm}[{\cite[Theorem 2.21]{aubin_nonlinear_problems}}]\label{thm:sobolev_embedding}
Let $E$ be a vector bundle of bounded geometry over a manifold $M^m$ of bounded geometry and without boundary.

Then we have for all values $(k-r)/m > 1/p$ continuous embeddings
\[W^{k,p}(E) \subset C^r_b(E).\]
\end{thm}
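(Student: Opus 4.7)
The plan is to reduce to the classical Euclidean Sobolev embedding theorem on a fixed ball $B_{2\varepsilon}(0) \subset \IR^m$, using the uniform cover from Lemma~\ref{lem:nice_coverings_partitions_of_unity} as the mechanism for localization, and invoking bounded geometry to ensure that every constant produced in the reduction is independent of the chart.

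First, I would fix a uniform cover $\{B_\varepsilon(x_i)\}$ of $M$ by normal coordinate balls (together with the double cover by $\{B_{2\varepsilon}(x_i)\}$ that is uniformly locally finite, say with multiplicity bound $N$) and a subordinate partition of unity $\{\varphi_i\}$ as in Lemma~\ref{lem:nice_coverings_partitions_of_unity}; I would also trivialize $E$ over each $B_{2\varepsilon}(x_i)$ by a synchronous framing. Lemma~\ref{lem:equiv_characterizations_bounded_geom_bundles} guarantees that the Christoffel symbols of $E$ in these framings, the components of the metric on $M$ in normal coordinates, and the derivatives of the $\varphi_i$ are all bounded by constants independent of $i$. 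Consequently, on each $B_{2\varepsilon}(x_i) \cong B_{2\varepsilon}(0) \subset \IR^m$ the intrinsic $W^{k,p}$-norm of $\varphi_i s$ is equivalent to the Euclidean $W^{k,p}$-norm of its coefficient functions, and the intrinsic $C^r$-norm is equivalent to the Euclidean one, with equivalence constants independent of $i$.

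Next, I would apply the classical Sobolev embedding on the fixed Euclidean ball $B_{2\varepsilon}(0)$: for $(k-r)/m > 1/p$ there is a constant $C = C(k,r,p,m,\varepsilon)$ with
\[
\|\varphi_i s\|_{C^r(B_{2\varepsilon}(x_i))} \le C\,\|\varphi_i s\|_{W^{k,p}(B_{2\varepsilon}(x_i))}
\]
for every $i$. Combining with the local characterization \eqref{eq:sobolev_norm_local} of the Sobolev norm yields
\[
\sup_i \|\varphi_i s\|_{W^{k,p}(B_{2\varepsilon}(x_i))} \le \|s\|_{W^{k,p}},
\]
so $\sup_i \|\varphi_i s\|_{C^r(B_{2\varepsilon}(x_i))} \le C\,\|s\|_{W^{k,p}}$.

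Finally, to promote this into a global $C^r_b$-bound, I would use uniform local finiteness: for any $x \in M$, at most $N$ indices $j$ satisfy $\varphi_j(x) \neq 0$, so
\[
\|\nabla^l s(x)\| = \Bigl\|\nabla^l\bigl(\sum_j \varphi_j s\bigr)(x)\Bigr\| \le N \sup_i \|\varphi_i s\|_{C^r}
\]
for $0 \le l \le r$. Passing through the local/intrinsic equivalence noted above gives $\|s\|_{C^r_b} \le C'\|s\|_{W^{k,p}}$, which is the claimed continuous embedding. The only genuinely delicate point — and hence the main obstacle — is verifying that every constant in the chain of local-to-global comparisons can actually be chosen uniformly in $i$; this is precisely the role played by bounded geometry via Lemmas~\ref{lem:transition_functions_uniformly_bounded}, \ref{lem:nice_coverings_partitions_of_unity}, and \ref{lem:equiv_characterizations_bounded_geom_bundles}.
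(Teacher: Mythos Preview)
The paper does not give its own proof of this theorem; it is simply quoted from Aubin \cite[Theorem 2.21]{aubin_nonlinear_problems}, so there is nothing to compare against. Your argument---localize via the uniform cover and partition of unity of Lemma~\ref{lem:nice_coverings_partitions_of_unity}, apply the Euclidean Sobolev embedding on a fixed ball, and use bounded geometry to make all constants chart-independent---is the standard route and is correct. Two minor caveats: the local description \eqref{eq:sobolev_norm_local} is asserted in the paper only for $1 < p < \infty$ (with the $p=1$ case flagged as problematic via Triebel), so your reduction should either restrict to that range or supply a separate argument for $p=1$; and the estimate you derive is a priori for $s \in C_c^\infty(E)$, so you should add the routine density step to extend it to all of $W^{k,p}(E)$.
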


We define the space
\begin{equation}
\label{eq:defn_W_infty}
W^{\infty,p}(E) := \bigcap_{k \in \IN_0} W^{k,p}(E)
\end{equation}
and equip it with the obvious \Frechet topology. The Sobolev Embedding Theorem tells us now that we have for all $p$ a continuous embedding
\[W^{\infty,p}(E) \hookrightarrow C^\infty_b(E).\]

For $p=2$ we will write $H^k(E)$ for $W^{k,2}(E)$. This are Hilbert spaces and for $k < 0$ the space $H^k(E)$ coincides with the dual of $H^{-k}(E)$, regarded as a space of distributional sections of $E$.

We will now investigate the Sobolev spaces $H^\infty(E)$ and $H^{-\infty}(E)$ of infinite orders. They are crucial since they will allow us to define smoothing operators and hence the important algebra $\IU(E)$ in the next section.

\begin{lem}
The topological dual of $H^\infty(E)$ is given by
\[H^{-\infty}(E) := \bigcup_{k \in \IN_0} H^{-k}(E).\]
\end{lem}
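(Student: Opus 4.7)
The plan is to identify $H^\infty(E)$ with the projective limit of the Hilbert spaces $H^k(E)$ (via the continuous inclusions $H^{k+1}(E) \hookrightarrow H^k(E)$) and then invoke the standard duality between projective and inductive limits. Concretely, I would prove the two inclusions of sets of continuous linear functionals separately.

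For the inclusion $H^{-\infty}(E) \subseteq (H^\infty(E))^\ast$: given $k \in \IN_0$ and $\phi \in H^{-k}(E) = (H^k(E))^\ast$, the composition of $\phi$ with the continuous embedding $H^\infty(E) \hookrightarrow H^k(E)$ (continuous by definition of the \Frechet topology on $H^\infty(E)$) is a continuous linear functional on $H^\infty(E)$. One should also check that this assignment is well-defined as a map $\bigcup_k H^{-k}(E) \to (H^\infty(E))^\ast$, i.e.\ that if $\phi \in H^{-k}(E)$ is also viewed as an element of $H^{-k'}(E)$ for $k' > k$ via the dual of $H^{k'}(E) \hookrightarrow H^k(E)$, the induced functional on $H^\infty(E)$ is the same; this is immediate from the factorisation of the embeddings.

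For the reverse inclusion $(H^\infty(E))^\ast \subseteq H^{-\infty}(E)$: let $\phi \colon H^\infty(E) \to \IC$ be continuous. Since $H^\infty(E)$ carries the \Frechet topology defined by the seminorms $\|\cdot\|_{H^k}$ for $k \in \IN_0$, continuity of $\phi$ yields some $k \in \IN_0$ and $C > 0$ such that $|\phi(s)| \le C \|s\|_{H^k}$ for all $s \in H^\infty(E)$. The key point now is that $H^\infty(E)$ is dense in $H^k(E)$: indeed $C^\infty_c(E) \subseteq H^\infty(E) \subseteq H^k(E)$ and $C^\infty_c(E)$ is dense in $H^k(E)$ by the very definition of the Sobolev space (Definition~\ref{defn:sobolev_spaces}). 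Hence $\phi$ extends uniquely to a continuous linear functional on $H^k(E)$, which is an element of $(H^k(E))^\ast = H^{-k}(E) \subseteq H^{-\infty}(E)$.

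The only slightly subtle point is the density statement, which however is immediate from the definition of Sobolev spaces as completions of $C^\infty_c(E)$; the rest is a routine application of the definition of the \Frechet topology and the already-stated self-duality $H^{-k}(E) = (H^k(E))^\ast$. No bounded geometry input is actually needed beyond what has already been established for the individual Sobolev spaces.
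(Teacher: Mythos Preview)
Your argument is correct and is exactly the standard one: the easy inclusion is immediate from the continuity of the inclusions $H^\infty(E)\hookrightarrow H^k(E)$, and for the reverse inclusion you correctly use that the seminorms $\|\cdot\|_{H^k}$ are increasing so that continuity of a linear functional gives a bound by a single $\|\cdot\|_{H^k}$, together with the density of $C_c^\infty(E)\subset H^\infty(E)$ in $H^k(E)$ (which indeed comes for free from Definition~\ref{defn:sobolev_spaces}). One small point you leave implicit is injectivity of the map $H^{-\infty}(E)\to (H^\infty(E))^\ast$, but this also follows immediately from the same density statement.

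As for comparison with the paper: the paper states this lemma without proof, so there is nothing to compare against. Your proof supplies precisely the routine argument the paper omits.
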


Let us equip the space $H^{-\infty}(E)$ with the locally convex topology defined as follows: the \Frechet space $H^\infty(E) = \operatorname{\underleftarrow{\lim}} H^k(E)$ is the projective limit of the Banach spaces $H^k(E)$, so using dualization we may put on the space $H^{-\infty}(E)$ the inductive limit topology denoted $\iota(H^{-\infty}(E), H^\infty(E))$:
\[H^{-\infty}_\iota(E) := \operatorname{\underrightarrow{\lim}} H^{-k}(E).\]
It enjoys the following universal property: a linear map $A \colon H^{-\infty}_\iota(E) \to F$ to a locally convex topological vector space $F$ is continuous if and only if $A|_{H^{-k}(E)}\colon H^{-k}(E) \to F$ is continuous for all $k \in \IN_0$.

Later we will need to know how the bounded\footnote{A subset $B \subset H^{-\infty}_\iota(E)$ is \emph{bounded} if and only if for all open neighbourhoods $U \subset H^{-\infty}_\iota(E)$ of $0$ there exists $\lambda > 0$ with $B \subset \lambda U$.} subsets of $H^{-\infty}_\iota(E)$ look like, which is the content of the following lemma. In its proof we will also deduce some nice properties of the spaces $H^\infty(E)$ and $H^{-\infty}_\iota(E)$.

\begin{lem}\label{lem:regular_inductive_limit}
The space $H^{-\infty}_\iota(E) := \operatorname{\underrightarrow{\lim}} H^{-k}(E)$ is a \emph{regular inductive limit}, i.e., for every bounded subset $B \subset H^{-\infty}_\iota(E)$ exists some $k \in \IN_0$ such that $B$ is already contained in $H^{-k}(E)$ and bounded there.\footnote{Note that the converse does always hold for inductive limits, i.e., if $B \subset H^{-k}(E)$ is bounded, then it is also bounded in $H^{-\infty}_\iota(E)$.}
\end{lem}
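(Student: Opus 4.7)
The plan is to exploit the duality $H^{-\infty}_\iota(E)=(H^\infty(E))^\ast$ established in the preceding lemma, combined with the Banach--Steinhaus theorem applied to the \Frechet space $H^\infty(E)$. Let $B\subset H^{-\infty}_\iota(E)$ be bounded. The first step is to interpret elements of $B$ as continuous linear functionals on $H^\infty(E)$ and show that $B$ is pointwise bounded in this guise.

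Fix $\phi\in H^\infty(E)\subset H^k(E)$ for every $k$. The map $b\mapsto\langle b,\phi\rangle$ restricted to each $H^{-k}(E)$ is the evaluation against $\phi\in H^k(E)$, hence continuous with norm at most $\|\phi\|_{H^k}$. By the universal property of the inductive limit topology (recorded just before the lemma), the assignment $b\mapsto\langle b,\phi\rangle$ is a continuous linear functional on $H^{-\infty}_\iota(E)$. Boundedness of $B$ therefore forces $\{\langle b,\phi\rangle:b\in B\}$ to be a bounded set of scalars for every $\phi\in H^\infty(E)$.

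Now I would invoke the Banach--Steinhaus theorem. Since $H^\infty(E)$ is \Frechet and hence barrelled, pointwise boundedness upgrades to equicontinuity: there exists a continuous seminorm $p$ on $H^\infty(E)$ with $|\langle b,\phi\rangle|\le p(\phi)$ for all $b\in B$ and all $\phi\in H^\infty(E)$. The topology on the projective limit $H^\infty(E)=\operatorname{\underleftarrow{\lim}}H^k(E)$ is defined by the seminorms $\|\cdot\|_{H^k}$, so $p(\phi)\le C\|\phi\|_{H^k}$ for some $k\in\IN_0$ and some $C>0$. Combining, $|\langle b,\phi\rangle|\le C\|\phi\|_{H^k}$ uniformly in $b\in B$ and $\phi\in H^\infty(E)$.

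To conclude, I would use density of $H^\infty(E)$ in $H^k(E)$ (both contain $C^\infty_c(E)$ as a dense subspace) to extend each $b\in B$ uniquely to a continuous linear functional on $H^k(E)$ of norm at most $C$; this realises $b$ as an element of $H^{-k}(E)$ with $\|b\|_{H^{-k}}\le C$, so $B\subset H^{-k}(E)$ and is bounded there. The main obstacle is the reinterpretation of a set bounded in the inductive limit as an equicontinuous family of functionals on $H^\infty(E)$; once this pivot is in place, everything reduces to the standard fact that a continuous seminorm on a countable projective limit of Banach spaces factors through one of the defining norms.
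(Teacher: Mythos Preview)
Your proof is correct and takes a genuinely different route from the paper's. You give a direct, elementary argument: bounded in the inductive limit $\Rightarrow$ pointwise bounded on $H^\infty(E)$ $\Rightarrow$ equicontinuous by Banach--Steinhaus (since Fr\'echet spaces are barrelled) $\Rightarrow$ dominated by a single $\|\cdot\|_{H^k}$ $\Rightarrow$ bounded in $H^{-k}(E)$ after extension by density. The paper instead invokes abstract structure theory: it quotes a corollary of Grothendieck's Factorization Theorem that regularity of an inductive limit of Fr\'echet spaces is equivalent to local completeness, and then shows the stronger fact that $H^{-\infty}_\iota(E)$ is complete by arguing that $H^\infty(E)$ is totally reflexive (being a projective limit of Hilbert spaces), hence distinguished, so the inductive and strong topologies on the dual coincide, and strong duals of Fr\'echet spaces are complete. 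Your approach has the advantage of being self-contained and using only the uniform boundedness principle; the paper's approach, while relying on heavier machinery, yields as a by-product the completeness of $H^{-\infty}_\iota(E)$ and the identification of the inductive and strong dual topologies, which are of independent interest. One small point worth making explicit in your write-up: after extending $b|_{H^\infty(E)}$ to $\tilde b\in H^{-k}(E)$, you should note that $\tilde b$ and the original $b$ (which a priori lives only in some $H^{-k'}(E)$ with $k'$ possibly larger than $k$) agree on the dense subspace $H^\infty(E)\subset H^{k'}(E)$ and hence coincide in $H^{-k'}(E)$, so that $b$ itself lies in $H^{-k}(E)$.
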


\begin{proof}
Since all $H^{-k}(E)$ are \Frechet spaces, we may apply the following corollary of Grothendieck's Factorization Theorem: the inductive limit $H^{-\infty}_\iota(E)$ is regular if and only if it is locally complete (see, e.g., \cite[Lemma 7.3.3(i)]{perezcarreras_bonet}). To avoid introducing more burdensome vocabulary, we won't define the notion of local completeness here since we will show something stronger: $H^{-\infty}_\iota(E)$ is actually complete\footnote{That is to say, every Cauchy net converges. In locally convex spaces, being Cauchy and to converge is meant with respect to each of the seminorms simultaneously.}.

From \cite[Sections 3.(a \& b)]{bierstedt_bonet} we conclude the following: since each $H^k(E)$ is a Hilbert space, the \Frechet space $H^\infty(E)$ is the projective limit of reflexive Banach spaces and therefore totally reflexive\footnote{That is to say, every quotient of it is reflexive, i.e., the canonical embeddings of the quotients into their strong biduals are isomorphisms of topological vector spaces.}. It follows that $H^\infty(E)$ is distinguished, which can be characterized by $H^{-\infty}_\beta(E) = H^{-\infty}_\iota(E)$, where $\beta(H^{-\infty}(E), H^\infty(E))$ is the strong topology on $H^{-\infty}(E)$. Now without defining the strong topology we just note that strong dual spaces of \Frechet space are always complete.
\end{proof}

\section{Quasilocal smoothing operators}\label{sec:quasiloc}

We will discuss in this section the definition and basic properties of smoothing operators on manifolds of bounded geometry and we will introduce the notion of quasilocal operators. The quasilocal smoothing operators will be the $(-\infty)$-part of our uniform pseudodifferential operators that we are going to define in the next section.

\begin{defn}[Smoothing operators]
Let $M$ be a manifold of bounded geometry and $E$ and $F$ two vector bundles of bounded geometry over $M$. We will call a continuous linear operator $A \colon H^{-\infty}_\iota(E) \to H^\infty(F)$ a \emph{smoothing operator}.
\end{defn}

\begin{lem}\label{lem:smoothing_operator_iff_bounded}
A linear operator $A \colon H^{-\infty}_\iota(E) \to H^\infty(F)$ is continuous if and only if it is bounded as an operator $H^{-k}(E) \to H^l(F)$ for all $k, l \in \IN_0$.
\end{lem}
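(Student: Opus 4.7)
The plan is to combine the universal properties of the two limit constructions: $H^\infty(F) = \operatorname{\underleftarrow{\lim}} H^l(F)$ is a projective limit of Hilbert spaces, and $H^{-\infty}_\iota(E) = \operatorname{\underrightarrow{\lim}} H^{-k}(E)$ is an inductive limit whose universal property has just been recalled in the paragraph preceding the lemma. No substantive analysis is needed; once the two universal properties are chained together, the statement reduces to the fact that a continuous linear map between Hilbert spaces is the same thing as a bounded one.

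For the ``only if'' direction, assume $A \colon H^{-\infty}_\iota(E) \to H^\infty(F)$ is continuous. For each $k, l \in \IN_0$ the canonical map $\iota_k \colon H^{-k}(E) \hookrightarrow H^{-\infty}_\iota(E)$ is continuous by definition of the inductive limit topology, and the projection $\pi_l \colon H^\infty(F) \to H^l(F)$ is continuous by definition of the projective limit topology. Hence the composition $\pi_l \circ A \circ \iota_k \colon H^{-k}(E) \to H^l(F)$ is a continuous linear map between Hilbert spaces, which is the same as being bounded.

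For the ``if'' direction, suppose that for every $k, l \in \IN_0$ the restriction $A|_{H^{-k}(E)}$, viewed as a map into $H^l(F)$, is bounded. Fix $k$ for the moment. Since $H^\infty(F)$ carries the projective limit topology, the linear map $A|_{H^{-k}(E)} \colon H^{-k}(E) \to H^\infty(F)$ is continuous if and only if each composition $\pi_l \circ A|_{H^{-k}(E)} \colon H^{-k}(E) \to H^l(F)$ is continuous; this is exactly the boundedness hypothesis. Thus $A|_{H^{-k}(E)} \colon H^{-k}(E) \to H^\infty(F)$ is continuous for every $k \in \IN_0$, and the universal property of the inductive limit $H^{-\infty}_\iota(E)$ (recalled immediately before this lemma) then yields continuity of $A$ on all of $H^{-\infty}_\iota(E)$.

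The only point worth a second thought is that in the ``if'' direction we are using boundedness of the individual maps $H^{-k}(E) \to H^l(F)$ rather than mere continuity, but this is harmless: for linear maps between normed spaces these two notions coincide, so no additional work is required and there is no real obstacle.
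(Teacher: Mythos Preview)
Your argument is correct. The paper does not supply a proof of this lemma at all; it is stated without proof, presumably because the intended argument is exactly the routine one via the universal properties of the projective and inductive limits that you have written out.
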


Let us denote by $\IB(H^{-\infty}_\iota(E), H^\infty(E))$ the algebra of all smoothing operators from $E$ to itself. Due to the above lemma we may equip it with the countable family of norms $(\|-\|_{-k,l})_{k,l \in \IN_0}$ so that it becomes a \Frechet space\footnote{That is to say, a topological vector space whose topology is Hausdorff and induced by a countable family of seminorms such that it is complete with respect to this family of seminorms.}.

Now let us get to the main property of smoothing operators that we will need, namely that they can be represented as integral operators with a uniformly bounded smooth kernel. Let $A \colon H^{-\infty}_\iota(E) \to H^\infty(F)$ be given. Then we get by the Sobolev Embedding Theorem \ref{thm:sobolev_embedding} a continuous operator $A \colon H^{-\infty}_\iota(E) \to C_b^\infty(F)$ and so may conclude by the Schwartz Kernel Theorem for regularizing operators\footnote{Note that the usual wording of the Schwartz Kernel Theorem for regularizing operators requires the domain $H^{-\infty}(E)$ to be equipped with the weak-$^\ast$ topology $\sigma(H^{-\infty}(E), H^{\infty}(F))$ and $A$ to be continuous against it. But one actually only needs the domain to be equipped with the inductive limit topology. To see this, one can look at the proof of the Schwartz Kernel Theorem for regularizing kernels as in, e.g., \cite[Theorem 3.18]{ganglberger}.} that $A$ has a smooth integral kernel $k_A \in C^\infty(F \boxtimes E^\ast)$, which is uniformly bounded as are all its derivatives, because of the bounded geometry of $M$ and the vector bundles $E$ and $F$, i.e., $k_A \in C_b^\infty(F \boxtimes E^\ast)$.

From the proof of the Schwartz Kernel Theorem for regularizing operators we also see that the assignment of the kernel to the operator is continuous against the \Frechet topology on $\IB(H^{-\infty}_\iota(E), H^\infty(F))$. Furthermore, because of Lemma \ref{lem:regular_inductive_limit} this topology coincides with the topology of bounded convergence\footnote{A basis of neighbourhoods of zero for the topology of bounded convergence is given by the subsets $M(B, U) \subset \IB(H^{-\infty}_\iota(E), H^\infty(F))$ of all operators $T$ with $T(B) \subset U$, where $B$ ranges over all bounded subsets of $H^{-\infty}_\iota(E)$ and $U$ over a basis of neighbourhoods of zero in $H^\infty(F)$.} on $\IB(H^{-\infty}_\iota(E), H^\infty(F))$. We need this equality of topologies in order for the next proposition (which is a standard result in distribution theory) to be equivalent to the version stated in \cite[Proposition 2.9]{roe_index_1}.

\begin{prop}\label{prop:smoothing_op_kernel}
Let $A\colon H^{-\infty}_\iota(E) \to H^\infty(F)$ be a smoothing operator. Then $A$ can be written as an integral operator with kernel $k_A \in C_b^\infty(F \boxtimes E^\ast)$. Furthermore, the map
\[\IB(H^{-\infty}_\iota(E), H^\infty(F)) \to C_b^\infty(F \boxtimes E^\ast),\]
which associates a smoothing operator its kernel, is continuous.
\end{prop}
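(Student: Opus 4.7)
The plan is to deduce the existence of the kernel from the Schwartz kernel theorem for regularizing operators, and then to upgrade mere smoothness to uniform smoothness using the bounded geometry hypotheses.

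First, I would compose $A$ with the continuous Sobolev embedding $H^\infty(F) \hookrightarrow C_b^\infty(F)$ from Theorem~\ref{thm:sobolev_embedding}, so that $A$ may be viewed as a continuous operator $H^{-\infty}_\iota(E) \to C_b^\infty(F)$. The version of the Schwartz kernel theorem for regularizing operators with inductive-limit source topology (cited in the footnote as \cite[Theorem~3.18]{ganglberger}) then produces a smooth integral kernel $k_A \in C^\infty(F \boxtimes E^\ast)$ representing $A$.

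Next, the upgrade from $C^\infty$ to $C_b^\infty$ is where bounded geometry enters decisively. Using the uniform cover by normal coordinate charts and the subordinate partition of unity from Lemma~\ref{lem:nice_coverings_partitions_of_unity}, synchronous framings as in Lemma~\ref{lem:equiv_characterizations_bounded_geom_bundles}, and the local characterisation~\eqref{eq:sobolev_norm_local} of the Sobolev norms, the point-evaluation-and-jet functionals on $C_b^\infty(F)$ have operator norms that are bounded uniformly in the base point $y$; the analogous control in the second variable follows from the dual pairing between $H^\infty(E)$ and $H^{-\infty}(E)$. Combining these with the continuity of each $A_k \colon H^{-k}(E) \to C_b^\infty(F)$ (which follows from the universal property of the inductive limit together with Lemma~\ref{lem:smoothing_operator_iff_bounded} and Theorem~\ref{thm:sobolev_embedding}) yields estimates of the form
\[
\|\nabla_y^\alpha \nabla_x^\beta k_A\|_\infty \le C_{\alpha,\beta}\cdot \|A\|_{-k(\alpha,\beta),\, l(\alpha,\beta)}
\]
for suitable $k(\alpha,\beta), l(\alpha,\beta) \in \IN_0$. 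These estimates simultaneously establish $k_A \in C_b^\infty(F \boxtimes E^\ast)$ and the continuity of the kernel assignment between the two \Frechet topologies, using that by Lemma~\ref{lem:regular_inductive_limit} and the surrounding discussion the \Frechet topology on $\IB(H^{-\infty}_\iota(E), H^\infty(F))$ defined by the seminorms $\|-\|_{-k,l}$ coincides with the topology of bounded convergence.

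The main obstacle is precisely the uniformity of these constants across points of $M$: one needs Sobolev embedding constants and norms of point-evaluation functionals on $H^l(F)$ that do not depend on the base point $y$. This is exactly what bounded geometry of $M$, $E$, and $F$ supplies, via the uniform bounds on transition functions of overlapping normal charts (Lemma~\ref{lem:transition_functions_uniformly_bounded}) and on the matrix transition functions between synchronous framings (Lemma~\ref{lem:equiv_characterizations_bounded_geom_bundles}); without them one only recovers a merely smooth, not uniformly bounded, kernel.
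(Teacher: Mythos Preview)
Your proposal is correct and follows essentially the same route as the paper: compose with the Sobolev embedding $H^\infty(F)\hookrightarrow C_b^\infty(F)$, invoke the Schwartz kernel theorem for regularizing operators with the inductive-limit source topology, and then use bounded geometry (uniform normal charts, synchronous framings) to upgrade the kernel to $C_b^\infty$ with estimates that simultaneously yield continuity of $A\mapsto k_A$; the identification of the \Frechet topology with the topology of bounded convergence via Lemma~\ref{lem:regular_inductive_limit} is exactly how the paper phrases the continuity statement. Your write-up is in fact a bit more explicit than the paper's discussion about the mechanism producing the uniform bounds, but the strategy is the same.
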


Let $L \subset M$ be any subset. We will denote by $\|-\|_{H^r, L}$ the seminorm on the Sobolev space $H^r(E)$ given by
\[\|u\|_{H^r, L} := \inf \{ \|u^\prime\|_{H^r} \ | \ u^\prime \in H^r(E), u^\prime = u \text{ on a neighbourhood of }L\}.\]

\begin{defn}[Quasilocal operators, {\cite[Section 5]{roe_index_1}}]\label{defn:quasiloc_ops}
We will call a continuous operator $A \colon H^r(E) \to H^s(F)$ \emph{quasilocal}, if there is a function $\mu\colon \IR_{> 0} \to \IR_{\ge 0}$ with $\mu(R) \to 0$ for $R \to \infty$ and such that for all $L \subset M$ and all $u \in H^r(E)$ with $\supp u \subset L$ we have
\[\|A u\|_{H^s, M - B_R(L)} \le \mu(R) \cdot \|u\|_{H^r}.\]

Such a function $\mu$ will be called a \emph{dominating function for $A$}.

We will say that an operator $A\colon C_c^\infty(E) \to C^\infty(F)$ is a \emph{quasilocal operator of order $k$}\footnote{Roe calls such operators ``\emph{uniform} operators of order $k$'' in \cite[Definition 5.3]{roe_index_1}. But since the word ``uniform'' will have another meaning for us (see, e.g., the definition of uniform $K$-homology), we changed the name.} for some $k \in \IZ$, if $A$ has a continuous extension to a quasilocal operator $H^s(E) \to H^{s-k}(F)$ for all $s \in \IZ$.

A smoothing operator $A\colon H^{-\infty}_\iota(E) \to H^\infty(F)$ will be called \emph{quasilocal}, if $A$ is quasilocal as an operator $H^{-k}(E) \to H^l(F)$ for all $k,l \in \IN_0$ (from which it follows that $A$ is also quasilocal for all $k,l \in \IZ$).
\end{defn}

If we regard a smoothing operator $A$ as an operator $L^2(E) \to L^2(F)$, we get a uniquely defined adjoint $A^\ast\colon L^2(F) \to L^2(E)$. Its integral kernel will be given by
\[k_{A^\ast}(x,y) := k_A(y,x)^\ast \in C_b^\infty(E \boxtimes F^\ast),\]
where $k_A(y,x)^\ast \in F_y^\ast \otimes E_x$ is the dual element of $k_A(y,x) \in F_y \otimes E^\ast_x$.

\begin{defn}[cf.~{\cite[Definition 5.3]{roe_index_1}}]\label{defn:quasiloc_smoothing}
We will denote the set of all quasilocal smoothing operators $A\colon H^{-\infty}_\iota(E) \to H^\infty(F)$ with the property that their adjoint operator $A^\ast$ is also a quasilocal smoothing operator $H^{-\infty}_\iota(F) \to H^\infty(E)$ by $\IU(E,F)$.

If $E=F$, we will just write $\IU(E)$.
\end{defn}

\begin{rem}
Roe defines in \cite[Definition 5.3]{roe_index_1} the algebra $\mathcal{U}_{-\infty}(E)$ instead of $\IU(E)$, i.e., he does not demand that the adjoint operator is also quasilocal smoothing. The reason why we do this is that we want adjoints of uniform pseudodifferential operators to be again uniform pseudodifferential operators (and the algebra $\IU(E)$ is used in the definition of uniform pseudodifferential operators).
\end{rem}

\section{Uniform pseudodifferential operators}

Let $M^m$ be an $m$-dimensional manifold of bounded geometry and let $E$ and $F$ be two vector bundles of bounded geometry over $M$. Now we will get to the definition of uniform pseudodifferential operators acting on sections of vector bundles of bounded geometry over manifolds of bounded geometry.

Our definition is almost the same as the ones of Shubin \cite{shubin} and Kordyukov \cite{kordyukov}. The difference is that our definition is slightly more general, because we do not restrict to finite propagation operators (since we allow the term $P_{-\infty}$ in the definition below). The reason why we have to do this is because of our results in Section~\ref{sec:functions_of_PDOs}: we only get quasi-local operators and not necessarily operators of finite propagation (in fact, the main technical result is Lemma~\ref{lem:exp(itP)_quasilocal} stating that the wave operators are quasi-local), and therefore we would leave our calculus of pseudodifferential operators if we would insist of them having finite propagation. Most of the results stated in this section and in Section~\ref{secio23ed} are basically already known, resp., it is straight-forward to generalize the corresponding statements in the finite propagation case to our quasi-local case. We nevertheless include a discussion of these statements in order for our exposition here to be self-contained.

\begin{defn}\label{defn:pseudodiff_operator}
An operator $P\colon C_c^\infty(E) \to C^\infty(F)$ is a \emph{uniform pseudodifferential operator of order $k \in \IZ$}, if with respect to a uniformly locally finite covering $\{B_{2\varepsilon}(x_i)\}$ of $M$ with normal coordinate balls and corresponding subordinate partition of unity $\{\varphi_i\}$ as in Lemma \ref{lem:nice_coverings_partitions_of_unity} we can write
\begin{equation}
\label{eq:defn_pseudodiff_operator_sum}
P = P_{-\infty} + \sum_i P_i
\end{equation}
satisfying the following conditions:
\begin{itemize}
\item $P_{-\infty} \in \IU(E,F)$, i.e., it is a quasilocal smoothing operator,
\item for all $i$ the operator $P_i$ is with respect to synchronous framings of $E$ and $F$ in the ball $B_{2\varepsilon}(x_i)$ a matrix of pseudodifferential operators on $\IR^m$ of order $k$ with support\footnote{An operator $P$ is \emph{supported in a subset $K$}, if $\supp Pu \subset K$ for all $u$ in the domain of $P$ and if $Pu = 0$ whenever we have $\supp u \cap K = \emptyset$.} in $B_{2\varepsilon}(0) \subset \IR^m$, and
\item the constants $C_i^{\alpha \beta}$ appearing in the bounds
\[\|D_x^\alpha D_\xi^\beta p_i(x,\xi)\| \le C^{\alpha \beta}_i (1 + |\xi|)^{k - |\beta|}\]
of the symbols of the operators $P_i$ can be chosen to not depend on $i$, i.e., there are $C^{\alpha \beta} < \infty$ such that
\begin{equation}
\label{eq:uniformity_defn_PDOs}
C^{\alpha \beta}_i \le C^{\alpha \beta}
\end{equation}
for all multi-indices $\alpha, \beta$ and all $i$. We will call this the \emph{uniformity condition} for pseudodifferential operators on manifolds of bounded geometry.
\end{itemize}
We denote the set of all such operators by $\UPsiDO^k(E,F)$.
\end{defn}

\begin{rem}
From Lemma \ref{lem:transition_functions_uniformly_bounded} and Lemma \ref{lem:equiv_characterizations_bounded_geom_bundles} together with \cite[Theorem III.§3.12]{lawson_michelsohn} (and its proof which gives the concrete formula how the symbol of a pseudodifferential operator transforms under a coordinate change) we conclude that the above definition of uniform pseudodifferential operators on manifolds of bounded geometry does neither depend on the chosen uniformly locally finite covering of $M$ by normal coordinate balls, nor on the subordinate partition of unity with uniformly bounded derivatives, nor on the synchronous framings of $E$ and $F$.
\end{rem}

\begin{rem}
We could also have given an equivalent definition of uniform pseudodifferential operators, which does not need a choice of covering: firstly, for each $\varepsilon > 0$ there should be a quasilocal smoothing operator $P_\varepsilon$ such that for any $\phi, \psi \in C_c^\infty(M)$ with $d(\supp \phi, \supp \psi) > \varepsilon$ and any $v \in C_c^\infty(E)$ we have $\psi \cdot P(\phi \cdot v) = \psi \cdot P_\varepsilon(\phi \cdot v)$. This encodes that the integral kernel of a uniform pseudodifferential operator $P$ is off-diagonally a quasilocal smoothing operator.

Secondly, to encode the behaviour of the integral kernel of $P$ at its diagonal, we must demand that in any normal coordinate chart of radius less than the injectivity radius of the manifold with any choice of cut-off function for this coordinate chart and with any choice of synchronous framings of $E$ and $F$ in this coordinate chart the operator $P$ looks like a pseudodifferential operator on $\IR^m$, and for the collection of all of these local representatives of $P$ computed with respect to cut-off functions that have common bounds on their derivatives we have the Uniformity Condition \eqref{eq:uniformity_defn_PDOs}.
\end{rem}

\begin{rem}
We consider only operators that would correspond to H\"ormander's class $S_{1, 0}^k(\Omega)$, if we consider open subsets $\Omega$ of $\IR^m$ instead of an $m$-dimensional manifold $M$, i.e., we do not investigate operators corresponding to the more general classes $S_{\rho, \delta}^k(\Omega)$. The paper \cite[Definition 2.1]{hormander_ess_norm} is the one where H\"ormander introduced these classes.
\end{rem}

Recall that in the case of compact manifolds a pseudodifferential operator $P$ of order~$k$ has an extension to a continuous operator $H^s(E) \to H^{s-k}(F)$ for all $s \in \IZ$ (see, e.g., \cite[Theorem III.§3.17(i)]{lawson_michelsohn}). Due to the uniform local finiteness of the sum in \eqref{eq:defn_pseudodiff_operator_sum} and due to the Uniformity Condition \eqref{eq:uniformity_defn_PDOs}, this result does also hold in our case of a manifold of bounded geometry.

\begin{prop}\label{prop:pseudodiff_extension_sobolev}
Let $P \in \UPsiDO^k(E,F)$. Then $P$ has for all $s \in \IZ$ an extension to a continuous operator $P\colon H^s(E) \to H^{s-k}(F)$.
\end{prop}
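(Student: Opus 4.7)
The strategy is to treat the quasilocal smoothing part $P_{-\infty}$ and the local pieces $P_i$ separately, using the local characterization \eqref{eq:sobolev_norm_local} of the Sobolev norm to piece the $P_i$ together.

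First, I would dispose of the term $P_{-\infty}$. Since $P_{-\infty}\in \IU(E,F)$ is a smoothing operator, Lemma~\ref{lem:smoothing_operator_iff_bounded} shows directly that it extends to a bounded operator $H^s(E)\to H^{s-k}(F)$ for every $s\in\IZ$; quasi-locality is not even needed here for mere continuity, only for later purposes.

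The main work is to show that $Q:=\sum_i P_i$ extends continuously $H^s(E)\to H^{s-k}(F)$. I would argue by equivalence of norms, using \eqref{eq:sobolev_norm_local}: pick the cover $\{B_{2\varepsilon}(x_i)\}$ and partition of unity $\{\varphi_i\}$ of Lemma~\ref{lem:nice_coverings_partitions_of_unity} and identify each ball with $B_{2\varepsilon}(0)\subset\IR^m$ via normal coordinates with synchronous framings of $E$ and $F$. By the standard Euclidean $L^2$-boundedness theorem for H\"ormander-class $S^k_{1,0}$ pseudodifferential operators, each $P_i$ extends to a bounded operator $H^s(\IR^m)\to H^{s-k}(\IR^m)$, and the operator norm is bounded by a fixed polynomial in finitely many of the symbol constants $C_i^{\alpha\beta}$; the uniformity condition \eqref{eq:uniformity_defn_PDOs} then yields a single constant $K=K(s,k)$ with $\|P_i u\|_{H^{s-k}}\le K\|u\|_{H^s}$ for all $i$. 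Because $P_i$ is supported in $B_{2\varepsilon}(x_i)$, only $u|_{B_{2\varepsilon}(x_i)}$ matters, so in fact $\|P_i u\|_{H^{s-k}}\le K\|u\|_{H^s,B_{2\varepsilon}(x_i)}$ with a controlled constant.

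To assemble the estimates, I would use \eqref{eq:sobolev_norm_local} for $Qu$ in the target space:
\[\|Q u\|_{H^{s-k}}^2 \;\stackrel{\mathrm{equiv}}{\asymp}\; \sum_j \bigl\|\varphi_j\, Qu\bigr\|_{H^{s-k}(B_{2\varepsilon}(x_j))}^2.\]
Since $\supp P_i u\subset B_{2\varepsilon}(x_i)$, only those indices $i$ with $B_{2\varepsilon}(x_i)\cap B_{2\varepsilon}(x_j)\neq\emptyset$ contribute to the $j$-th summand, and by uniform local finiteness the number $N$ of such $i$ is bounded independently of $j$. A Cauchy--Schwarz estimate then gives
\[\bigl\|\varphi_j Qu\bigr\|_{H^{s-k}(B_{2\varepsilon}(x_j))}^2\;\le\; N\sum_{i\sim j}\|P_i u\|_{H^{s-k}(B_{2\varepsilon}(x_j))}^2\;\le\; N K^2\sum_{i\sim j}\|u\|_{H^s,B_{2\varepsilon}(x_i)}^2.\]
Summing over $j$ and using uniform local finiteness a second time to bound the multiplicity with which each $i$ appears, one arrives at $\|Qu\|_{H^{s-k}}\le C\|u\|_{H^s}$ via \eqref{eq:sobolev_norm_local} applied this time to $u$. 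Finally, for negative $s$ the claim follows either by the same argument using that the local Euclidean boundedness theorem holds for all $s\in\IZ$, or, if one prefers, by duality once the formal adjoint of a uniform pseudodifferential operator is recognised as a uniform pseudodifferential operator of the same order.

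The main obstacle is purely bookkeeping: verifying that the Euclidean bounds on $\|P_i\|_{H^s\to H^{s-k}}$ depend only on finitely many $C_i^{\alpha\beta}$ (so that \eqref{eq:uniformity_defn_PDOs} can be invoked) and then threading the uniform local finiteness carefully through the two applications of the equivalent Sobolev norm \eqref{eq:sobolev_norm_local}. No new analytical input is needed beyond the Euclidean boundedness theorem, bounded geometry and the properties already collected in Section~2.
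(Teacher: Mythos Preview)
Your proposal is correct and follows exactly the approach the paper indicates: the paper does not give a detailed proof but only remarks that the compact-case result \cite[Theorem III.\S3.17(i)]{lawson_michelsohn} carries over ``due to the uniform local finiteness of the sum in \eqref{eq:defn_pseudodiff_operator_sum} and due to the Uniformity Condition \eqref{eq:uniformity_defn_PDOs}'', and your plan is precisely the natural way to unpack that sentence. Your observation that the Euclidean operator norm of each $P_i$ depends only on finitely many of the symbol constants $C_i^{\alpha\beta}$ is also what the paper records in Remark~\ref{rem:bound_operator_norm_PDO} (there attributed to a close reading of \cite[Proposition III.\S3.2]{lawson_michelsohn}).
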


\begin{rem}\label{rem:bound_operator_norm_PDO}
Later we will need the following fact: we can bound the operator norm of $P\colon H^s(E) \to H^{s-k}(F)$ from above by the maximum of the constants $C^{\alpha 0}$ with $|\alpha| \le K_s$ from the Uniformity Condition \eqref{eq:uniformity_defn_PDOs} for $P$ multiplied with a constant $C_s$, where $K_s \in \IN_0$ and $C_s$ only depend on $s \in \IZ$ and the dimension of the manifold $M$. This can be seen by carefully examining the proof of \cite[Proposition III.§3.2]{lawson_michelsohn} which is the above proposition for the compact case.\footnote{To be utterly concrete, we have to choose normal coordinate charts and a subordinate partition of unity as in Lemma \ref{lem:nice_coverings_partitions_of_unity} and also synchronous framings for $E$ and $F$ and then use Formula \eqref{eq:sobolev_norm_local} which gives Sobolev norms that can be computed locally and that are equivalent to the global norms \eqref{eq:sobolev_norm}.}
\end{rem}

Let us define
\[\UPsiDO^{-\infty}(E,F) := \bigcap_k \UPsiDO^k(E,F).\]

We will show $\UPsiDO^{-\infty}(E,F) = \IU(E,F)$: from the previous Proposition \ref{prop:pseudodiff_extension_sobolev} we conclude that $P \in \UPsiDO^{-\infty}(E,F)$ is a smoothing operator (using Lemma \ref{lem:smoothing_operator_iff_bounded}). Since we can write $P = P_{-\infty} + \sum_i P_i$, where $P_{-\infty} \in \IU(E,F)$ and the $P_i$ are supported in balls with uniformly bounded radii, the operator $\sum_i P_i$ is of finite propagation. So $P$ is the sum of a quasilocal smoothing operator $P_{-\infty}$ and a smoothing operator $\sum_i P_i$ of finite propagation, and therefore a quasilocal smoothing operator. The same arguments also apply to the adjoint $P^\ast$ of $P$, so that in the end we can conclude $P \in \IU(E,F)$, i.e., we have shown $\UPsiDO^{-\infty}(E,F) \subset \IU(E,F)$.

Since the other inclusion does hold by definition, we get the claim.\footnote{Of course, our definition of pseudodifferential operators was arranged such that this lemma holds.}

\begin{lem}\label{lem:PDO_-infinity_equal_quasilocal_smoothing}
$\UPsiDO^{-\infty}(E,F) = \IU(E,F)$.
\end{lem}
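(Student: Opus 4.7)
The plan is to establish the two inclusions separately. The inclusion $\IU(E,F) \subset \UPsiDO^{-\infty}(E,F)$ is immediate from Definition \ref{defn:pseudodiff_operator}: given any $A \in \IU(E,F)$, for each $k \in \IZ$ one may exhibit $A$ as an element of $\UPsiDO^k(E,F)$ by taking $P_{-\infty} := A$ and all local pieces $P_i := 0$, and the uniformity condition holds vacuously. Since this works for every $k$, we get $A \in \UPsiDO^{-\infty}(E,F)$.

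For the nontrivial direction, fix $P \in \UPsiDO^{-\infty}(E,F)$. I would first show that $P$ is a smoothing operator in the sense of Definition above, and then show that $P$ and $P^\ast$ are quasilocal. For smoothness: since $P \in \UPsiDO^k(E,F)$ for every $k \in \IZ$, Proposition \ref{prop:pseudodiff_extension_sobolev} gives a continuous extension $P \colon H^s(E) \to H^{s-k}(F)$ for all $s, k \in \IZ$; in particular $P$ is bounded from $H^{-k}(E)$ to $H^l(F)$ for every pair $k, l \in \IN_0$, which by Lemma \ref{lem:smoothing_operator_iff_bounded} means $P$ is a smoothing operator $H^{-\infty}_\iota(E) \to H^\infty(F)$.

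For quasi-locality, pick any decomposition $P = P_{-\infty} + \sum_i P_i$ as in Definition \ref{defn:pseudodiff_operator}. The key observation is that each $P_i$ is supported in $B_{2\varepsilon}(x_i)$, so the kernel of $Q := \sum_i P_i$ is supported in $\{(x,y) : d(x,y) \le 4\varepsilon\}$. Hence $Q$ has finite propagation bounded by $4\varepsilon$, and is therefore trivially quasilocal (one may take its dominating function $\mu$ to vanish for $R > 4\varepsilon$). Moreover $Q = P - P_{-\infty}$ is the difference of two smoothing operators, hence smoothing. Thus $P = P_{-\infty} + Q$ is the sum of a quasilocal smoothing operator (by hypothesis $P_{-\infty} \in \IU(E,F)$) and a finite-propagation smoothing operator, which is a quasilocal smoothing operator.

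Finally, exactly the same argument applied to $P^\ast = P_{-\infty}^\ast + \sum_i P_i^\ast$ works: $P_{-\infty}^\ast \in \IU(F,E)$ by the very definition of $\IU$, while $\sum_i P_i^\ast$ inherits the finite propagation bound $4\varepsilon$ from the support of its kernel, and is smoothing as before. Hence $P^\ast$ is also a quasilocal smoothing operator, so $P \in \IU(E,F)$. There is no real obstacle here; the proof is essentially bookkeeping organised around the decomposition \eqref{eq:defn_pseudodiff_operator_sum} and the fact that ``finite propagation $+$ smoothing $\Rightarrow$ quasilocal smoothing''. The only point that required the right setup in advance was insisting on the adjoint clause in the definition of $\IU$, so that the adjoint argument goes through verbatim.
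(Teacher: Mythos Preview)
Your proof is correct and follows essentially the same route as the paper: the inclusion $\IU(E,F) \subset \UPsiDO^{-\infty}(E,F)$ holds by definition, and for the converse one uses Proposition~\ref{prop:pseudodiff_extension_sobolev} together with Lemma~\ref{lem:smoothing_operator_iff_bounded} to see that $P$ is smoothing, then observes that $\sum_i P_i$ has finite propagation so that $P$ (and likewise $P^\ast$) is quasilocal smoothing. Your write-up is in fact a bit more explicit than the paper's, spelling out the propagation bound $4\varepsilon$ and why $Q = P - P_{-\infty}$ is smoothing, but the argument is the same.
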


One of the important properties of pseudodifferential operators on compact manifolds is that the composition of an operator $P \in \UPsiDO^k(E,F)$ and $Q \in \UPsiDO^l(F,G)$ is again a pseudodifferential operator of order $k+l$: $PQ \in \UPsiDO^{k+l}(E,G)$. We can prove this also in our setting by writing
\begin{align*}
PQ & = \Big(P_{-\infty} + \sum_i P_i\Big) \Big(Q_{-\infty} + \sum_j Q_j\Big)\\
& = P_{-\infty} Q_{-\infty} + \sum_i P_i Q_{-\infty} + \sum_j P_{-\infty} Q_j + \sum_{i,j} P_i Q_j
\end{align*}
and then arguing as follows.
\begin{itemize}
\item The first summand is an element of $\IU(E,G)$: in \cite[Proposition 5.2]{roe_index_1} it was shown that the composition of two quasilocal operators is again quasilocal and it is clear that composing smoothing operators again gives smoothing operators, resp. it is easy to see that composing two operators which may be approximated by finite propagation operators again gives such an operator.

\item The second and third summands are from $\IU(E,G)$ due to Proposition \ref{prop:pseudodiff_extension_sobolev} and since the sums are uniformly locally finite, the operators $P_i$ and $Q_j$ are supported in coordinate balls of uniform radii (i.e., have finite propagation which is uniformly bounded from above) and their operator norms are uniformly bounded due to the uniformity condition in the definition of pseudodifferential operators.

\item The last summand is a uniformly locally finite sum of pseudodifferential operators of order $k+l$ (here we use the corresponding result for compact manifolds) and to see the Uniformity Condition \eqref{eq:uniformity_defn_PDOs} we use \cite[Theorem III.§3.10]{lawson_michelsohn}: it states that the symbol of $P_i Q_j$ has formal development $\sum_\alpha \frac{i^{|\alpha|}}{\alpha !} (D_\xi^\alpha p_i)(D_x^\alpha q_j)$. So we may deduce the uniformity condition for $P_i Q_j$ from the one for $P_i$ and for $Q_j$.
\end{itemize}

Other properties that immediately generalize from the compact to the bounded geometry case is firstly, that the commutator of two uniform pseudodifferential operators whose symbols commute (Definition~\ref{defnnkdf893}) is of one order lower than it should a priori be, and secondly, that multiplication with a function $f \in C_b^\infty(M)$ defines a uniform pseudodifferential operator of order $0$.

So we have the following important proposition:

\begin{prop}\label{prop:PsiDOs_filtered_algebra}
$\UPsiDO^\ast(E)$ is a filtered $^\ast$-algebra, i.e., for all $k, l \in\IZ $ we have
\[\UPsiDO^k(E) \circ \UPsiDO^l(E) \subset \UPsiDO^{k+l}(E),\]
and so $\UPsiDO^{-\infty}(E)$ is a two-sided $^\ast$-ideal in $\UPsiDO^\ast(E)$.

Furthermore, we have $[P, Q] \in \UPsiDO^{k+l-1}(E)$ for $P \in \UPsiDO^k(E)$, $Q \in \UPsiDO^l(E)$, $k,l \in \IZ$, provided the symbols of $P$ and $Q$ commute.

Moreover, multiplication with a function $f \in C_b^\infty(M)$ defines a uniform pseudodifferential operator of order $0$ whose symbol commutes with any other symbol.
\end{prop}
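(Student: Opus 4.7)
The plan is to make precise the composition outline already sketched just before the proposition. For $P\in\UPsiDO^k(E)$ and $Q\in\UPsiDO^l(E)$, fix a single uniformly locally finite covering $\{B_{2\varepsilon}(x_i)\}$ and a common subordinate partition of unity as in Lemma~\ref{lem:nice_coverings_partitions_of_unity}, so that both operators decompose as $P = P_{-\infty}+\sum_i P_i$ and $Q = Q_{-\infty}+\sum_j Q_j$, and expand $PQ$ into the four summands displayed in the excerpt. The first summand $P_{-\infty}Q_{-\infty}$ lies in $\IU(E)$ because $\IU(E)$ is closed under composition: two quasilocal operators compose to a quasilocal operator by \cite[Proposition~5.2]{roe_index_1}, the composition of smoothing operators is smoothing, and $(P_{-\infty}Q_{-\infty})^\ast = Q_{-\infty}^\ast P_{-\infty}^\ast$ also lies there. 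For the two mixed sums, I would use that each local piece has finite propagation (uniformly bounded by $2\varepsilon$), that its operator norm on Sobolev spaces is uniformly bounded in the chart index by Remark~\ref{rem:bound_operator_norm_PDO}, and that $P_{-\infty}, Q_{-\infty}$ are quasilocal smoothing; combined with uniform local finiteness this gives quasilocal smoothing sums. For the last summand, represent $P_iQ_j$ in local normal coordinates and use the standard symbolic composition formula on $\IR^m$ (\cite[Theorem~III.§3.10]{lawson_michelsohn}), $\sigma(P_iQ_j)\sim \sum_\alpha \tfrac{i^{|\alpha|}}{\alpha!}(D_\xi^\alpha p_i)(D_x^\alpha q_j)$. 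Since this is a polynomial expression in partial derivatives of $p_i,q_j$, the uniformity bounds for $P$ and $Q$ transfer to $P_iQ_j$; uniform local finiteness of the supports then yields $PQ\in\UPsiDO^{k+l}(E)$.

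Closure under the involution is argued in the same spirit: $P^\ast = P_{-\infty}^\ast + \sum_i P_i^\ast$, where $P_{-\infty}^\ast\in\IU(E)$ by Definition~\ref{defn:quasiloc_smoothing}, and each $P_i^\ast$ is, via the adjoint formula for pseudodifferential operators on $\IR^m$, again a local pseudodifferential operator of order $k$ with symbol polynomial in the derivatives of $p_i$, so the Uniformity Condition \eqref{eq:uniformity_defn_PDOs} is preserved. Together with the filtered composition property this shows that $\UPsiDO^\ast(E)$ is a filtered $^\ast$-algebra, and in view of Lemma~\ref{lem:PDO_-infinity_equal_quasilocal_smoothing} the subset $\UPsiDO^{-\infty}(E)=\IU(E)$ is a two-sided $^\ast$-ideal.

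For the commutator statement, rerun the composition argument on both $PQ$ and $QP$. The first three summands in each expansion lie in $\UPsiDO^{-\infty}(E)$ and contribute nothing to the leading order. In the last summand $\sum_{i,j} P_iQ_j - \sum_{i,j} Q_jP_i$, the $\alpha=0$ terms of the symbolic composition formula give $p_iq_j - q_jp_i$, which vanishes by hypothesis that the symbols commute; every remaining term in the formal development carries at least one $D_\xi$, so lowers the order by one. Uniformity in $i,j$ is inherited from the uniformity for $P$ and $Q$, so $[P,Q]\in\UPsiDO^{k+l-1}(E)$.

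Finally, multiplication by $f\in C_b^\infty(M)$ is a local operator, so in any normal coordinate chart with synchronous framing it is the matrix of pseudodifferential operators with symbol $p(x,\xi) = f(x)\cdot I$, of order $0$ and independent of $\xi$. The bounds $|D_x^\alpha f|\le C_\alpha$ furnished by $C_b^\infty$-boundedness yield the Uniformity Condition with no $\xi$-derivatives required, and no $P_{-\infty}$ term is needed; hence $M_f\in\UPsiDO^0(E)$. Since the symbol is a scalar multiple of the identity it commutes pointwise with every other symbol. The main obstacle throughout is not conceptual but bookkeeping: one must verify at each step that the constants produced by the local $\IR^m$ calculus depend only on the input bounds and the dimension, never on the chart index $i$ (or $j$); once this uniformity is tracked, the arguments reduce to the standard compact case together with the quasilocal closure results of Roe.
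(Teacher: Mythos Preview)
Your proposal is correct and follows essentially the same approach as the paper: decompose $PQ$ into the four summands, handle the quasilocal smoothing pieces via \cite[Proposition~5.2]{roe_index_1} together with the Sobolev bounds of Remark~\ref{rem:bound_operator_norm_PDO}, and treat the purely local piece via the symbolic composition formula of \cite[Theorem~III.\S3.10]{lawson_michelsohn}. The paper is in fact terser than you on the commutator and multiplication-by-$f$ statements, merely noting that these ``immediately generalize from the compact to the bounded geometry case.''
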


The last property that generalizes to our setting and that we want to mention is the following (the proof of \cite[Theorem III.§3.9]{lawson_michelsohn} generalizes directly):

\begin{prop}\label{prop:Pu_smooth_if_u_smooth}
Let $P \in \UPsiDO^k(E,F)$ be a uniform pseudodifferential operator of arbitrary order and let $u \in H^s(E)$ for some $s \in \IZ$.

Then, if $u$ is smooth on some open subset $U \subset M$, $Pu$ is also smooth on $U$.
\end{prop}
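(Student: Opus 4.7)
Since smoothness is a local property, it suffices to fix an arbitrary $x_0 \in U$ and exhibit a neighbourhood of $x_0$ on which $Pu$ is smooth. The plan is to choose nested open neighbourhoods $V' \Subset V \Subset U$ of $x_0$ and cutoff functions $\phi, \psi \in C_c^\infty(M)$ with $\phi \equiv 1$ on $V$ and $\supp \phi \subset U$, and with $\psi \equiv 1$ on $V'$ and $\supp \psi \subset V$; in particular $\supp \psi$ and $\supp(1-\phi)$ are disjoint. I then split $Pu = P(\phi u) + P((1-\phi)u)$ and handle the two summands separately, following the classical pseudolocality argument as in \cite[Theorem III.§3.9]{lawson_michelsohn} but with the uniform local finiteness of the definition doing the work that compactness does there.

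The first summand is easy: because $u$ is smooth on the neighbourhood $U \supset \supp \phi$, the section $\phi u$ is smooth and compactly supported, hence lies in $H^r(E)$ for every $r \in \IZ$. Proposition \ref{prop:pseudodiff_extension_sobolev} gives $P(\phi u) \in H^{r-k}(F)$ for every $r$, and the Sobolev Embedding Theorem \ref{thm:sobolev_embedding} upgrades this to $P(\phi u) \in C_b^\infty(F)$, smooth on all of $M$. For the second summand it suffices to verify that $\psi \cdot P((1-\phi)u)$ is smooth, since $\psi \equiv 1$ on $V'$. Using the decomposition $P = P_{-\infty} + \sum_i P_i$ from Definition \ref{defn:pseudodiff_operator}, the smoothing piece is handled directly: $(1-\phi)u \in H^s(E)$ because multiplication by $1-\phi \in C_b^\infty(M)$ is continuous on $H^s(E)$, and $P_{-\infty} \in \IU(E,F)$ sends $H^s(E)$ into $H^\infty(F) \subset C_b^\infty(F)$, so multiplying by $\psi$ preserves smoothness. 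For the series $\sum_i \psi P_i (1-\phi) u$, compactness of $\supp \psi$ together with the uniform local finiteness of $\{B_{2\varepsilon}(x_i)\}$ leaves only finitely many indices $i$ for which $B_{2\varepsilon}(x_i) \cap \supp \psi \neq \emptyset$, since $P_i$ is supported in $B_{2\varepsilon}(x_i)$; all other terms vanish identically.

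For each of the remaining finitely many indices, I would work in the synchronous framing on the coordinate ball $B_{2\varepsilon}(x_i)$. There $\psi$ and $1-\phi$ pull back to smooth cutoff functions on $\IR^m$ with disjoint supports, and $P_i$ becomes a classical matrix pseudodifferential operator in H\"ormander's class. The Schwartz kernel of the sandwiched operator is $\psi(x) k_{P_i}(x,y)(1-\phi(y))$, which is everywhere smooth because $k_{P_i}$ is smooth off the diagonal and the disjoint-support condition forces $x \neq y$ on the support of the product; this is the standard pseudolocality of pseudodifferential operators on $\IR^m$. Such an operator with smooth compactly supported kernel is smoothing and in particular maps $u \in H^s(E)$ to a smooth section. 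Summing the finitely many contributions yields that $\psi \cdot Pu \in C_b^\infty(F)$, so $Pu$ is smooth on $V'$, and $x_0 \in U$ being arbitrary gives smoothness of $Pu$ on all of $U$. The only real step beyond the compact argument is the reduction of the uniformly locally finite series to a finite sum, which the bounded-geometry setup of Definition \ref{defn:pseudodiff_operator} delivers for free; I do not foresee any genuine obstacle.
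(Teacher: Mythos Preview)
Your proposal is correct and is precisely the argument the paper has in mind: the paper gives no proof of its own but simply remarks that ``the proof of \cite[Theorem III.§3.9]{lawson_michelsohn} generalizes directly,'' and what you have written is exactly that generalization, with the decomposition $P = P_{-\infty} + \sum_i P_i$ and the uniform local finiteness of the cover replacing compactness of the manifold. There is nothing to add.
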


\section{Uniformity of operators of non-positive order}\label{sec:uniformity_PDOs}

Now we get to the important statement that the uniform pseudodifferential operators we have defined are, in fact, ``uniform'' in the meaning to be defined now (the discussion here is strongly related to the fact that symmetric and elliptic uniform pseudodifferential operators will define uniform $K$-homology classes).

Let $T \in \IK(L^2(E))$ be a compact operator. We know that $T$ is the limit of finite rank operators, i.e., for every $\varepsilon > 0$ there is a finite rank operator $k$ such that $\|T - k\| < \varepsilon$. Now given a collection $\mathcal{A} \subset \IK(L^2(E))$ of compact operators, it may happen that for every $\varepsilon > 0$ the rank needed to approximate an operator from $\mathcal{A}$ may be bounded from above by a common bound for all operators. This is formalized in the following definition.

\begin{defn}[Uniformly approximable collections of operators]\label{defn:uniformly_approximable_collection}
A collection of operators $\mathcal{A} \subset \IK(L^2(E))$ is said to be \emph{uniformly approximable}, if for every $\varepsilon > 0$ there is an $N > 0$ such that for every $T \in \mathcal{A}$ there is a rank-$N$ operator $k$ with $\|T - k\| < \varepsilon$.
\end{defn}

\begin{examples}\label{ex:uniformly_approximable_collections}
Every collection of finite rank operators with uniformly bounded rank is uniformly approximable.

Furthermore, every finite collection of compact operators is uniformly approximable and so also every totally bounded subset of $\IK(L^2(E))$.

The converse is in general false since a uniformly approximable family need not be bounded (take infinitely many rank-$1$ operators with operator norms going to infinity).

Even if we assume that the uniformly approximable family is bounded we do not necessarily get a totally bounded set: let $(e_i)_{i \in \IN}$ be an orthonormal basis of $L^2(E)$ and $P_i$ the orthogonal projection onto the $1$-dimensional subspace spanned by the vector $e_i$. Then the collection $\{P_i\} \subset \IK(L^2(E))$ is uniformly approximable (since all operators are of rank $1$) but not totally bounded (since $\|P_i - P_j\| = 1$ for $i \not= j$)\footnote{Another way to see that $\{P_i\}$ is not totally bounded is to use the characterization of totally bounded subsets of $\IK(H)$ from \cite[Theorem 3.5]{anselone_palmer}: a family $\mathcal{A} \subset \IK(H)$ is totally bounded if and only if both $\mathcal{A}$ and $\mathcal{A}^\ast$ are collectively compact, i.e., the sets $\{T v \ | \ T \in \mathcal{A}, v \in H \text{ with } \|v\| = 1\} \subset H$ and $\{T^\ast v \ | \ T \in \mathcal{A}, v \in H \text{ with } \|v\| = 1\} \subset H$ have compact closure.}.
\end{examples}

Let us define
\begin{equation*}\label{defn:L-Lip_R(M)}
\LLip_R(M) := \{ f \in C_c(M) \ | \ f \text{ is }L\text{-Lipschitz}, \diam(\supp f) \le R \text{ and } \|f\|_\infty \le 1\}.
\end{equation*}

\begin{defn}[{\cite[Definition 2.3]{spakula_uniform_k_homology}}]\label{defn:uniform_operators_manifold}
Let $T \in \IB(L^2(E))$. We say that $T$ is \emph{uniformly locally compact}, if for every $R, L > 0$ the collection
\[\{fT, Tf \ | \ f \in \LLip_R(M)\}\]
is uniformly approximable.

We say that $T$ is \emph{uniformly pseudolocal}, if for every $R, L > 0$ the collection
\[\{[T, f] \ | \ f \in \LLip_R(M)\}\]
is uniformly approximable.
\end{defn}

We will now show that uniform pseudodifferential operators of negative order are uniformly locally compact and that uniform pseudodifferential operators of order $0$ are uniformly pseudolocal. We will start with the operators of negative order.

\begin{prop}\label{prop:quasilocal_negative_order_uniformly_locally_compact}
Let $A\in \IB(L^2(E))$ be a finite propagation operator of negative order $k < 0$\footnote{See Definition \ref{defn:quasiloc_ops}. Note that we do not assume that $A$ is a pseudodifferential operator.} such that its adjoint operator $A^\ast$ also has finite propagation and is of negative order $k^\prime < 0$. Then $A$ is uniformly locally compact. Even more, the collection
\[\{fT, Tf \ | \ f \in B_R(M)\}\]
is uniformly approximable for every $R > 0$, where $B_R(M)$ consists of all bounded Borel functions $h$ on $M$ with $\diam(\supp h) \le R$ and $\|h\|_\infty \le 1$.
\end{prop}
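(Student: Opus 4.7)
The plan is to exploit finite propagation to reduce the problem to operators between $L^2$-sections over balls of uniformly bounded radius, invoke Rellich compactness through the Sobolev embedding $H^{-k}(V) \hookrightarrow L^2(V)$, and then use bounded geometry to make the rank of approximation uniform in $f$.

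First, I would localize using finite propagation. Let $r > 0$ bound the propagation of both $A$ and $A^\ast$. For $f \in B_R(M)$ choose $x_0 \in M$ with $\supp f \subset \overline{B_{R/2}(x_0)}$, and set $U := \overline{B_{R/2+r}(x_0)}$. If $u \in L^2(E)$ is supported outside $U$, then $\supp(Au) \subset B_r(\supp u) \subset \{d(\cdot, x_0) > R/2\}$, which is disjoint from $\supp f$; hence $fAu = 0$. So $fA = (fA)\chi_U$, and $fA$ may be viewed as a map $L^2(U, E) \to L^2(\supp f, E)$. Since $A \colon L^2 \to H^{-k}$ is bounded with $-k > 0$, and the Rellich embedding $H^{-k}(V) \hookrightarrow L^2(V)$ is compact on the bounded ball $V := B_{R+2r}(x_0)$, the operator $fA$ is compact. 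The case $Af$ reduces to the previous case for $A^\ast$ via $(Af)^\ast = \bar{f} A^\ast$, since rank and operator norm are preserved under adjoints.

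Second, I would upgrade compactness to uniform approximability by invoking bounded geometry. By Lemma \ref{lem:nice_coverings_partitions_of_unity}, $V$ is covered by a number $N_0 = N_0(R,r)$ of normal coordinate charts of a fixed radius $\varepsilon < \injrad_M/3$, with $N_0$ independent of $x_0$. Via exponential coordinates and synchronous framings, each chart yields a bi-Lipschitz identification of local Sobolev sections of $E$ with vector-valued Sobolev functions on a Euclidean ball, with uniform distortion constants by Lemmas \ref{lem:transition_functions_uniformly_bounded} and \ref{lem:equiv_characterizations_bounded_geom_bundles}, and with global-local Sobolev norm equivalence given by \eqref{eq:sobolev_norm_local}. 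On a fixed Euclidean ball the embedding $H^{-k} \hookrightarrow L^2$ has approximation numbers decaying like $n^{k/m}$ by Weyl-type asymptotics, so for any $\epsilon > 0$ there is $N(\epsilon)$, depending only on $\epsilon$, $m$, $-k$, and the ball radius, such that a rank-$N(\epsilon)$ truncation approximates the embedding in operator norm within $\epsilon/(N_0 \|A\|_{L^2 \to H^{-k}})$. Assembling these truncations across the $N_0$ charts by means of a fixed partition of unity yields a rank $N_0 \cdot N(\epsilon)$ approximation of $fA$, with rank bound uniform in $f \in B_R(M)$.

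The main obstacle, I expect, is the bookkeeping in the second step: one must verify that every constant appearing in the chain of Sobolev embeddings, coordinate identifications, and truncation estimates is genuinely uniform in $x_0 \in M$. Bounded geometry is crafted precisely for this, but the argument chains several uniform estimates together, and making sure that none of them secretly depends on $f$ or $x_0$ (through the chart-dependent framings, cut-offs, or distortion constants) is the delicate point.
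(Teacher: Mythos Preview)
Your proposal is correct and follows essentially the same route as the paper: localize via finite propagation, factor through the compact Rellich embedding $H^{-k}\hookrightarrow L^2$ on a ball whose radius depends only on $R$ and the propagation bound $r$, appeal to bounded geometry for uniformity of the approximation numbers, and treat the remaining side via the adjoint. The only cosmetic differences are that the paper handles $Af$ first (you do $fA$ first) and compresses the uniform-Rellich step into a one-line appeal to bounded geometry, whereas you spell it out via Weyl asymptotics in normal charts; also note that $\diam(\supp f)\le R$ only guarantees $\supp f\subset \overline{B_R(x_0)}$ for $x_0\in\supp f$, not radius $R/2$, but this is immaterial.
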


\begin{proof}
Let $f \in B_R(M)$, $K := \supp f \subset M$ and $r$ be the propagation of $A$. The operator $\chi A f = A f$, where $\chi$ is the characteristic function of $B_r(K)$, factores as
\[L^2(E) \stackrel{\cdot f}\longrightarrow L^2(E|_K) \stackrel{\chi \cdot A}\longrightarrow H^{-k}(E|_{B_r(K)}) \hookrightarrow L^2(E|_{B_r(K)}) \to L^2(E).\]
The following properties hold:
\begin{itemize}
\item multiplication with $f$ has operator norm $\le 1$, since $\|f\|_\infty \le 1$, and analogously for the multiplication with $\chi$,
\item the norm of $\chi \cdot A\colon L^2(E|_K) \to H^{-k}(E|_{B_r(K)})$ can be bounded from above by the norm of $A\colon L^2(E) \to H^{-k}(E)$ (i.e., the upper bound does not depend on $K$ nor $r$),
\item the inclusion $H^{-k}(E|_{B_r(K)}) \hookrightarrow L^2(E|_{B_r(K)})$ is compact (due to  the Theorem of Rellich--Kondrachov) and this compactness is uniform, i.e., its approximability by finite rank operators\footnote{Here we mean the existence of an upper bound on the rank needed to approximate the operator by finite rank operators, given an $\varepsilon > 0$.} depends only on $R$ (the upper bound for the diameter of $\supp f$) and $r$, but not on $K$ (this uniformity is due to the bounded geometry of $M$ and of the bundles $E$ and $F$), and
\item the inclusion $L^2(E|_{B_r(K)}) \to L^2(E)$ is of norm $\le 1$.
\end{itemize}
From this we conclude that the operator $\chi A f = A f$ is compact and this compactness is uniform, i.e., its approximability by finite rank operators depends only on $R$ and $r$. So we can conclude that $\{Af \ | \ f \in B_R(M)\}$ is uniformly approximable.

Applying the same reasoning to the adjoint operator,\footnote{By assumption the adjoint operator also has finite propagation and is of negative order. So we conclude that $\{A^\ast f \ | \ f \in B_R(M)\}$ is uniformly approximable. But a collection $\mathcal{A}$ of compact operators is uniformly approximable if and only if the adjoint family $\mathcal{A}^\ast$ is uniformly approximable. So we get that $\{(A^\ast f)^\ast = \overline{f} A \ | \ f \in B_R(M)\}$ is uniformly approximable.} we conclude that $A$ is uniformly locally compact.
\end{proof}

Using an approximation argument\footnote{Note that we will not approximate the quasilocal operator $A$ itself by finite propagation operators in this argument. In fact, it is an open problem whether quasilocal operators may be approximated by finite propagation operators; see Section \ref{sec_final_remarks}.} we may also show the following corollary:

\begin{cor}\label{cor:quasilocal_neg_order_uniformly_locally_compact}
Let $A$ be a quasilocal operator of negative order and let the same hold true for its adjoint $A^\ast$. Then $A$ is uniformly locally compact; in fact, it even satisfies the stronger condition from the above Proposition \ref{prop:quasilocal_negative_order_uniformly_locally_compact}.
\end{cor}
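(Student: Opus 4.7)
The plan is to imitate the proof of Proposition~\ref{prop:quasilocal_negative_order_uniformly_locally_compact}, using quasilocality in place of finite propagation: rather than exactly localizing $A$ to a neighborhood of $K := \supp f$, I approximate it by such a localization, with the error controlled by a dominating function $\mu$ of $A$. Given $f \in B_R(M)$ and $r > 0$, pick a smooth cutoff $\tilde\chi_r$ with $\tilde\chi_r \equiv 1$ on $B_r(K)$, $\supp \tilde\chi_r \subset B_{r+1}(K)$, and with all derivatives bounded uniformly in $r$ and in the choice of $K$; this is possible by the bounded geometry of $M$. Decompose $Af = \tilde\chi_r Af + (1-\tilde\chi_r)Af$.

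For the tail term, I would apply quasilocality to get
\[\|(1-\tilde\chi_r) A f u\|_{L^2} \le \|A(fu)\|_{L^2, M \setminus B_r(K)} \le \mu(r) \|fu\|_{L^2} \le \mu(r) \|u\|_{L^2},\]
so that the operator norm $\|Af - \tilde\chi_r Af\|_{\mathrm{op}} \le \mu(r)$ tends to $0$ as $r \to \infty$ independently of $f$.

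For the main term, the factorization
\[L^2(E) \xrightarrow{f \cdot} L^2(E|_K) \xrightarrow{A} H^{-k}(E) \xrightarrow{\tilde\chi_r \cdot} H^{-k}(E|_{B_{r+1}(K)}) \hookrightarrow L^2(E|_{B_{r+1}(K)}) \hookrightarrow L^2(E),\]
where $k < 0$ is the order of $A$, exhibits $\tilde\chi_r Af$ as compact with a finite-rank approximation rate depending only on $R$, $r$, and the error tolerance. Each arrow has norm bounded uniformly in $K$: multiplication by $f$ has norm at most $1$; $A$ is bounded $L^2 \to H^{-k}$ by hypothesis; multiplication by $\tilde\chi_r$ is a bounded multiplier on $H^{-k}$ with norm controlled by the uniform $C^\infty_b$-bounds on the cutoff; and the Rellich--Kondrachov inclusion $H^{-k}(E|_{B_{r+1}(K)}) \hookrightarrow L^2(E|_{B_{r+1}(K)})$ over the bounded-diameter domain admits finite-rank approximations whose rate is independent of $K$, again by bounded geometry.

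Combining the two pieces: for $\varepsilon > 0$, first fix $r$ with $\mu(r) < \varepsilon/2$, then approximate $\tilde\chi_r Af$ to within $\varepsilon/2$ by a rank-$N$ operator with $N = N(\varepsilon, R)$; this yields uniform approximability of $\{Af : f \in B_R(M)\}$. For $\{fA : f \in B_R(M)\}$, write $fA = (A^\ast \bar f)^\ast$ and apply the same argument to $A^\ast$ (which is quasilocal of negative order by hypothesis), noting that uniform approximability is preserved under adjoints since rank and operator norm are. The main technical obstacle I anticipate is the uniform-in-$K$ rate in the Rellich step, which requires the covering and partition-of-unity data from Lemma~\ref{lem:nice_coverings_partitions_of_unity} to reduce the inclusion to a bounded Euclidean domain with constants independent of the base point.
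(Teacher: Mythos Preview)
Your proposal is correct and follows essentially the same approach as the paper: split $Af$ into a spatially localized piece and a tail, control the tail by the dominating function $\mu$, and apply the Rellich--Kondrachov argument from Proposition~\ref{prop:quasilocal_negative_order_uniformly_locally_compact} to the localized piece, then pass to the adjoint for $fA$. The only difference is cosmetic: the paper cuts off with the characteristic function $\chi_{B_{r_\varepsilon}(\supp f)}$ directly, whereas you use a smooth cutoff $\tilde\chi_r$ so that multiplication acts boundedly on $H^{-k}$ in the factorization --- this is arguably cleaner, since the paper's characteristic-function cutoff does not obviously preserve $H^{-k}$ and one has to read the Rellich step a bit more carefully there.
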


\begin{proof}
We have to show that $\{Af \ | \ f \in B_R(M)\}$ is uniformly approximable. Let $\varepsilon > 0$ be given and let $r_\varepsilon$ be such that $\mu_A(r) < \varepsilon$ for all $r \ge r_\varepsilon$, where $\mu_A$ is the dominating function of $A$. Then $\chi_{B_{r_\varepsilon}(\supp f)} A f$ is $\varepsilon$-away from $Af$ and the same reasoning as in the proof of the above Proposition \ref{prop:quasilocal_negative_order_uniformly_locally_compact} shows that the approximability (up to an error of $\varepsilon$) of $\chi_{B_{r_\varepsilon}(\supp f)} A f$ does only depend on $R$ and $r_\varepsilon$. From this the claim that $\{Af \ | \ f \in B_R(M)\}$ is uniformly approximable follows.

Using the adjoint operator and the same arguments for it, we conclude that $A$ is uniformly locally compact.
\end{proof}

\begin{cor}\label{prop:PsiDOs_negative_order_uniformly_locally_compact}
Let $P \in \UPsiDO^k(E)$ be a uniform pseudodifferential operator of negative order $k < 0$. Then $P$ is uniformly locally compact.
\end{cor}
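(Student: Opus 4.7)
The plan is to decompose $P$ according to Definition~\ref{defn:pseudodiff_operator} and apply the previous two results (Proposition~\ref{prop:quasilocal_negative_order_uniformly_locally_compact} and Corollary~\ref{cor:quasilocal_neg_order_uniformly_locally_compact}) to each piece separately. Write
\[ P = P_{-\infty} + Q, \qquad Q := \sum_i P_i, \]
with respect to a uniformly locally finite covering $\{B_{2\varepsilon}(x_i)\}$ and subordinate partition of unity as in Lemma~\ref{lem:nice_coverings_partitions_of_unity}. The two summands fall into the two hypotheses we have already prepared results for.

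For $P_{-\infty}$: by definition $P_{-\infty} \in \IU(E)$, so both $P_{-\infty}$ and $P_{-\infty}^\ast$ are quasilocal smoothing operators and therefore trivially of negative order. Corollary~\ref{cor:quasilocal_neg_order_uniformly_locally_compact} applies and gives that $P_{-\infty}$ is uniformly locally compact, and in fact that $\{fP_{-\infty}, P_{-\infty}f \mid f \in B_R(M)\}$ is uniformly approximable for every $R>0$.

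For $Q$: each $P_i$ is supported in $B_{2\varepsilon}(x_i)$, hence has propagation $\le 4\varepsilon$; by uniform local finiteness of the cover the same propagation bound holds for the sum $Q$. Order $k$ of $Q$ follows from $Q = P - P_{-\infty}$ together with Proposition~\ref{prop:pseudodiff_extension_sobolev}. The adjoint is formed term by term, $Q^\ast = \sum_i P_i^\ast$, and each $P_i^\ast$ is supported in the same ball $B_{2\varepsilon}(x_i)$ (being the formal adjoint of a matrix of symbolic pseudodifferential operators on $\IR^m$ supported there), so $Q^\ast$ also has propagation $\le 4\varepsilon$; its order is $k$ by duality from the boundedness $Q\colon H^s(E) \to H^{s-k}(E)$. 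Thus $Q$ satisfies the hypotheses of Proposition~\ref{prop:quasilocal_negative_order_uniformly_locally_compact}, and so $\{fQ, Qf \mid f \in B_R(M)\}$ is uniformly approximable.

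Finally, combine: for $f \in \LLip_R(M) \subset B_R(M)$ we have $fP = fP_{-\infty} + fQ$ and $Pf = P_{-\infty}f + Qf$. The Minkowski sum of two uniformly approximable families is uniformly approximable (if each summand admits a rank-$N_j(\varepsilon/2)$ approximation within $\varepsilon/2$, the sum admits a rank-$(N_1(\varepsilon/2)+N_2(\varepsilon/2))$ approximation within $\varepsilon$). This yields uniform approximability of $\{fP, Pf \mid f \in \LLip_R(M)\}$, which is exactly the statement that $P$ is uniformly locally compact. There is no serious obstacle here; the argument is essentially bookkeeping, with the only point requiring care being the verification that $Q$ and $Q^\ast$ genuinely have finite propagation and negative order as global operators on $L^2(E)$, which is ensured by the uniform local finiteness of the defining sum together with the uniformity condition~\eqref{eq:uniformity_defn_PDOs}.
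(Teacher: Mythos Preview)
Your proof is correct and follows essentially the same approach the paper intends: the corollary is stated without proof immediately after Corollary~\ref{cor:quasilocal_neg_order_uniformly_locally_compact}, the implied argument being that a uniform pseudodifferential operator (and its adjoint, by the $^\ast$-algebra property of Proposition~\ref{prop:PsiDOs_filtered_algebra}) is quasilocal of negative order, so Corollary~\ref{cor:quasilocal_neg_order_uniformly_locally_compact} applies directly. Your version is slightly more explicit in that you split $P = P_{-\infty} + Q$ and invoke Proposition~\ref{prop:quasilocal_negative_order_uniformly_locally_compact} for the finite-propagation piece $Q$ and Corollary~\ref{cor:quasilocal_neg_order_uniformly_locally_compact} for $P_{-\infty}$ separately, but this is only a cosmetic difference, not a genuinely different route.
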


Let us now get to the case of uniform pseudodifferential operators of order $0$, where we want to show that such operators are uniformly pseudolocal.

Recall the following fact for compact manifolds: $T$ is pseudolocal\footnote{That is to say, $[T, f]$ is a compact operator for all $f \in C_c(M)$.} if and only if $f T g$ is a compact operator for all $f, g \in C(M)$ with disjoint supports. This observation is due to Kasparov and a proof might be found in, e.g., \cite[Proposition 5.4.7]{higson_roe}. We can add another equivalent characterization which is basically also proved in the cited proposition: an operator $T$ is pseudolocal if and only if $f T g$ is a compact operator for all bounded Borel functions $f$ and $g$ on $M$ with disjoint supports.

We have analogous equivalent characterizations for uniformly pseudolocal operators, which we will state in the following lemma. The proof of it is similar to the compact case (and uses the fact that the subset of all uniformly pseudolocal operators is closed in operator norm, which is proved in \cite[Lemma 4.2]{spakula_uniform_k_homology}). Furthermore, in order to prove that the Points 4 and 5 in the statement of the next lemma are equivalent to the other points we need the bounded geometry of $M$. For the convenience of the reader we will give a full proof of the lemma.

Let us introduce the notions $B_b(M)$ for all bounded Borel functions on $M$ and $B_R(M)$ for its subset consisting of all function $h$ with $\diam(\supp h) \le R$ and $\|h\|_\infty \le 1$.

\begin{lem}\label{lem:kasparov_lemma_uniform_approx_manifold}
The following are equivalent for an operator $T \in \IB(L^2(E))$:
\begin{enumerate}
\item $T$ is uniformly pseudolocal,
\item for all $R, L > 0$ the following collection is uniformly approximable:
\[\{f T g, g T f \ | \ f \in B_b(M), \ \! \|f\|_\infty \le 1, \ \! g \in \LLip_R(M), \ \! \supp f \cap \supp g = \emptyset\},\]
\item for all $R, L > 0$ the following collection is uniformly approximable:
\[\{f T g, g T f \ | \ f \in B_b(M), \ \! \|f\|_\infty \le 1, \ \! g \in B_R(M), \ \! d(\supp f, \supp g) \ge L\},\]
\item for every $L > 0$ there is a sequence $(L_j)_{j \in \IN}$ of positive numbers (not depending on the operator $T$) such that
\begin{align*}
\{ f T g, g T f \ | \ & f \in B_b(M)\text{ with }\|f\|_\infty \le 1,\\
& g \in B_R(M) \cap C_b^\infty(M)\text{ with }\|\nabla^j g\|_\infty \le L_j,\text{ and}\\
& \supp f \cap \supp g = \emptyset\}
\end{align*}
is uniformly approximable for all $R, L > 0$.
\item for every $L > 0$ there is a sequence $(L_j)_{j \in \IN}$ of positive numbers (not depending on the operator $T$) such that
\[\{ [T,g] \ | \ g \in B_R(M) \cap C_b^\infty(M)\text{ with }\|\nabla^j g\|_\infty \le L_j\}\]
is uniformly approximable for all $R, L > 0$.
\end{enumerate}
\end{lem}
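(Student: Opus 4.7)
The plan is to prove the cycle $(1) \Leftrightarrow (2) \Leftrightarrow (3)$ and then identify $(4)$ and $(5)$ with this cycle via a uniform-smoothing argument rooted in bounded geometry. Throughout I rely on the following stability principle: if $\{A_\alpha\}$ is uniformly approximable and $\{B_\alpha\}, \{C_\alpha\}$ are collections of multiplication operators of norm $\le 1$, then $\{B_\alpha A_\alpha C_\alpha\}$ is uniformly approximable with the same rank bound, because a rank-$N$ approximation $k$ of $A_\alpha$ yields $B_\alpha k C_\alpha$ of rank $\le N$ with the same error.

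The easy directions are $(1) \Rightarrow (2)$, $(2) \Rightarrow (3)$, and $(3) \Rightarrow (2)$. For $(1) \Rightarrow (2)$ use the identity $fTg = f[T,g]$ (and its adjoint for $gTf$), valid whenever $fg \equiv 0$. For $(2) \Rightarrow (3)$, the separation $d(\supp f, \supp g) \ge L > 0$ allows a Lipschitz cutoff $\chi$ equal to $1$ on $\supp g$ and vanishing on $\supp f$, with Lipschitz constant $2/L$ and support of diameter $\le R + L$; then $fTg = (fT\chi) g$, and $fT\chi$ belongs to the family controlled by $(2)$. For $(3) \Rightarrow (2)$, given $g \in \LLip_R(M)$ and $f \in B_b(M)$ with disjoint supports, set $g_\delta := g \cdot \chi_{\{|g| > \delta\}}$: this is a bounded Borel function with $\|g - g_\delta\|_\infty \le \delta$, and since $g$ is $L$-Lipschitz, $\supp g_\delta$ has distance $\ge \delta/L$ from $\supp f$, so $fTg_\delta$ is uniformly approximable by $(3)$, while $\|fT(g-g_\delta)\| \le \|T\|\delta$ is arbitrarily small.

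The main obstacle is $(2) \Rightarrow (1)$. Given $g \in \LLip_R(M)$ with Lipschitz constant $L$, first pick a Lipschitz cutoff $\chi$ of $\supp g$ with small buffer $R' > 0$, giving
\[
[T,g] = \chi[T,g]\chi + (1-\chi)Tg - gT(1-\chi);
\]
the two off-diagonal terms are disjoint-support products of the type controlled by $(2)$. For the diagonal block $\chi[T,g]\chi$, whose support has diameter $\le R + 2R'$, introduce a finite partition of unity $\{\phi_i\}_{i=1}^{N_0}$ at a small scale $\delta$ from Lemma~\ref{lem:nice_coverings_partitions_of_unity}, and expand $\chi[T,g]\chi = \sum_{i,j} \phi_i[T,g]\phi_j$. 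Pairs with disjoint supports give finitely many disjoint-support products controlled by $(2)$. For pairs with overlapping supports, replacing $g$ locally by a constant $c_{ij}$ yields $\|\phi_i[T,g]\phi_j\| = \|\phi_i[T, g - c_{ij}]\phi_j\| \le 2\|T\|L\delta$; the finite-overlap property of the cover then bounds the total norm of the overlapping part by $C_{\mathrm{geom}} \cdot L\delta$, which is arbitrarily small for small $\delta$. Bounded geometry is essential to keep $N_0$, the Lipschitz constants of the $\phi_i$, and the overlap constant $C_{\mathrm{geom}}$ all uniform in $g$, so that the final rank bound depends only on $\varepsilon, R, L$.

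The implications $(1) \Rightarrow (5)$ and $(2) \Rightarrow (4)$ hold trivially by restricting the test-function class, with the first-derivative bound $L_1$ supplying a Lipschitz constant. For $(5) \Rightarrow (1)$ and $(4) \Rightarrow (2)$, approximate $g \in \LLip_R(M)$ by $g_\varepsilon \in C_b^\infty$ obtained by convolving $g$ in each normal coordinate chart with a fixed bump of scale $\varepsilon$ and stitching via the partition of unity from Lemma~\ref{lem:nice_coverings_partitions_of_unity}; Lemmas~\ref{lem:transition_functions_uniformly_bounded} and \ref{lem:nice_coverings_partitions_of_unity} together ensure $\|\nabla^j g_\varepsilon\|_\infty \le L_j(\varepsilon)$ with constants independent of $g$, while $\|g - g_\varepsilon\|_\infty \to 0$ as $\varepsilon \to 0$. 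For any $\eta > 0$, choosing $\varepsilon$ so that $\|g - g_\varepsilon\|_\infty \le \eta/(4\|T\|)$ gives $\|[T,g] - [T, g_\varepsilon]\| \le \eta/2$, and applying $(5)$ to the uniformly-controlled family $\{[T, g_\varepsilon]\}$ produces an $\eta$-approximation of $[T,g]$ by a finite-rank operator of rank depending only on $\eta, L, R$.
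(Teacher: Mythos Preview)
Your cycle $(1)\Leftrightarrow(2)\Leftrightarrow(3)$ is essentially fine, though it differs from the paper's and has one soft spot. In $(2)\Rightarrow(1)$ you partition the \emph{domain} by a partition of unity $\{\phi_i\}$; the paper instead partitions the \emph{range} of the Lipschitz function into small intervals and uses the characteristic functions $\chi_i$ of the preimages. The advantage of the paper's choice is that the $\chi_i$ are genuinely orthogonal projections, so the ``near-diagonal'' remainder $\sum_{|i-j|=1}(f(x_i)-f(x_j))\chi_j T\chi_i$ is a direct sum and its norm is just the maximum of the summand norms, hence $\le 2\varepsilon\|T\|$. In your version the $\phi_i$ are not orthogonal, and the sentence ``the finite-overlap property of the cover then bounds the total norm of the overlapping part by $C_{\mathrm{geom}}\cdot L\delta$'' hides a nontrivial almost-orthogonality step (Cotlar--Stein or a colouring argument); the number of overlapping pairs grows like $(R/\delta)^m$, so the triangle inequality alone does not suffice. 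This is fixable, but you should say how.

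There is a genuine gap in your treatment of $(5)\Rightarrow(1)$ and $(4)\Rightarrow(2)$. Conditions (4) and (5) only assert uniform approximability for \emph{one specific} sequence $(L_j)_j$ determined by $L$ (and the geometry), not for every sequence. Your mollification of $g$ at scale $\varepsilon$ produces $g_\varepsilon$ with $\|\nabla^j g_\varepsilon\|_\infty$ of order $L\varepsilon^{1-j}$, and to make $\|g-g_\varepsilon\|_\infty$ small you must send $\varepsilon\to 0$, which forces the higher-derivative bounds to blow up. So your $g_\varepsilon$ need not lie in the family to which (5) applies, and you cannot ``apply (5) to the uniformly-controlled family $\{[T,g_\varepsilon]\}$'' as claimed. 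The paper avoids this by never trying to approximate $g$ itself: it proves $(4)\Rightarrow(3)$ by inserting a smoothed \emph{cut-off} $g''$ with $g''g=g$ and $\operatorname{supp} g''\cap\operatorname{supp} f=\emptyset$, mollified at a \emph{fixed} scale $\sim L/8$ determined only by the separation $L$ in condition (3). Then $fTg=(fTg'')g$, and $fTg''$ lives in the family from (4) with $(L_j)$ independent of the target accuracy. Your route through (2) rather than (3) cannot work directly, because in (2) the supports of $f$ and $g$ are merely disjoint (not uniformly separated), so no fixed-scale cut-off fits between them; this is precisely why the paper's chain is $(5)\Rightarrow(4)\Rightarrow(3)\Rightarrow(1)$.
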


\begin{proof}
\bm{$1 \Rightarrow 2$}\textbf{:} Let $f \in B_b(M)$ with $\|f \|_\infty \le 1$ and $g \in \LLip_R(M)$ have disjoint supports, i.e., $\supp f \cap \supp g = \emptyset$. From the latter we conclude $f T g = f [T,g]$, from which the claim follows (because $T$ is uniformly pseudolocal and because the operator norm of multiplication with $f$ is $\le 1$). Of course such an argument also works with the roles of $f$ and $g$ changed.

\bm{$2 \Rightarrow 3$}\textbf{:} Let $f \in B_b(M)$ with $\|f \|_\infty \le 1$ and $g \in B_R(M)$ with $d(\supp f, \supp g) \ge L$. We define $g^\prime(x) := \max\big( 0, 1 - 1/L \cdot d(x, \supp g) \big) \in 1/L\text{-}\operatorname{Lip}_{R+2L}(M)$. Since $g^\prime g = g$, the claim follows from writing $f T g = f T g^\prime g$ and because multiplication with $g$ has operator norm $\le 1$, and we of course also may change the roles of $f$ and $g$.

\bm{$3 \Rightarrow 1$}\textbf{:} Let $f \in \LLip_R(M)$. For given $\varepsilon > 0$ we partition the range of $f$ into a sequence of non-overlapping half-open intervals $U_1, \ldots, U_n$, each having diameter less than $\varepsilon$, such that $\overline{U_i}$ intersects $\overline{U_j}$ if and only if $|i - j| \le 1$. Denoting by $\chi_i$ the characteristic function of $f^{-1}(U_i)$, we get that $\chi_i \in B_R(M)$ if $0 \notin U_i$, since the support of $f$ has diameter less than or equal to $R$, and furthermore $d(\supp \chi_i, \supp \chi_j) \ge \tfrac{\varepsilon}{L}$ if $|i-j| > 1$, since $f$ is $L$-Lipschitz.

By Point 3 we have that the collections $\{\chi_i T \chi_j, \chi_j T \chi_i\}$ are uniformly approximable for all $i,j$ with $|i-j| > 1$. Choosing points $x_1, \ldots, x_n$ from $f^{-1}(U_1), \ldots, f^{-1}(U_n)$ and defining $f^\prime := f(x_1) \chi_1 + \cdots + f(x_n) \chi_n$, we get $\|f - f^\prime\|_\infty < \varepsilon$. The operator $[T,f]$ is $2\varepsilon \|T\|$-away from $[T, f^\prime]$, and since $\chi_1 + \cdots + \chi_n = 1$ we have
\[Tf^\prime - f^\prime T = \sum_{i,j} \chi_j T f(x_i)\chi_i - f(x_j) \chi_j T \chi_i.\]
Since we already know that $\{\chi_i T \chi_j, \chi_j T \chi_i\}$ are uniformly approximable for all $i,j$ with $|i-j| > 1$, it remains to treat the sum (note that the summand for $i=j$ is zero)
\[\sum_{|i-j|=1} \chi_j T f(x_i)\chi_i - f(x_j) \chi_j T \chi_i = \sum_{|i-j|=1} \big( f(x_i) - f(x_j) \big) \chi_j T \chi_i.\]
We split the sum into two parts, one where $i = j+1$ and the other one where $i=j-1$. The first part takes the form
\[\sum_j \big( f(x_{j+1}) - f(x_j) \big) \chi_j T \chi_{j+1},\]
i.e., is a direct sum of operators from $\chi_{j+1} \cdot L^2(E)$ to $\chi_j \cdot L^2(E)$. Therefore its norm is the maximum of the norms of its summands. But the latter are $\le 2 \varepsilon \|T\|$ since $|f(x_{j+1}) - f(x_j)| \le 2\varepsilon$. We treat the second part of the sum in the above display the same way and conclude that the sum in the above display is in norm $\le 4\varepsilon T$. Putting it all together it follows that $T$ is the operator norm limit of uniformly pseudolocal operators, from which it follows that $T$ itself is uniformly pseudolocal (it is proved in \cite[Lemma 4.2]{spakula_uniform_k_homology} that the uniformly pseudolocal operators are closed in operator norm, as are also the uniformly locally compact ones).

\bm{$2 \Rightarrow 4$}\textbf{:} Clear. We have to set $L_1 := L$ and the other values $L_{j \ge 2}$ do not matter (i.e., may be set to something arbitrary).

\bm{$4 \Rightarrow 3$}\textbf{:} This is similar to the proof of $2 \Rightarrow 3$, but we have to smooth the function $g^\prime$ constructed there. Let us make this concrete, i.e., let $f \in B_b(M)$ with $\|f \|_\infty \le 1$ and $g \in B_R(M)$ with $d(\supp f, \supp g) \ge L$ be given. We define
\[g^\prime(x) := \max\big( 0, 1 - 2 / L \cdot d(x, B_{L / 4}(\supp g)) \big) \in 2/L\text{-}\operatorname{Lip}_{R+3L/2}(M).\]
Note that $g^\prime \equiv 1$ on $B_{L/4}(\supp g)$ and $g^\prime \equiv 0$ outside $B_{3L/4}(\supp g)$. We cover $M$ by normal coordinate charts and choose a ``nice'' subordinate partition of unity $\varphi_i$ as in Lemma \ref{lem:nice_coverings_partitions_of_unity}. If $\psi$ is now a mollifier on $\IR^m$ supported in $B_{L/8}(0)$, we apply it in every normal coordinate chart to $\varphi_i g^\prime$ and reassemble then all the mollified parts of $g^\prime$ again to a (now smooth) function $g^\prime{}^\prime$ on $M$. This function $g^\prime{}^\prime$ is now supported in $B_{7L/8}(\supp g)$, and is constantly $1$ on $B_{L/8}(\supp g)$. So $f T g = f T g^\prime{}^\prime g$ from which we may conclude the uniform approximability of the collection $\{f T g\}$ for $f$ and $g$ satisfying $f \in B_b(M)$ with $\|f \|_\infty \le 1$ and $g \in B_R(M)$ with $d(\supp f, \supp g) \ge L$. Note that the constants $L_j$ appearing in $\|\nabla^j g^\prime{}^\prime\|_\infty \le L_j$ depend on $L$, $\varphi_i$ and $\psi$, but not on $f$, $g$ or $R$. The dependence on $\varphi_i$ and $\psi$ is ok, since we may just fix a particular choice of them (note that the choice of $\psi$ also depends on $L$), and the dependence on $L$ is explicitly stated in the claim.

Of course we may also change the roles of $f$ and $g$ in this argument.

\bm{$5 \Rightarrow 4$}\textbf{:} Clear. We just have to write $fTg = f[T,g]$ and analogously for $gTf$.

\bm{$1 \Rightarrow 5$}\textbf{:} Clear.
\end{proof}

With the above lemma at our disposal we may now prove the following proposition.

\begin{prop}\label{prop:PDO_order_0_l-uniformly-pseudolocal}
Let $P \in \UPsiDO^0(E)$. Then $P$ is uniformly pseudolocal.
\end{prop}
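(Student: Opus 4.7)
The plan is to verify condition~5 of Lemma~\ref{lem:kasparov_lemma_uniform_approx_manifold}. Given $L > 0$, I set $L_1 := L$ and let $L_j$ for $j \ge 2$ be any fixed positive constants. Fix $R > 0$; I must show that the collection
\[\mathcal A := \{[P, g] \mid g \in B_R(M) \cap C_b^\infty(M),\ \|\nabla^j g\|_\infty \le L_j \text{ for all } j\ge 1\}\]
is uniformly approximable. By Proposition~\ref{prop:PsiDOs_filtered_algebra}, multiplication by $g$ is a uniform PDO of order $0$ whose (scalar) symbol commutes with that of $P$, whence $[P, g] \in \UPsiDO^{-1}(E)$. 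Examining the symbol of the commutator via the standard composition formula, the constants $C^{\alpha\beta}$ entering the Uniformity Condition~\eqref{eq:uniformity_defn_PDOs} for $[P, g]$ are bounded above by fixed polynomials in those for $P$ and in finitely many of the $L_j$; hence by Remark~\ref{rem:bound_operator_norm_PDO} there is a uniform bound $\|[P, g]\|_{L^2 \to H^1} \le N_0$ for all $g \in \mathcal A$.

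Next I use the quasi-locality of $P$ to localize the commutator near $\supp g$. The operator $P$ is quasi-local on $L^2(E)$, as $P = P_{-\infty} + \sum_i P_i$ with $P_{-\infty}$ quasi-local smoothing and $\sum_i P_i$ of uniformly bounded finite propagation; let $\mu_P$ be a dominating function. For $\varepsilon > 0$ choose $r_\varepsilon$ with $\mu_P(r_\varepsilon) < \varepsilon/4$, set $S_g := B_{r_\varepsilon}(\supp g)$, and let $\chi_g$ be its characteristic function. Since $\supp g \subset S_g$, direct computation gives
\[(1-\chi_g)[P, g] u = (1-\chi_g) P(g u), \qquad [P, g](1-\chi_g) u = -g\, P\bigl((1-\chi_g) u\bigr),\]
and applying the quasi-local estimate for $P$ to each (using that $\supp(gu) \subset \supp g$ and that $\supp((1-\chi_g)u)$ has distance $\ge r_\varepsilon$ from $\supp g$) yields $\|(1-\chi_g)[P, g]\|,\ \|[P, g](1-\chi_g)\| \le \mu_P(r_\varepsilon)$. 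Combining,
\[\bigl\|[P, g] - \chi_g [P, g] \chi_g\bigr\| \le 2\mu_P(r_\varepsilon) < \varepsilon/2,\]
uniformly in $g \in \mathcal A$.

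It remains to approximate $\chi_g [P, g] \chi_g$ uniformly by finite-rank operators of rank independent of $g$. Since $\diam S_g \le R + 2 r_\varepsilon$, I follow the template of the proof of Proposition~\ref{prop:quasilocal_negative_order_uniformly_locally_compact} and factor
\[\chi_g [P, g] \chi_g \colon L^2(E) \xrightarrow{\chi_g} L^2(E|_{S_g}) \xrightarrow{[P, g]} H^1(E|_{S_g}) \hookrightarrow L^2(E|_{S_g}) \hookrightarrow L^2(E),\]
where the middle arrow is bounded by $N_0$ (after extending by zero and restricting the image to $S_g$) and the others have norm $\le 1$. The compact Rellich--Kondrachov embedding $H^1(E|_{S_g}) \hookrightarrow L^2(E|_{S_g})$ is uniformly approximable by finite-rank operators whose rank depends only on the diameter $R + 2 r_\varepsilon$, not on the position of $S_g$, thanks to the bounded geometry of $M$ and $E$. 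This yields a finite-rank $k_g$ with $\|\chi_g [P, g] \chi_g - k_g\| < \varepsilon/2$ and rank uniformly bounded in $g$, giving $\|[P, g] - k_g\| < \varepsilon$ as required.

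The main technical difficulty is the uniformity bookkeeping: ensuring that the symbol bounds on $[P, g]$, the operator norm $N_0$, and the rate of uniform Rellich--Kondrachov approximation are all independent of $g \in \mathcal A$ once the $L_j$ are fixed. Each of these ultimately traces back either to the Uniformity Condition~\eqref{eq:uniformity_defn_PDOs} for $P$ propagating cleanly through the commutator symbol calculus, or to bounded geometry making local analytic estimates translation-invariant up to uniform constants.
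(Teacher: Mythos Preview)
Your proof is correct and follows essentially the same strategy as the paper: reduce to the commutator $[P,g]$, observe it is a uniform $\Psi$DO of order $-1$ with symbol bounds (and hence $L^2 \to H^1$ norm) controlled uniformly in $g$, then localise to a ball of controlled diameter and invoke the uniform Rellich--Kondrachov embedding. The paper differs only in two inessential presentational choices: it first strips off $P_{-\infty}$ and works with a finite-propagation $P$ (so the localisation is automatic), whereas you keep $P$ quasilocal and cut off explicitly via $\chi_g[P,g]\chi_g$; and it verifies Point~4 of Lemma~\ref{lem:kasparov_lemma_uniform_approx_manifold} rather than Point~5. Your direct handling of the quasilocal tail is a minor extra step but perfectly sound, and your factorisation through $H^1(E|_{S_g})$ (extend by zero, apply $[P,g]$, restrict) is exactly the mechanism used in the proof of Proposition~\ref{prop:quasilocal_negative_order_uniformly_locally_compact}.
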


\begin{proof}
Writing $P = P_{-\infty} + \sum_i P_i$ with $P_{-\infty} \in \IU(E)$, we may without loss of generality assume that $P$ has finite propagation $R^\prime$ (since $P_{-\infty}$ is uniformly locally compact by the above Corollary \ref{cor:quasilocal_neg_order_uniformly_locally_compact} and uniformly locally compact operators are uniformly pseudolocal).

We will use the equivalent characterization in Point 4 of the above lemma: let $R, L > 0$ and the corresponding sequence $(L_j)_{j \in \IN}$ be given. We have to show that
\begin{align*}
\{ f P g, g P f \ | \ & f \in B_b(M)\text{ with }\|f\|_\infty \le 1,\\
& g \in B_R(M) \cap C_b^\infty(M)\text{ with }\|\nabla^j g\|_\infty \le L_j,\text{ and}\\
& \supp f \cap \supp g = \emptyset\}
\end{align*}
is uniformly approximable for all $R, L > 0$.

We have
\[f P g = f \chi_{B_{R^\prime}(\supp g)} P g = f \chi_{B_{R^\prime}(\supp g)} [P, g]\]
since the supports of $f$ and $g$ are disjoint.

With Proposition \ref{prop:PsiDOs_filtered_algebra} we conclude that multiplication with $g$ is a uniform pseudodifferential operator of order $0$ (since $g \in C_b^\infty(M)$) and furthermore, that the commutator $[P, g]$ is a pseudodifferential operator of order $-1$. Therefore, by Corollary \ref{prop:PsiDOs_negative_order_uniformly_locally_compact}, we know that the set $\{f \chi_{B_{R^\prime}(\supp g)} [P, g] \ | \ f \in B_R(M)\}$ is uniformly approximable. So we conclude that our operators $f[P,g]$ have the needed uniformity in the functions $f$.

It remains to show that we also have the needed uniformity in the functions $g$. Writing $P = \sum_i P_i$\footnote{Recall that we assumed without loss of generality that there is no $P_{-\infty}$.}, we get $[P,g] = \sum_i [P_i,g]$. Now each $[P_i, g]$ is a uniform pseudodifferential operator of order $-1$, their supports\footnote{Recall that an operator $P$ is \emph{supported in a subset $K$}, if $\supp Pu \subset K$ for all $u$ in the domain of $P$ and if $Pu = 0$ whenever we have $\supp u \cap K = \emptyset$.} depend only on the propagation of $P$ and on the value of $R$ (but not on $i$ nor on the concrete choice of $g$) and their operator norms as maps $L^2(E) \to H^1(E)$ are bounded from above by a constant that only depends on $P$, on $R$ and on the values of all the $L_j$ (but again, neither on $i$ nor on $g$). The last fact follows from a combination of Remark \ref{rem:bound_operator_norm_PDO} together with the estimates on the symbols of the $[P_i,g]$ that we get from the proof that they are uniform pseudodifferential operators of order $-1$. So examining the proof of Proposition \ref{prop:quasilocal_negative_order_uniformly_locally_compact} more closely, we see that these properties suffice to conclude the needed uniformity of $f[P,g]$ in the functions $g$.

The operators $g P f$ may be treated analogously.
\end{proof}

\section{Elliptic operators}\label{secio23ed}
In this section we will define the notion of ellipticity\footnote{It is actually \emph{uniform ellipticity} that we define here. But since non-uniform ellipticity is not a natural notion for uniform pseudodifferential operators, we just call it \emph{ellipticity} what we define.} for uniform pseudodifferential operators and discuss important consequences of it (elliptic regularity, fundamental elliptic estimates and essential self-adjointness). Most of the results are already known and can be found in the literature (at least in the case of finite propagation operators). We nevertheless include a discussion of them so that our exposition here is self-contained.

Let $\pi^\ast E$ and $\pi^\ast F$ denote the pull-back bundles of $E$ and $F$ to the cotangent bundle $\pi\colon T^\ast M \to M$ of the $m$-dimensional manifold $M$.

\begin{defn}[Symbols]\label{defnnkdf893}
Let $p$ be a section of the bundle $\Hom(\pi^\ast E, \pi^\ast F)$ over $T^\ast M$. We call $p$ a \emph{symbol of order $k \in \IZ$}, if the following holds: choosing a uniformly locally finite covering $\{ B_{2 \varepsilon}(x_i) \}$ of $M$ through normal coordinate balls and corresponding subordinate partition of unity $\{ \varphi_i \}$ as in Lemma \ref{lem:nice_coverings_partitions_of_unity}, and choosing synchronous framings of $E$ and $F$ in these balls $B_{2\varepsilon}(x_i)$, we can write $p$ as a uniformly locally finite sum $p = \sum_i p_i$, where $p_i(x,\xi) := p(x,\xi) \varphi(x)$ for $x \in M$ and $\xi \in T^\ast_x M$, and interpret each $p_i$ as a matrix-valued function on $B_{2 \varepsilon}(x_i) \times \IC^m$. Then for all multi-indices $\alpha$ and $\beta$ there must exist a constant $C^{\alpha \beta} < \infty$ such that for all $i$ and all $x, \xi$ we have
\begin{equation}\label{eq:symbol_uniformity}
\|D^\alpha_x D^\beta_\xi p_i(x,\xi) \| \le C^{\alpha \beta}(1 + |\xi|)^{k - |\beta|}.
\end{equation}
We denote the vector space all symbols of order $k \in \IZ$ by $\Symb^k(E,F)$.
\end{defn}

From Lemma \ref{lem:transition_functions_uniformly_bounded} and Lemma \ref{lem:equiv_characterizations_bounded_geom_bundles} we conclude that the above definition of symbols does neither depend on the chosen uniformly locally finite covering of $M$ through normal coordinate balls, nor on the subordinate partition of unity (as long as the functions $\{\varphi_i\}$ have uniformly bounded derivatives), nor on the synchronous framings of $E$ and $F$.

If all the choices above are fixed, we immediately see from the definition of uniform pseudodifferential operators that $P \in \UPsiDO^k(E,F)$ has a symbol $p \in \Symb^k(E,F)$. Analogously as in the case of compact manifolds,\footnote{see, e.g., \cite[Theorem III.§3.19]{lawson_michelsohn}} we may show that if we make other choices for the coordinate charts, subordinate partition of unity and synchronous framings, the symbol $p$ of $P$ changes by an element of $\Symb^{k-1}(E,F)$. So $P$ has a well-defined principal symbol class $[p] \in \Symb^k(E,F) / \Symb^{k-1}(E,F) =: \Symb^{k-[1]}(E,F)$.

\begin{defn}[Elliptic symbols]\label{defn:elliptic_symbols}
Let $p \in \Symb^k(E,F)$. Recall that $p$ is a section of the bundle $\Hom(\pi^\ast E, \pi^\ast F)$ over $T^\ast M$. We will call $p$ \emph{elliptic}, if there is an $R > 0$ such that $p|_{| \xi | > R}$\footnote{This notation means the following: we restrict $p$ to the bundle $\Hom(\pi^\ast E, \pi^\ast F)$ over the space $\{(x,\xi) \in T^\ast M \ | \ |\xi| > R\} \subset T^\ast M$.} is invertible and this inverse $p^{-1}$ satisfies the Inequality \eqref{eq:symbol_uniformity} for $\alpha, \beta = 0$ and order $-k$ (and of course only for $|\xi| > R$ since only there the inverse is defined). Note that as in the compact case it follows that $p^{-1}$ satisfies the Inequality \eqref{eq:symbol_uniformity} for all multi-indices $\alpha$, $\beta$.
\end{defn}

The proof of the following lemma is straight-forward.

\begin{lem}\label{lem:ellipticity_independent_of_representative}
If $p \in \Symb^k(E,F)$ is elliptic, then every other representative $p^\prime$ of the class $[p] \in \Symb^{k-[1]}(E,F)$ is also elliptic.
\end{lem}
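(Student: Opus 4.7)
The plan is to exploit the standard fact that a lower-order perturbation of an invertible leading term remains invertible at infinity, via a Neumann series argument that is uniform thanks to the symbol estimates built into Definition~\ref{defnnkdf893}.

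First I would write $p' = p + q$ with $q \in \Symb^{k-1}(E,F)$, and factor pointwise for $|\xi| > R$ (the radius outside which $p$ is invertible) as $p'(x,\xi) = p(x,\xi)\bigl(\id + p(x,\xi)^{-1} q(x,\xi)\bigr)$. The ellipticity of $p$ gives the pointwise bound $\|p(x,\xi)^{-1}\| \le C_0 (1+|\xi|)^{-k}$ for $|\xi| > R$, and the symbol estimate for $q$ gives $\|q(x,\xi)\| \le C'_0(1+|\xi|)^{k-1}$, both with constants that are uniform in $x \in M$. Multiplying these estimates yields
\[
\|p(x,\xi)^{-1} q(x,\xi)\| \le C_0 C'_0 \, (1+|\xi|)^{-1}
\]
for all $x \in M$ and all $|\xi| > R$, again with a constant independent of $x$.

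Next I would choose $R' \ge R$ large enough that $C_0 C'_0 (1+|\xi|)^{-1} \le \tfrac12$ for $|\xi| > R'$. On this region the Neumann series $\sum_{j \ge 0} (-p^{-1}q)^j$ converges pointwise in operator norm to an inverse of $\id + p^{-1}q$ with norm bounded by $2$, uniformly in $x$. Hence $p'(x,\xi)$ is invertible for $|\xi| > R'$, with
\[
(p')^{-1}(x,\xi) = \bigl(\id + p(x,\xi)^{-1} q(x,\xi)\bigr)^{-1} p(x,\xi)^{-1},
\]
and $\|(p')^{-1}(x,\xi)\| \le 2 C_0 (1+|\xi|)^{-k}$. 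This is precisely the Inequality~\eqref{eq:symbol_uniformity} for $\alpha = \beta = 0$ in order $-k$ required by Definition~\ref{defn:elliptic_symbols}, with a constant independent of $x \in M$. By the final sentence of that definition, the bounds for all higher multi-indices then follow automatically, so $p'$ is elliptic.

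The only genuine content is the uniformity in $x$; there is no real obstacle, since both the ellipticity bound on $p^{-1}$ and the symbol bound on $q$ are by definition uniform in the basepoint, so the threshold $R'$ depends only on $p$ and $q$, not on $x$. The computation is pointwise in $(x,\xi)$ and purely algebraic, so no further analysis on $M$ is needed.
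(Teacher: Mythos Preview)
Your argument is correct and is exactly the standard Neumann-series perturbation argument that the paper has in mind when it simply declares the proof ``straight-forward'' and omits it. There is nothing to add: the paper gives no proof of its own to compare against, and your write-up fills in precisely the routine details, including the only point that deserves comment (uniformity of the threshold $R'$ in $x$).
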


Due to the above lemma we are now able to define what it means for a pseudodifferential operator to be elliptic:

\begin{defn}[Elliptic $\UPsiDO$s]\label{defn:elliptic_operator}
Let $P \in \UPsiDO^k(E,F)$. We will call $P$ \emph{elliptic}, if its principal symbol $\sigma(P)$ is elliptic.
\end{defn}

The importance of elliptic operators lies in the fact that they admit an inverse modulo operators of order $-\infty$. We may prove this analogously as in the case of pseudodifferential operators defined over a compact manifold. See also \cite[Theorem 3.3]{kordyukov} where Kordyukov proves the existence of parametrices for his class of pseudodifferential operators (which coincides with our class with the additional requirement that the operators must have finite propagation).

\begin{thm}[Existence of parametrices]
Let $P \in \UPsiDO^k(E,F)$ be elliptic.

Then there exists an operator $Q \in\UPsiDO^{-k}(F, E)$ such that
\[PQ = \id - S_1 \text{ and }QP = \id - S_2,\]
where $S_1 \in \UPsiDO^{-\infty}(F)$ and $S_2 \in \UPsiDO^{-\infty}(E)$.
\end{thm}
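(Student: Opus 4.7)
The plan is to follow the standard parametrix construction (asymptotic Neumann series), taking care that every step is compatible with the uniformity condition \eqref{eq:uniformity_defn_PDOs} and with the class $\IU$ appearing as the $-\infty$ part.

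\textbf{Step 1: Construction of a zeroth order approximation.} Since $P$ is elliptic, by Definition~\ref{defn:elliptic_symbols} its principal symbol $\sigma(P)$ is invertible for $|\xi|>R$ with inverse satisfying \eqref{eq:symbol_uniformity} in order $-k$. Choose a smooth cutoff $\chi\colon [0,\infty)\to[0,1]$ with $\chi\equiv 0$ on $[0,R]$ and $\chi\equiv 1$ on $[2R,\infty)$, and set
\[q_0(x,\xi) := \chi(|\xi|)\,\sigma(P)(x,\xi)^{-1}.\]
Because the ellipticity estimate is uniform in $x$, $q_0\in\Symb^{-k}(F,E)$. Fix the uniformly locally finite cover $\{B_{2\varepsilon}(x_i)\}$ and partition of unity $\{\varphi_i\}$ from Lemma \ref{lem:nice_coverings_partitions_of_unity}, together with synchronous framings, and quantize $q_0\varphi_i$ in each chart by the standard formula on $\IR^m$. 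Summing these local quantizations gives an operator $Q_0\in\UPsiDO^{-k}(F,E)$; the uniformity condition for $Q_0$ follows from the uniform bounds on $q_0$ together with the uniform boundedness of the $\varphi_i$ and their derivatives.

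\textbf{Step 2: Neumann iteration with asymptotic summation.} By Proposition~\ref{prop:PsiDOs_filtered_algebra} the product $PQ_0$ lies in $\UPsiDO^0(F)$, and by construction its principal symbol equals that of the identity, so $R_0 := \id - PQ_0 \in \UPsiDO^{-1}(F)$. Iterating, $R_0^{\,j}\in\UPsiDO^{-j}(F)$. The formal series $\id + R_0 + R_0^2 + \cdots$ is not convergent, but one may perform an asymptotic summation: pick a Borel-type resummation at the symbolic level, chart by chart with the fixed $\{\varphi_i\}$, and assemble the result into an operator $R\in\UPsiDO^{-1}(F)$ with $R - \sum_{j=1}^N R_0^{\,j} \in \UPsiDO^{-(N+1)}(F)$ for every $N$. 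Setting $Q := Q_0(\id + R) \in \UPsiDO^{-k}(F,E)$, one checks
\[PQ = (\id - R_0)(\id + R) = \id - \bigl(R_0 + R_0 R - R\bigr),\]
and the expression in parentheses lies in $\UPsiDO^{-N}(F)$ for every $N$. By Lemma \ref{lem:PDO_-infinity_equal_quasilocal_smoothing}, $\UPsiDO^{-\infty}(F)=\IU(F)$, so the remainder is a quasilocal smoothing operator $S_1 \in \IU(F)$.

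\textbf{Step 3: From right to two-sided parametrix.} An entirely symmetric argument, starting instead from $Q_0 P = \id - R_0'$ with $R_0'\in\UPsiDO^{-1}(E)$ and asymptotically summing on the left, produces $Q'\in\UPsiDO^{-k}(F,E)$ with $Q'P = \id - S_2'$, $S_2'\in \IU(E)$. Then
\[Q' - Q = Q'(PQ + S_1) - (Q'P + S_2')Q = Q' S_1 - S_2' Q \in \IU(F,E),\]
using that $\IU$ is an ideal in $\UPsiDO^\ast$. Hence $Q$ itself is a left parametrix with $QP = \id - S_2$ for some $S_2 \in \IU(E)=\UPsiDO^{-\infty}(E)$, as required.

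\textbf{Main obstacle.} The genuinely nontrivial point is Step~2: the asymptotic summation must be carried out so that the resulting symbol still satisfies the uniformity condition \eqref{eq:uniformity_defn_PDOs} globally, not just chart by chart. The classical Borel-summation argument picks a sequence of cutoffs $\chi_j(\xi/t_j)$ with $t_j\to\infty$ chosen fast enough to tame the derivative estimates of the $j$-th term; here $t_j$ must be chosen using the \emph{common} constants $C^{\alpha\beta}$ provided by the uniformity of the $R_0^{\,j}$ across all charts, which is legitimate precisely because the cover and partition of unity were fixed uniformly via Lemma~\ref{lem:nice_coverings_partitions_of_unity} and symbolic composition in Proposition~\ref{prop:PsiDOs_filtered_algebra} preserves uniform symbol bounds. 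Once this choice is made, the smoothing remainder is automatically quasilocal (it is a $\UPsiDO^{-\infty}$-operator, and Lemma~\ref{lem:PDO_-infinity_equal_quasilocal_smoothing} identifies this with $\IU$), so no additional work is needed to land in $\IU$ rather than in a larger class of smoothing operators.
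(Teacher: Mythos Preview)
Your proposal is correct and is exactly the standard parametrix construction (zeroth approximation from the inverted symbol, asymptotic Neumann series via Borel summation, and the usual left/right comparison), carried out with attention to the uniformity condition; this is precisely what the paper means when it says the proof is ``analogous to the compact case'' and cites Kordyukov's Theorem~3.3 rather than writing out a proof. Your identification of the one genuinely new point---choosing the Borel cutoffs $t_j$ using the \emph{common} constants $C^{\alpha\beta}$ supplied by uniformity---is the right thing to flag, and your Step~3 algebra is clean.
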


Using parametrices, we can prove a lot of the important properties of elliptic operators, e.g., \emph{elliptic regularity} (which is a converse to Proposition \ref{prop:Pu_smooth_if_u_smooth} and a proof of it may be found in, e.g. \cite[Theorem III.§4.5]{lawson_michelsohn}):

\begin{thm}[Elliptic regularity]\label{thm:elliptic_regularity}
Let $P \in \UPsiDO^k(E,F)$ be elliptic and let furthermore $u \in H^s(E)$ for some $s \in \IZ$.

Then, if $Pu$ is smooth on an open subset $U \subset M$, $u$ is already smooth on $U$. Furthermore, for $k > 0$: if $Pu = \lambda u$ on $U$ for some $\lambda \in \IC$, then $u$ is smooth on $U$.
\end{thm}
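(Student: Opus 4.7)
The plan is to reduce to the existence of a parametrix, as is standard in the compact case, and then to localize to $U$ via a cut-off argument that rests on the pseudolocality of uniform pseudodifferential operators (Proposition~\ref{prop:Pu_smooth_if_u_smooth}). Concretely, I would take a parametrix $Q \in \UPsiDO^{-k}(F,E)$ for $P$ as furnished by the preceding theorem, so that $QP = \id - S_2$ with $S_2 \in \UPsiDO^{-\infty}(E) = \IU(E)$ (Lemma~\ref{lem:PDO_-infinity_equal_quasilocal_smoothing}). Writing
\[ u = Q(Pu) + S_2 u, \]
the term $S_2 u$ lies in $H^\infty(E)$ since every element of $\IU(E)$ is continuous from $H^{-\infty}_\iota(E)$ to $H^\infty(E)$, and by the Sobolev Embedding Theorem $S_2 u \in C_b^\infty(E)$ is smooth on all of $M$. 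It therefore suffices to show that $Q(Pu)$ is smooth on $U$.

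Next, I would fix an arbitrary $x_0 \in U$ and choose a cut-off $\phi \in C_c^\infty(U)$ with $\phi \equiv 1$ on some open neighbourhood $V \ni x_0$. Splitting $Pu = \phi \cdot Pu + (1 - \phi) \cdot Pu$, the first piece satisfies $\phi \cdot Pu \in C_c^\infty(F) \subset H^\infty(F)$ (since $Pu$ is smooth on $U \supset \supp \phi$), so $Q(\phi \cdot Pu) \in H^\infty(E) \subset C_b^\infty(E)$ is globally smooth. For the second piece, multiplication by $1 - \phi$ is a uniform pseudodifferential operator of order $0$ by Proposition~\ref{prop:PsiDOs_filtered_algebra}, hence $(1 - \phi) \cdot Pu$ lies in $H^{s-k}(F)$; moreover it vanishes on $V$ and is therefore trivially smooth there. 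Applying Proposition~\ref{prop:Pu_smooth_if_u_smooth} to the uniform pseudodifferential operator $Q$, I conclude that $Q\bigl((1 - \phi) \cdot Pu\bigr)$ is smooth on $V$. Combining the three contributions, $u$ is smooth on $V$, and since $x_0 \in U$ was arbitrary, $u$ is smooth on all of $U$.

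For the eigenvalue statement, the strategy is to reduce to the first part. When $k > 0$, the operator $P - \lambda \cdot \id$ is again in $\UPsiDO^k(E)$, because multiplication by the constant $\lambda$ is a uniform pseudodifferential operator of order $0$ by Proposition~\ref{prop:PsiDOs_filtered_algebra} and hence contributes only a lower-order term. Its principal symbol coincides with that of $P$, so it inherits ellipticity. The hypothesis $Pu = \lambda u$ on $U$ now reads $(P - \lambda \cdot \id) u \equiv 0$ on $U$, which is trivially smooth there, and the first part of the theorem applied to $P - \lambda \cdot \id$ yields the desired smoothness of $u$ on $U$.

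There is no serious conceptual obstacle here; once the parametrix exists and pseudolocality is in hand the argument is essentially formal. The one point requiring a little care in the non-compact setting is that the cut-off $1 - \phi$ is not compactly supported, so one must invoke the $C_b^\infty$-boundedness of multiplication (supplied by Proposition~\ref{prop:PsiDOs_filtered_algebra}) rather than the naive compactly supported version in order to control $(1-\phi) \cdot Pu$ in $H^{s-k}(F)$.
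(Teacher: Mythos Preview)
Your argument is correct and is precisely the standard parametrix-plus-pseudolocality proof; the paper does not supply its own proof but refers to \cite[Theorem III.\S4.5]{lawson_michelsohn}, where exactly this route is taken. The only remark is that your care with $1-\phi \in C_b^\infty(M)$ is the right adaptation to the non-compact setting, and your reduction of the eigenvalue case to the first part via $P-\lambda\cdot\id$ having the same principal symbol is the intended one.
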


Later we will also need the following \emph{fundamental elliptic estimate} (the proof from \cite[Theorem III.§5.2(iii)]{lawson_michelsohn} generalizes directly):

\begin{thm}[Fundamental elliptic estimate]\label{thm:elliptic_estimate}
Let $P \in \UPsiDO^k(E,F)$ be elliptic. Then for each $s \in \IZ$ there is a constant $C_s > 0$ such that
\[\|u\|_{H^s(E)} \le C_s\big(\|u\|_{H^{s-k}(E)} + \|Pu\|_{H^{s-k}(F)}\big)\]
for all $u \in H^s(E)$.
\end{thm}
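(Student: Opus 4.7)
The plan is to derive the estimate directly from the parametrix constructed in the preceding theorem. Since $P \in \UPsiDO^k(E,F)$ is elliptic, that result supplies operators $Q \in \UPsiDO^{-k}(F,E)$ and $S_2 \in \UPsiDO^{-\infty}(E) = \IU(E)$ with $QP = \id - S_2$. Consequently every $u \in H^s(E)$ satisfies the identity
\[ u = QPu + S_2 u, \]
so it suffices to bound each of the two summands on the right by a constant multiple of the right-hand side of the claimed estimate.

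For the first summand, Proposition \ref{prop:pseudodiff_extension_sobolev} applied to $Q$ supplies a continuous extension $Q \colon H^{s-k}(F) \to H^s(E)$; denote its operator norm by $C^{(1)}_s$. Since the same proposition applied to $P$ ensures $Pu \in H^{s-k}(F)$, I get $\|QPu\|_{H^s(E)} \le C^{(1)}_s \|Pu\|_{H^{s-k}(F)}$. For the second summand, the element $S_2 \in \IU(E)$ is in particular a smoothing operator, and Lemma \ref{lem:smoothing_operator_iff_bounded} together with the continuous inclusions $H^s(E) \hookrightarrow H^{-\infty}_\iota(E)$ and $H^\infty(E) \hookrightarrow H^t(E)$ (valid for any $s,t \in \IZ$) yields a continuous extension $S_2 \colon H^{s-k}(E) \to H^s(E)$ with some operator norm $C^{(2)}_s$. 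Thus $\|S_2 u\|_{H^s(E)} \le C^{(2)}_s \|u\|_{H^{s-k}(E)}$.

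Setting $C_s := \max(C^{(1)}_s, C^{(2)}_s)$ and combining the two inequalities gives precisely
\[ \|u\|_{H^s(E)} \le C_s \bigl(\|u\|_{H^{s-k}(E)} + \|Pu\|_{H^{s-k}(F)}\bigr). \]
There is no genuine obstacle in this argument: once the existence of the parametrix and the Sobolev mapping properties of uniform pseudodifferential and smoothing operators are in place, the proof is a direct transcription of the compact-manifold argument of \cite[Theorem III.§5.2(iii)]{lawson_michelsohn}, precisely as the remark preceding the statement announces. If anything deserves a moment of care, it is simply the observation that a quasilocal smoothing operator extends continuously between Sobolev spaces of arbitrary integer orders, but this was already built into our setup when defining $\IU(E)$ in Section \ref{sec:quasiloc}.
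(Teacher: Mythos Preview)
Your argument is correct and is precisely the parametrix proof of \cite[Theorem III.§5.2(iii)]{lawson_michelsohn} that the paper invokes; the paper gives no further proof beyond that reference, so your write-up is exactly the intended generalization.
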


Another implication of ellipticity is that symmetric\footnote{This means that we have $\langle Pu, v\rangle_{L^2(E)} = \langle u, Pv\rangle_{L^2(E)}$ for all $u,v \in C_c^\infty(E)$.}, elliptic uniform pseudodifferential operators of positive order are essentially self-adjoint\footnote{Recall that a symmetric, unbounded operator is called \emph{essentially self-adjoint}, if its closure is a self-adjoint operator.}. We need this since we will have to consider functions of uniform pseudodifferential operators. But first we will show that a symmetric and elliptic operator is also symmetric as an operator on Sobolev spaces.

\begin{lem}\label{lem:symmetric_on_Sobolev}
Let $P \in \UPsiDO^k(E)$ with $k \ge 1$ be symmetric on $L^2(E)$ and elliptic. Then $P$ is also symmetric on the Sobolev spaces $H^{lk}(E)$ for $l \in \IZ$, where we use on $H^{lk}(E)$ the scalar product as described in the proof.
\end{lem}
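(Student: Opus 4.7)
My plan is to build the scalar product on $H^{lk}(E)$ from iterated applications of $P$. For $l \ge 0$ I would define, on the dense subspace $C_c^\infty(E) \subset H^{lk}(E)$,
\[
\langle u, v\rangle_{H^{lk}} := \sum_{j=0}^{l} \langle P^j u, P^j v\rangle_{L^2(E)},
\]
and take $H^{lk}(E)$ to be its completion.

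The first task is to check that this scalar product induces a norm equivalent to the standard $H^{lk}$-norm, so that the completion really is $H^{lk}(E)$. The easy inequality $\sum_j \|P^j u\|_{L^2}^2 \le C \|u\|_{H^{lk}}^2$ follows from Proposition~\ref{prop:pseudodiff_extension_sobolev}: each $P^j$ extends continuously to $H^{lk}(E) \to H^{(l-j)k}(E)$, and $H^{(l-j)k}(E)$ embeds into $L^2(E)$ for $0 \le j \le l$. For the reverse inequality I would iterate the fundamental elliptic estimate (Theorem~\ref{thm:elliptic_estimate}), starting from $\|u\|_{H^k} \le C(\|u\|_{L^2} + \|Pu\|_{L^2})$ applied to $P^{l-1}u, P^{l-2}u, \ldots$ and bootstrapping to obtain $\|u\|_{H^{lk}} \le C_l \sum_{j=0}^l \|P^j u\|_{L^2}$.

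With equivalence in hand, the symmetry of $P$ would reduce to the term-by-term identity
\[
\langle P^{j+1} u, P^j v\rangle_{L^2} = \langle P^j u, P^{j+1} v\rangle_{L^2}
\]
for $u, v \in C_c^\infty(E)$. This follows from the $L^2$-symmetry of $P$ applied to the pair $(P^j u, P^j v)$, the main nuance being that these iterated images land in $H^\infty(E)$ rather than in $C_c^\infty(E)$ (since $P$ need not have finite propagation). To handle this I would first extend the $L^2$-symmetry of $P$ from $C_c^\infty(E) \times C_c^\infty(E)$ to $H^\infty(E) \times H^\infty(E)$, using density of $C_c^\infty(E)$ in every $H^s(E)$ on a manifold of bounded geometry without boundary together with the continuity of $P$ on Sobolev spaces (Proposition~\ref{prop:pseudodiff_extension_sobolev}); this is the step that most deserves care.

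For $l < 0$ I would proceed by duality. Writing $l = -m$ with $m > 0$, the identification $H^{-mk}(E) \cong H^{mk}(E)^\ast$ via the extended $L^2$-pairing, together with the Riesz isomorphism with respect to the already-constructed inner product $\langle \cdot, \cdot\rangle_{H^{mk}}$, produces a canonical Hilbert space structure on $H^{-mk}(E)$. Symmetry of $P$ for test sections in $H^{-mk}(E)$ would then follow by dualizing the positive-order computation, with the $L^2$-symmetry of $P$ ensuring that $P$ intertwines appropriately with the duality map. The main obstacle here is that $P$ does not preserve $H^{mk}(E)$ (it only maps into $H^{(m-1)k}(E)$), so the symmetry statement in either case must be interpreted on the common dense domain $C_c^\infty(E) \subset H^{lk}(E)$, where all the bilinear expressions make literal sense and the density arguments above apply.
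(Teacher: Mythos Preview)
Your approach is essentially the same as the paper's: build the $H^{lk}$-scalar product out of powers of $P$ and the $L^2$-pairing, verify norm equivalence via the fundamental elliptic estimate, read off symmetry term by term, and handle $l<0$ by duality. The paper phrases the positive case iteratively (defining $\langle u,v\rangle_{H^{2k},P}$ from $\langle -,-\rangle_{H^k,P}$, etc.), which amounts to your sum with binomial weights; and you are in fact more careful than the paper in flagging that $P^j u \in H^\infty(E)$ rather than $C_c^\infty(E)$, so that the $L^2$-symmetry must first be extended by density.
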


\begin{proof}
Due to the fundamental elliptic estimate the norm $\|u\|_{H^0} + \|Pu\|_{H^0}$ (note that $H^0(E) = L^2(E)$ by definition) on $H^k(E)$ is equivalent to the usual\footnote{We have of course possible choices here, e.g., the global norm \eqref{eq:sobolev_norm} or the local definition \eqref{eq:sobolev_norm_local}, but they are all equivalent to each other since $M$ and $E$ have bounded geometry.} norm $\|u\|_{H^k}$ on it. Now $\|u\|_{H^0} + \|Pu\|_{H^0}$ is equivalent to $\big( \|u\|^2_{H^0} + \|Pu\|^2_{H^0} \big)^{1/2}$ which is induced by the scalar product
\[\langle u, v \rangle_{H^k, P} := \langle u, v \rangle_{H^0} + \langle Pu, Pv \rangle_{H^0}.\]
Since $P$ is symmetric for the $H^0$-scalar product, we immediately see that it is also symmetric for this particular scalar product $\langle -, - \rangle_{H^k,P}$ on $H^k(E)$.

To extend to the Sobolev spaces $H^{lk}(E)$ for $l > 0$ we repeatedly invoke the above arguments, e.g., on $H^{2k}(E)$ we have the equivalent norm $\big( \|u\|^2_{H^k, P} + \|Pu\|^2_{H^k, P} \big)^{1/2}$ (again due to the fundamental elliptic estimate) which is induced by the scalar product $\langle u, v \rangle_{H^k,P} + \langle Pu, Pv \rangle_{H^k,P}$ and now we may use that we already know that $P$ is symmetric with respect to $\langle -, - \rangle_{H^k,P}$.

Finally, for $H^{lk}(E)$ for $l < 0$ we use the fact that they are the dual spaces to $H^{-lk}(E)$ where we know that $P$ is symmetric, i.e., we equip $H^{lk}(E)$ for $l < 0$ with the scalar product induced from the duality: $\langle u, v \rangle_{H^{lk},P} := \langle u^\prime, v^\prime\rangle_{H^{-lk},P}$, where $u^\prime, v^\prime \in H^{-lk}(E)$ are the dual vectors to $u,v \in H^{lk}(E)$ (note that the induced norm on $H^{lk}(E)$ is exactly the operator norm if we regard $H^{lk}(E)$ as the dual space of $H^{-lk}(E)$).
\end{proof}

Now we get to the proof that elliptic and symmetric operators are essentially self-adjoint. Note that if we work with differential operators $D$ of first order on open manifolds we do not need ellipticity for this result to hold, but weaker conditions suffice, e.g., that the symbol $\sigma_D$ of $D$ satisfies $\sup_{x \in M, \|\xi\| = 1} \|\sigma_D(x, \xi)\| < \infty$ (by the way, this condition is incorporated in our definition of uniform pseudodifferential operators by the uniformity condition). But if we want essential self-adjointness of higher order operators, we have to assume stronger conditions (see the counterexample \cite{MO_elliptic_essentially_self_adjoint}).

Note that the following proposition is well-known in the case $l=0$, see Shubin \cite{shubin}. But for us it will be of crucial importance in the next Subsection~\ref{sec:functions_of_PDOs} (see the proof of Lemma~\ref{lem:exp(itP)_quasilocal}) that we also have the statement for all the other cases $l \not= 0$. Furthermore, note that in order for the next proposition to make sense at all we have to invoke the above Lemma~\ref{lem:symmetric_on_Sobolev}.

\begin{prop}[Essential self-adjointness]\label{prop:elliptic_PDO_essentially_self-adjoint}
Let $P \in \UPsiDO^k(E)$ with $k \ge 1$ be elliptic and symmetric. Then the unbounded operator $P\colon H^{lk}(E) \to H^{lk}(E)$ is essentially self-adjoint for all $l \in \IZ$, where we equip these Sobolev spaces with the scalar products as described in the proof of the above Lemma \ref{lem:symmetric_on_Sobolev}.
\end{prop}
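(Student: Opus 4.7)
The plan is to reduce the statement for general $l\in\IZ$ to the classical $l=0$ case (Shubin) by exploiting the Borel functional calculus of the $L^{2}$-closure of $P$.

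\textbf{Inner product identity.} Iterating the recursive definition in the proof of Lemma~\ref{lem:symmetric_on_Sobolev} and using symmetry of $P$ on $L^{2}$, for $u,v\in C_c^\infty(E)$ and $l\in \IN_0$ one gets
\[
\langle u,v\rangle_{H^{lk},P} \;=\; \sum_{j=0}^{l}\binom{l}{j}\langle P^{j}u, P^{j}v\rangle_{L^{2}} \;=\; \bigl\langle u,(1+P^{2})^{l}v\bigr\rangle_{L^{2}},
\]
and the duality definition of the negative-order scalar products extends the same identity to all $l\in\IZ$, once $(1+P^{2})^{l}$ is interpreted through the functional calculus built below.

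\textbf{Base case $l=0$ and functional calculus.} The essential self-adjointness of $P$ on $L^{2}(E)$ is the classical result of Shubin; the extra quasilocal smoothing summand $P_{-\infty}$ is an $L^{2}$-bounded symmetric perturbation, hence preserves essential self-adjointness. Let $\bar P$ be the resulting self-adjoint closure on $L^{2}(E)$ and set $A := (1+\bar P^{2})^{1/2}$. Then $A$ is positive self-adjoint with $A\ge\id$, so $A^{l}$ is defined via Borel functional calculus for every $l\in\IR$, and $A$ commutes with $\bar P$.

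\textbf{Unitary intertwiner $A^{l}\colon H^{lk}(E)\to L^{2}(E)$.} Iterating the fundamental elliptic estimate (Theorem~\ref{thm:elliptic_estimate}) gives $\dom(\bar P^{j}) = H^{jk}(E)$ for every $j\in\IN_0$; by the spectral theorem $\dom(A^{l}) = H^{lk}(E)$ for $l\ge 0$, and $A^{-l}\colon L^{2}(E)\to H^{lk}(E)$ is a bounded bijection for $l>0$. Combined with the inner product identity,
\[
\|A^{l}u\|_{L^{2}}^{2} \;=\; \langle u, A^{2l}u\rangle_{L^{2}} \;=\; \|u\|^{2}_{H^{lk},P}
\]
on $C_c^\infty(E)$, so $A^{l}$ extends to a unitary isomorphism $H^{lk}(E)\to L^{2}(E)$. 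Commutativity of $A$ with $\bar P$ then shows that $A^{l}$ intertwines the action of $P$ on $H^{lk}(E)$ with the action of $P$ on $L^{2}(E)$.

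\textbf{Transferring essential self-adjointness.} Under $U := A^{l}$, the unbounded operator $P$ on $H^{lk}(E)$ with domain $C_c^\infty(E)$ is unitarily equivalent to $P$ on $L^{2}(E)$ with domain $A^{l}(C_c^\infty(E))$. Since $A^{l}$ restricts to a Hilbert space isomorphism $H^{(l+1)k}(E)\to H^{k}(E)$ and $C_c^\infty(E)$ is dense in $H^{(l+1)k}(E)$, the image $A^{l}(C_c^\infty(E))$ is dense in $H^{k}(E) = \dom(\bar P)$ in the graph norm, hence a core for $\bar P$. Thus $P|_{A^{l}(C_c^\infty(E))}$ is essentially self-adjoint on $L^{2}(E)$, and pulling back through the unitary $U$ establishes essential self-adjointness on $H^{lk}(E)$.

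\textbf{Main obstacle.} The only genuinely hard analytic input is the $l=0$ case, which is imported from Shubin. Of the new ingredients, the most delicate is the identification $\dom(A^{l}) = H^{lk}(E)$: this is what allows $\bar P$-regularity and geometric Sobolev regularity to coincide, and it relies crucially on bounded geometry together with the fundamental elliptic estimate.
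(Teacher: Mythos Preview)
Your argument is correct and takes a genuinely different route from the paper's. The paper proves essential self-adjointness on each $H^{lk}(E)$ \emph{directly} via the deficiency-index criterion: given $u \in \ker(P^\ast \pm i) \subset H^{lk}(E)$, elliptic regularity makes $u$ smooth, and iterating the fundamental elliptic estimate bootstraps $u$ into $H^\infty(E)$; there $P^\ast u = Pu$, so the eigenvalue $\pm i$ forces $u=0$. This one argument handles all $l\in\IZ$ at once and does not import Shubin's $l=0$ result as a black box --- in fact it reproves it.

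Your approach instead takes the $l=0$ case as given and transports it to every $H^{lk}(E)$ via the unitary $A^{l}=(1+\bar P^{2})^{l/2}$, using the explicit identification of the scalar products $\langle\,\cdot\,,\,\cdot\,\rangle_{H^{lk},P}$ with $\langle\,\cdot\,,A^{2l}\,\cdot\,\rangle_{L^{2}}$ and the commutation of $A$ with $\bar P$. The payoff is structural: you exhibit that all the Sobolev realizations of $P$ are unitarily conjugate, and the core argument (density of $A^{l}(C_c^\infty(E))$ in $\dom(\bar P)$ for the graph norm) is transparent. The cost is that you need more machinery --- the functional calculus, the domain identification $\dom(A^{l})=H^{lk}(E)$, and some care in interpreting the pairing for negative~$l$ --- whereas the paper's deficiency-index argument is a few lines and needs only the elliptic estimate. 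Both proofs ultimately rest on the same analytic input, the fundamental elliptic estimate, but use it in different places.
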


\begin{proof}
This proof is an adapted version of the proof of this statement for compact manifolds from \cite{MO_elliptic_essentially_self_adjoint}.

We will use the following sufficient condition for essential self-adjointness: if we have a symmetric and densely defined operator $T$ such that $\kernel (T^\ast \pm i) = \{0\}$, then the closure $\overline{T}$ of $T$ is self-adjoint and is the unique self-adjoint extension of $T$.

So let $u \in \kernel (P^\ast \pm i) \subset H^{lk}(E)$, i.e., $P^\ast u = \pm i u$. From elliptic regularity we get that $u$ is smooth and using the fundamental elliptic estimate for $P^\ast$\footnote{Note that $P^\ast$ is elliptic if and only if $P$ is.} we can then conclude $\|u\|_{H^{k+lk}} \le C_{k+lk}\big(\|u\|_{H^{lk}} + \|P^\ast u\|_{H^{lk}}\big) = 2 C_{k+lk} \|u\|_{H^{lk}} < \infty$, i.e., $u \in H^{k+lk}(E)$. Repeating this argument gives us $u \in H^\infty(E)$, i.e., $u$ lies in the domain of $P$ itself and is therefore an eigenvector of it to the eigenvalue $\pm i$. But since $P$ is symmetric we must have $u = 0$. This shows $\kernel (P^\ast \pm i) = \{0\}$ and therefore $P$ is essentially self-adjoint.
\end{proof}

\section{Functions of symmetric, elliptic operators}\label{sec:functions_of_PDOs}

Let $P \in \UPsiDO^k(E)$ be a symmetric and elliptic uniform pseudodifferential operator of positive order $k \ge 1$. By Proposition \ref{prop:elliptic_PDO_essentially_self-adjoint} we know that $P\colon L^2(E) \to L^2(E)$ is essentially self-adjoint. So, if $f$ is a Borel function defined on the spectrum of $P$, the operator $f(P)$ is defined by the functional calculus. In this whole section $P$ will denote such an operator, i.e., a symmetric and elliptic one of positive order.

Given such a uniform pseudodifferential operator $P$, we will later show that it defines naturally a class in uniform $K$-homology. For this we will have to consider $\chi(P)$, where $\chi$ is a so-called normalizing function, and we will have to show that $\chi(P)$ is uniformly pseudolocal and $\chi(P)^2 - 1$ is uniformly locally compact. For this we will need the analysis done in this section.

If $f$ is a Schwartz function, we have the formula $f(P) = \frac{1}{\sqrt{2\pi}}\int_\IR \hat{f}(t) e^{itP} dt$, where $\hat{f}$ is the Fourier transform of $f$. In the case that $P = D$ is an elliptic, first-order differential operator and its symbol satisfies $\sup_{x \in M, \|\xi\| = 1} \|\sigma_D(x, \xi)\| < \infty$, the operator $e^{itD}$ has finite propagation (a proof of this may be found in, e.g., \cite[Proposition 10.3.1]{higson_roe}) from which (exploiting the above formula for $f(D)$) we may deduce the needed properties of $\chi(P)$ and $\chi(P)^2 - 1$. But this is no longer the case for a general elliptic pseudodifferential operator $P$ and therefore the analysis that we have to do here in this general case is much more sophisticated.

Note that the restriction to operators of order $k \ge 1$ in this section is no restriction on the fact that symmetric and elliptic uniform pseudodifferential operators define uniform $K$-homology classes. In fact, if $P$ has order $k \le 0$, then we know from Proposition \ref{prop:PDO_order_0_l-uniformly-pseudolocal} that $P$ is uniformly pseudolocal, i.e., there is no need to form the expression $\chi(P)$ in order for $P$ to define a uniform $K$-homology class.

We start with the following crucial technical lemma which is a generalization of the fact that $e^{itD}$ has finite propagation to pseudodifferential operators. Note that we do not have to assume something like $\sup_{x \in M, \|\xi\| = 1} \|\sigma_D(x, \xi)\| < \infty$ that we had to for first-order differential operators, since such an assumption is subsumed in the uniformity condition that we have in the definition of pseudodifferential operators.

\begin{lem}\label{lem:exp(itP)_quasilocal}
Let $P \in \UPsiDO^{k\ge 1}(E)$ be symmetric and elliptic. Then the operator $e^{itP}$ is a quasilocal operator $H^{l}(E) \to H^{l-(k-1)}(E)$ for all $l \in \IR$ and $t \in \IR$.
\end{lem}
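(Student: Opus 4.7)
The strategy is a one-step Duhamel / commutator argument: reduce the quasi-locality of $e^{itP}$ to the fact that the commutator $[\phi_R,P]$ is a uniform pseudodifferential operator of strictly lower order $k-1$ whose symbol constants can be made arbitrarily small by letting the cutoff $\phi_R$ vary on a very large scale $R$.

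\textbf{Setup and cutoff.} Given $L\subset M$, $R>0$ and $u\in H^l(E)$ with $\supp u\subset L$, I first build a smooth cutoff $\phi_R\in C_b^\infty(M)$ with $\phi_R\equiv 0$ on $B_{R/3}(L)$, $\phi_R\equiv 1$ on $M-B_{2R/3}(L)$ and with uniform derivative bounds $\|\nabla^j\phi_R\|_\infty\le C_j/R^j$ for every $j\ge 1$, the $C_j$ being independent of $L$ and $R$. On a manifold of bounded geometry this is done by smoothing $d(\,\cdot\,,L)/R$ against a fixed mollifier in each normal-coordinate chart supplied by Lemma~\ref{lem:nice_coverings_partitions_of_unity}. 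Since $\phi_R\equiv 1$ on an open neighbourhood of $M-B_R(L)$, the element $\phi_R e^{itP}u$ is an admissible extension for the seminorm $\|\cdot\|_{H^{l-(k-1)},M-B_R(L)}$, so it suffices to bound $\|\phi_R e^{itP}u\|_{H^{l-(k-1)}}$.

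\textbf{Duhamel identity.} Differentiating $s\mapsto e^{-isP}\phi_R e^{isP}$ on a dense subspace yields, after multiplying by $e^{itP}$ and extending by density,
\[
[\phi_R,e^{itP}]=i\int_0^t e^{i(t-s)P}\,[\phi_R,P]\,e^{isP}\,ds.
\]
Because $\phi_R$ vanishes on $\supp u$, $\phi_R u=0$ and hence $\phi_R e^{itP}u=[\phi_R,e^{itP}]u$. The boundedness of every propagator $e^{isP}$ on the Sobolev spaces involved is not new: Proposition~\ref{prop:elliptic_PDO_essentially_self-adjoint} combined with Lemma~\ref{lem:symmetric_on_Sobolev} shows that $e^{isP}$ is unitary with respect to the equivalent inner products $\langle-,-\rangle_{H^{nk},P}$ for $n\in\IZ$; complex interpolation (together with duality) upgrades this to boundedness $H^\lambda(E)\to H^\lambda(E)$ for every $\lambda\in\IR$, with a constant depending on $\lambda$ and $t$ but not on the base point $L$.

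\textbf{Commutator estimate and conclusion.} By Proposition~\ref{prop:PsiDOs_filtered_algebra}, $[\phi_R,P]\in\UPsiDO^{k-1}(E)$. Using the composition formula (valid locally as in \cite[Theorem III.§3.10]{lawson_michelsohn}, uniformly across charts thanks to the Uniformity Condition) the local symbol of $[\phi_R,P]$ is a sum of terms each of which contains at least one factor $\partial_x^\gamma\phi_R$ with $|\gamma|\ge 1$; with the scaling of $\phi_R$ fixed above, every symbol-bound constant $C^{\alpha\beta}$ for $[\phi_R,P]$ is therefore $\le C_{\alpha\beta}/R$, uniformly in $L$ and $R$. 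Remark~\ref{rem:bound_operator_norm_PDO} (extended to real Sobolev exponents by the interpolation invoked above) then gives
\[
\bigl\|[\phi_R,P]\bigr\|_{H^l(E)\to H^{l-(k-1)}(E)}\le \frac{C_l}{R}.
\]
Inserting this and the boundedness of the propagators into the Duhamel formula yields
\[
\|\phi_R e^{itP}u\|_{H^{l-(k-1)}}\le \frac{C_{l,t}}{R}\,\|u\|_{H^l},
\]
so $\mu(R):=C_{l,t}/R$ is a dominating function and $e^{itP}\colon H^l(E)\to H^{l-(k-1)}(E)$ is quasilocal.

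\textbf{Main obstacle.} The delicate point is the uniform-in-$L,R$ control of the symbol constants of $[\phi_R,P]$ on a manifold of bounded geometry: one has to track how the composition formula interacts with the fixed scaling $\|\nabla^j\phi_R\|_\infty\lesssim R^{-j}$ and verify that \emph{all} constants $C^{\alpha\beta}$ (and not only the leading term $\partial_\xi p\cdot \partial_x\phi_R$) inherit a factor of $1/R$. Once this bookkeeping is done, the loss of $k-1$ derivatives in the target space is the natural price paid for invoking the commutator $[\phi_R,P]$ only once; a finer, but for our purposes unnecessary, iteration of the Duhamel identity would compensate this loss.
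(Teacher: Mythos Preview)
Your proof is correct and follows essentially the same route as the paper's: both arguments construct a smooth cutoff whose derivatives scale like $1/R$, use the Duhamel identity $[e^{itP},\text{cutoff}]=-it\int_0^1 e^{ixtP}[\text{cutoff},P]e^{i(1-x)tP}\,dx$ (your $\int_0^t$ version is just a reparametrization), invoke unitarity of $e^{isP}$ on the spaces $H^{nk}(E)$ from Proposition~\ref{prop:elliptic_PDO_essentially_self-adjoint} plus interpolation, and bound $\|[\text{cutoff},P]\|_{l,l-(k-1)}$ by $C/R$ via the symbol calculus and Remark~\ref{rem:bound_operator_norm_PDO}. The only cosmetic differences are that your cutoff $\phi_R$ is the complement of the paper's $\eta$ (vanishing near $L$ rather than equal to $1$ there) and that you estimate the seminorm directly via $\|\phi_R e^{itP}u\|$ whereas the paper pairs against test vectors $v$ supported in $M\setminus B_{R+1}(L)$.
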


\begin{proof}
This proof is inspired by the proof of \cite[Theorem 3.1]{mcintosh_morris}.

We will need the following two facts:
\begin{enumerate}
\item $\|e^{itP}\|_{l,l} = 1$ for all $l \in \IR$, where $\|-\|_{l,l}$ denotes the operator norm of operators $H^{l}(E) \to H^{l}(E)$ and
\item there exists $\kappa > 0$ such that $\|[\eta, P]\|_{s,s-(k-1)} \le \kappa \cdot \sum_{j=1}^N \|\nabla^j \eta\|_\infty$ for all smooth $\eta \in C_b^\infty(M)$, where $N$ does not depend on $\eta$.
\end{enumerate}

The first point above holds since $e^{itP}$ is a unitary operator $H^{lk}(E) \to H^{lk}(E)$ with $l\in \IZ$ by using Proposition \ref{prop:elliptic_PDO_essentially_self-adjoint}, and by interpolation between the different Sobolev exponents we get the needed norm estimate on any $H^l(E)$ with $l \in \IR$, i.e., not only for integer multiples of $k$.

The second point above is due to the facts that by Proposition \ref{prop:PsiDOs_filtered_algebra} the commutator $[\eta, P]$ is a pseudodifferential operator of order $k-1$ (recall that smooth functions with bounded derivatives are operators of order $0$) and due to Remark \ref{rem:bound_operator_norm_PDO} (where we have to recall the formula how to compute the symbol of the composition of two pseudodifferential operators from, e.g., \cite[Theorem III.§3.10]{lawson_michelsohn}).

Let $L \subset M$ and let $u \in H^{l}(E)$ be supported within $L$. Furthermore, we choose an $R > 0$ and a smooth, real-valued function $\eta$ with $\eta \equiv 1$ on $L$, $\eta \equiv 0$ on $M - B_{R+1}(L)$ and the first $N$ derivatives of $\eta$ (for $N$ as above) bounded from above by $C/R$ for a constant $C$ which does not depend on $u,L,R,\eta$. Concretely, one can construct $\eta$ by mollifying the function $\eta_0(x) := \max\big\{0,1-d(x,B_{1/2}(L))/R\big\}$ with a uniform collection of local mollifiers that are supported in balls of radius less than $1/2$ and with midpoints in the region $B_{R+1/2}(L) - B_{1/2}(L)$. If we denote a local mollifier by $\psi$, then we have for the Lipschitz constant the estimate
\[\operatorname{Lip}\!\big(D^\alpha(\eta_0 \ast \psi)\big) = \operatorname{Lip}( \eta_0 \ast D^\alpha \psi) \le \operatorname{Lip}(\eta_0) \cdot \|D^\alpha \psi\|_{L^1} = 1/R \cdot \|D^\alpha \psi\|_{L^1}\]
from which the needed property on the derivatives of $\eta$ follows. Note that we only need to do this proof for large $R$, i.e., we do the arguments here only for $R$ bigger than, say, the injectivity radius of $M$. This means that the derivatives of the local mollifiers that we use do not explode since there is now a lower bound on the size of the coordinate charts in which we apply our mollifiers.

For all $v \in H^{l-(k-1)}(E)$ that are supported in $M - B_{R+1}(L)$ we have
\begin{align*}
\langle e^{itP} u, v\rangle_{H^{l-(k-1)}} & = \langle e^{itP} \eta u, v\rangle_{H^{l-(k-1)}} - \langle e^{itP} u, \eta v\rangle_{H^{l-(k-1)}}\\
& = \langle [e^{itP},\eta] u, v\rangle_{H^{l-(k-1)}},
\end{align*}
i.e., $|\langle e^{itP} u, v\rangle_{H^{l-(k-1)}}| \le \|[e^{itP},\eta]\|_{l,l-(k-1)} \cdot \|u\|_{H^{l}} \cdot \|v\|_{H^{l-(k-1)}}$ and it remains to give an estimate for $\|[e^{itP},\eta]\|_{l,l-(k-1)}$: we have (the expressions are to be considered point-wise, i.e., after application to a fixed vector $v$)
\begin{align*}
[e^{itP}, \eta] & = \int_0^1 \tfrac{d}{dx} \big( e^{ixtP} \eta e^{i(1-x)tP} \big) dx\\
& = -it \int_0^1 e^{ixtP} [\eta,P] e^{i(1-x)tP} dx
\end{align*}
which gives by factorizing the integrand as
\[H^{l}(E) \stackrel{e^{i(1-x)tP}}\longrightarrow H^{l}(E) \stackrel{[\eta, P]}\longrightarrow H^{l-(k-1)}(E) \stackrel{e^{ixtP}}\longrightarrow H^{l-(k-1)}(E)\]
the estimate
\[\|[e^{itP},\eta]\|_{l,l-(k-1)} \le |t| \int_0^1 \|[\eta, P]\|_{l,l-(k-1)} dx \le |t| \cdot \kappa \cdot \sum_{j=1}^N \|\nabla^j \eta\|_\infty.\]
Since $\|\nabla^j \eta\|_\infty < C/R$ for all $1 \le j \le N$, we have shown
\begin{equation}
\label{eq:dominating_function_exp(itP)}
|\langle e^{itP} u, v\rangle_{H^{l-(k-1)}}| < \frac{|t| \kappa N C}{R} \cdot \|u\|_{H^{l}} \cdot \|v\|_{H^{l-(k-1)}}
\end{equation}
for all $u$ supported in $L$ and all $v$ in $M-B_{R+1}(L)$. Because $R > 0$ and $l \in \IR$, $t \in\IR$ were arbitrary, the claim that $e^{itP}$ is a quasilocal operator $H^{l}(E) \to H^{l-(k-1)}(E)$ for all $l \in \IR$ and $t \in \IR$ follows.
\end{proof}

\begin{cor}[cf.~{\cite[Lemma 1.1 in Chapter XII.§1]{taylor_pseudodifferential_operators}}]
\label{cor:lth_derivative_integrable_defines_quasilocal_operator}
Let $q(t)$ be a function on $\IR$ such that for an $n \in \IN_0$ the functions $q(t)|t|$, $q^\prime(t)|t|$,  $\ldots$, $q^{(n)}(t)|t|$ are integrable, i.e., belong to $L^1(\IR)$.

Then the operator defined by $\int_\IR q(t) e^{itP} dt$ is for all values $l \in \IR$ a quasilocal operator $H^{l-nk+k-1}(E) \to H^{l}(E)$, i.e., is of order $-nk + k - 1$.
\end{cor}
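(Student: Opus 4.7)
The plan is to combine integration by parts in the time variable with the propagation estimate for the wave group provided by Lemma \ref{lem:exp(itP)_quasilocal}, and then to transfer regularity using ellipticity via Theorem \ref{thm:elliptic_estimate}. Write $A := \int_\IR q(t) e^{itP} \, dt$. First, using essential self-adjointness of $P$ on the Sobolev scale (Proposition \ref{prop:elliptic_PDO_essentially_self-adjoint}) together with $\tfrac{d}{dt} e^{itP} = iPe^{itP}$, an $n$-fold integration by parts in $t$ yields the formal identity
\[P^j A \;=\; i^j \int_\IR q^{(j)}(t)\, e^{itP} \, dt \qquad \text{for } 0 \le j \le n,\]
with boundary terms vanishing because the integrability of $q^{(j)}(t)|t|$ forces the required decay of $q^{(j)}$ at $\pm \infty$.

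Iterating the fundamental elliptic estimate (Theorem \ref{thm:elliptic_estimate}) $n$ times produces an inequality $\|w\|_{H^l} \le C_{n,l} \sum_{j=0}^n \|P^j w\|_{H^{l-nk}}$, so bounding $\|Au\|_{H^l}$ reduces to bounding each $\|P^j A u\|_{H^{l-nk}}$. Using the identity above and applying the dominating-function estimate \eqref{eq:dominating_function_exp(itP)} with the index $l$ there replaced by $l-nk+k-1$ yields, for $u$ supported in $L$ and $v$ supported in $M - B_{R+1}(L)$, the tail bound
\[|\langle e^{itP} u, v\rangle_{H^{l-nk}}| \;\le\; \frac{|t|\, \kappa N C}{R}\, \|u\|_{H^{l-nk+k-1}}\, \|v\|_{H^{l-nk}}.\]
Integrating against $|q^{(j)}(t)|$ and invoking the hypothesis $\int_\IR |q^{(j)}(t)\, t| \, dt < \infty$ gives $\|P^j A u\|_{H^{l-nk},\, M - B_{R+1}(L)} \le C_j/R \cdot \|u\|_{H^{l-nk+k-1}}$ with $C_j < \infty$, for each $0 \le j \le n$.

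The final step transfers these tail bounds on the $P^j A u$ in $H^{l-nk}$ into a tail bound on $A u$ in the stronger space $H^l$. This is achieved by a localized version of the iterated elliptic estimate: choose a smooth cutoff $\chi$ equal to $1$ on a neighborhood of $M - B_R(L)$ and supported in $M - B_{R-1}(L)$, with uniformly bounded derivatives (possible by the bounded geometry of $M$ and Lemma \ref{lem:nice_coverings_partitions_of_unity}); then apply the iterated elliptic estimate to $\chi A u$ and expand each $P^j(\chi A u) = \chi\, P^j A u + [P^j, \chi]\, A u$. The commutator $[P^j, \chi]$ is by Proposition \ref{prop:PsiDOs_filtered_algebra} a uniform $\Psi$DO of order $jk-1$ supported in the narrow annulus where $\nabla \chi \ne 0$, so its contributions can be absorbed recursively into the same scheme. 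The resulting bound $\|Au\|_{H^l,\, M - B_R(L)} \le \mu(R)\,\|u\|_{H^{l-nk+k-1}}$ with $\mu(R) \to 0$ as $R \to \infty$ establishes quasilocality of order $-nk+k-1$, as claimed.

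The main obstacle is precisely this last step, where the commutator terms from the localization must be kept under control without destroying the $1/R$ decay inherited from Lemma \ref{lem:exp(itP)_quasilocal}. A cleaner alternative is to use the parametrix $Q \in \UPsiDO^{-nk}(E)$ for $P^n$ furnished by the Existence of Parametrices Theorem: writing $A = Q \cdot (P^n A) + S A$ with $S \in \UPsiDO^{-\infty}(E)$ smoothing (hence quasilocal of any order), the quasilocality of $A$ then follows from that of $P^n A$, since uniform $\Psi$DOs preserve quasilocality under composition, by an argument analogous to the one used in the proof of Proposition \ref{prop:PsiDOs_filtered_algebra}.
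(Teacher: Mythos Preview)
Your ``cleaner alternative'' at the very end is precisely the paper's proof: take a parametrix $Q$ for $P$ (so $QP = \id - S_2$), perform the $n$-fold integration by parts to obtain
\[
(iQ)^n \int_\IR q^{(n)}(t)\,e^{itP}\,dt \;=\; (\id - S_2)^n \int_\IR q(t)\,e^{itP}\,dt,
\]
observe that both integrals are quasilocal of order $k-1$ by Lemma~\ref{lem:exp(itP)_quasilocal} and the $L^1$ hypotheses on $q(t)|t|$ and $q^{(n)}(t)|t|$, and then rearrange to get $A = (iQ)^n \int q^{(n)}(t)\,e^{itP}\,dt - R$ with $R$ quasilocal smoothing. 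Since $Q^n$ is a quasilocal operator of order $-nk$ and compositions of quasilocal operators are quasilocal (\cite[Proposition~5.2]{roe_index_1}), the claim follows immediately. No cutoffs, no commutators.

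Your primary approach via a localized elliptic estimate is a genuinely different route, and the obstacle you flag is real and not resolved by what you wrote. The recursion you sketch for the commutator terms $[P^j,\chi]\,Au$ requires controlling $\|Au\|_{H^{l-1}}$ (for $j=n$) on the annulus $\{\nabla\chi \neq 0\}$ at distance $\sim R$ from $L$, with decay in $R$; but this is essentially the statement you are trying to prove, only one Sobolev index lower. Making the induction precise is delicate because $l$ is real, the commutators $[P^j,\chi]$ are not finite propagation (they inherit the quasilocal tail from $P$), and you must preserve the $1/R$ decay through each step. It can likely be made to work, but the parametrix argument bypasses all of this by replacing the local cutoff $\chi$ with the globally defined quasilocal operator $Q$, so the transfer of regularity happens without any localization error terms. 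That is exactly what the parametrix buys you here, and why the paper (and your own final paragraph) takes that route.
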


\begin{proof}
Let $Q \in \UPsiDO^{-k}(E)$ be a parametrix for $P$, i.e., $PQ = \id - S_1$ and $QP = \id - S_2$, where $S_1, S_2 \in \UPsiDO^{-\infty}(E)$. Integration by parts $n$ times yields:
\begin{equation}
\label{eq:formula_integration_by_parts_quasilocal}
(i Q)^n \int_\IR q^{(n)}(t) e^{itP} dt = (i Q)^n (-i P)^n \int_\IR q(t) e^{itP} dt = (\id - S_2)^n \int_\IR q(t)e^{itP} dt.
\end{equation}
Since $q(t)|t|$ and $q^{(n)}(t)|t|$ are integrable and due to the Estimate \eqref{eq:dominating_function_exp(itP)}, we conclude with Lemma \ref{lem:exp(itP)_quasilocal} that both integrals $\int_\IR q(t)e^{itP} dt$ and $\int_\IR q^{(n)}(t)e^{itP} dt$ define quasilocal operators of order $k-1$ on $H^{l}(E)$. Note that for $\int_\IR q(t)e^{itP} dt$ this is just a first result which we will need now in order to show that the order of this operator is in fact lower.

Now $(\id - S_2)^n = \id + \sum_{j=1}^n \binom{n}{j}(-S_2)^j$ and the sum is a quasilocal smoothing operator because $S_2$ is one. Since the composition of quasilocal operators is again a quasilocal operator (see \cite[Proposition 5.2]{roe_index_1}), we conclude that the second summand $R$ of
\begin{equation}
\label{eq:formula_integration_by_parts_quasilocal_2}
(\id - S_2)^n \int_\IR q(t)e^{itP} dt = \int_\IR q(t)e^{itP} dt + \underbrace{\sum_{j=1}^n \binom{n}{j}(-S_2)^j \int_\IR q(t)e^{itP} dt}_{=: R}
\end{equation}
is also a quasilocal smoothing operator. Now Equations \eqref{eq:formula_integration_by_parts_quasilocal} and \eqref{eq:formula_integration_by_parts_quasilocal_2} together yield
\[\int_\IR q(t)e^{itP} dt = (i Q)^n \int_\IR q^{(n)}(t) e^{itP} dt - R,\]
from which the claim follows.
\end{proof}

Recall that if $f$ is a Schwartz function, then the operator $f(P)$ is given by
\begin{equation}\label{eq:schwartz_function_of_PDO}
f(P) = \frac{1}{\sqrt{2\pi}}\int_\IR \hat{f}(t) e^{itP} dt,
\end{equation}
where $\hat{f}$ is the Fourier transform of $f$. Since $\hat{f}$ is also a Schwartz function, it satisfies the assumption in Corollary \ref{cor:lth_derivative_integrable_defines_quasilocal_operator} for all $n \in \IN_0$, i.e., $f(P)$ is a quasilocal smoothing operator. Applying this argument to the adjoint operator $f(P)^\ast = \overline{f}(P)$, we get with Lemma \ref{lem:PDO_-infinity_equal_quasilocal_smoothing} our next corollary:

\begin{cor}\label{cor:schwartz_function_of_PDO_quasilocal_smoothing}
If $f$ is a Schwartz function, then $f(P) \in \UPsiDO^{-\infty}(E)$.
\end{cor}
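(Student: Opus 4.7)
The plan is to read the result directly off the formula
\[f(P) = \frac{1}{\sqrt{2\pi}} \int_\IR \hat f(t) e^{itP} \, dt,\]
which is valid because $P$ is essentially self-adjoint by Proposition~\ref{prop:elliptic_PDO_essentially_self-adjoint}, together with the dominating function estimate \eqref{eq:dominating_function_exp(itP)} underlying Lemma~\ref{lem:exp(itP)_quasilocal}. The core observation is that the Fourier transform $\hat f$ of a Schwartz function is again Schwartz, so in particular $t \mapsto t \cdot \hat f^{(n)}(t)$ is integrable for every $n \in \IN_0$. Hence the hypothesis of Corollary~\ref{cor:lth_derivative_integrable_defines_quasilocal_operator} is satisfied with $q = \hat f$ for \emph{every} choice of $n$.

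Applying that corollary, I obtain that $f(P)$ is a quasilocal operator $H^{l-nk+k-1}(E) \to H^l(E)$ for all $l \in \IR$, and this holds for every $n$. Letting $n$ run through all of $\IN_0$ shows that $f(P)$ extends to a continuous quasilocal operator between any two Sobolev spaces $H^{-k}(E) \to H^l(E)$, $k, l \in \IN_0$. By Lemma~\ref{lem:smoothing_operator_iff_bounded}, this means that $f(P)$ defines a continuous (hence smoothing) operator $H^{-\infty}_\iota(E) \to H^\infty(E)$, and by construction it is quasilocal in each of the required senses of Definition~\ref{defn:quasiloc_smoothing}.

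It remains to deal with the adjoint. Since $P$ is self-adjoint in its $L^2$-realization, the functional calculus gives $f(P)^\ast = \overline{f}(P)$, and $\overline{f}$ is likewise a Schwartz function. Running exactly the same argument with $\overline{f}$ in place of $f$, I conclude that $f(P)^\ast$ is also a quasilocal smoothing operator. Therefore $f(P) \in \IU(E)$, and Lemma~\ref{lem:PDO_-infinity_equal_quasilocal_smoothing} identifies this with membership in $\UPsiDO^{-\infty}(E)$, which is precisely the claim.

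Essentially no step in this proposal is an obstacle because the bulk of the analytic work has been absorbed into Lemma~\ref{lem:exp(itP)_quasilocal} and Corollary~\ref{cor:lth_derivative_integrable_defines_quasilocal_operator}. The only point that deserves a sentence of care is the justification of $f(P)^\ast = \overline{f}(P)$, which follows from the spectral theorem applied to the self-adjoint closure of $P$ provided by Proposition~\ref{prop:elliptic_PDO_essentially_self-adjoint}; once this is in hand, the argument is a one-line consequence of the two preceding results.
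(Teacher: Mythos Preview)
Your proposal is correct and follows essentially the same route as the paper: use the Fourier representation of $f(P)$, note that $\hat f$ is Schwartz so Corollary~\ref{cor:lth_derivative_integrable_defines_quasilocal_operator} applies for every $n$, conclude that $f(P)$ is quasilocal smoothing, treat the adjoint via $f(P)^\ast = \overline{f}(P)$, and finish with Lemma~\ref{lem:PDO_-infinity_equal_quasilocal_smoothing}. The paper's argument is just a terser version of yours.
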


Recall from \cite[Lemma 4.2]{spakula_uniform_k_homology} that the uniformly pseudolocal operators form a $C^\ast$-algebra and that the uniformly locally compact operators form a closed, two-sided $^\ast$-ideal in there. Since Schwartz functions are dense in $C_0(\IR)$ and quasilocal smoothing operators are uniformly locally compact (Corollary \ref{cor:quasilocal_neg_order_uniformly_locally_compact}), we get with the above corollary that $g(P)$ is uniformly locally compact if $g \in C_0(\IR)$.

\begin{cor}\label{cor:g(P)_uniformly_locally_compact_g_vanishing_at_infinity}
Let $g \in C_0(\IR)$. Then $g(P)$ is uniformly locally compact.
\end{cor}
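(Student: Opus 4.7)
The plan is to use a standard density-and-closure argument against the $C^{\ast}$-algebraic framework already established by \Spakula.

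First I would choose a sequence $(f_n) \subset \mathcal{S}(\IR)$ of Schwartz functions converging to $g$ uniformly on $\IR$; this is possible since $\mathcal{S}(\IR)$ is dense in $C_0(\IR)$ with respect to the sup norm (one can, for instance, multiply a sequence of smooth compactly supported approximations by Gaussian cutoffs, or simply invoke the fact that $C_c^\infty(\IR) \subset \mathcal{S}(\IR) \subset C_0(\IR)$ with the first inclusion dense). Since $P$ is essentially self-adjoint on $L^2(E)$ by Proposition \ref{prop:elliptic_PDO_essentially_self-adjoint}, the Borel functional calculus yields a $^\ast$-homomorphism from $C_b(\IR)$ (with sup norm) into the bounded operators on $L^2(E)$ which is norm-contractive. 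Consequently $\|f_n(P) - g(P)\|_{L^2 \to L^2} \le \|f_n - g\|_\infty \to 0$.

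Next I would invoke Corollary \ref{cor:schwartz_function_of_PDO_quasilocal_smoothing} to conclude that each $f_n(P)$ lies in $\UPsiDO^{-\infty}(E) = \IU(E)$, i.e.\ is a quasilocal smoothing operator whose adjoint (namely $\overline{f_n}(P)$, again coming from a Schwartz function) is also quasilocal smoothing. Corollary \ref{cor:quasilocal_neg_order_uniformly_locally_compact} then shows that each $f_n(P)$ is uniformly locally compact.

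Finally I would appeal to \cite[Lemma 4.2]{spakula_uniform_k_homology}, which states that the uniformly locally compact operators form a closed two-sided $^\ast$-ideal inside the uniformly pseudolocal operators (in particular they are norm-closed inside $\IB(L^2(E))$). Since $g(P)$ is the operator-norm limit of the uniformly locally compact operators $f_n(P)$, it is itself uniformly locally compact, which is the desired conclusion.

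There is no real obstacle here; all the substantial analysis (quasi-locality of $e^{itP}$, the passage from quasilocal smoothing to uniformly locally compact, and the closedness of the uniformly locally compact operators) has already been done. The only thing worth double-checking is that the approximation takes place in the \emph{sup} norm, which is what the functional calculus requires in order to transfer convergence from scalar functions to bounded operators; the density $\mathcal{S}(\IR) \subset C_0(\IR)$ in sup norm makes this immediate.
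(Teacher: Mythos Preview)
Your argument is correct and is essentially identical to the paper's own proof: both approximate $g$ in sup norm by Schwartz functions, apply Corollary~\ref{cor:schwartz_function_of_PDO_quasilocal_smoothing} together with Corollary~\ref{cor:quasilocal_neg_order_uniformly_locally_compact} to see that the approximants are uniformly locally compact, and then invoke \cite[Lemma~4.2]{spakula_uniform_k_homology} for norm-closedness. You have simply spelled out the functional-calculus step (via Proposition~\ref{prop:elliptic_PDO_essentially_self-adjoint}) a bit more explicitly than the paper does.
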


Now we turn our attention to functions which are more general than Schwartz functions. To be concrete, we consider functions of the following type:

\begin{defn}[Symbols on $\IR$]\label{defn:symbols_on_R}
For arbitrary $m \in \IZ$ we define
\[\mathcal{S}^m(\IR) := \{f \in C^\infty(\IR) \ | \ |f^{(n)}(x)| < C_n(1 + |x|)^{m-n} \text{ for all } n \in \IN_0\}.\]

Note that we have $\mathcal{S}(\IR) = \bigcap_m \mathcal{S}^m(\IR)$, where $\mathcal{S}(\IR)$ denotes the Schwartz space.
\end{defn}

Let us state now the generalization of \cite[Theorem 5.5]{roe_index_1} from operators of Dirac type to uniform pseudodifferential operators:

\begin{prop}[cf.~{\cite[Theorem 5.5]{roe_index_1}}]\label{prop:f(P)_quasilocal_of_symbol_order}
Let $f \in \mathcal{S}^m(\IR)$ with $m \le 0$. Then for all $l \in \IR$ the operator $f(P)$ is a quasilocal operator of order $mk+k-1$, i.e., is an operator $f(P) \colon H^{l}(E) \to H^{l-(mk+k-1)}(E)$.
\end{prop}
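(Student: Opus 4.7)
The plan is to reduce to the Schwartz case handled in Corollary~\ref{cor:schwartz_function_of_PDO_quasilocal_smoothing} by isolating the behaviour of $f$ near the spectral origin and then apply Corollary~\ref{cor:lth_derivative_integrable_defines_quasilocal_operator} to the remainder. First, I would fix a cutoff $\chi \in C_c^\infty(\IR)$ with $\chi \equiv 1$ on a neighbourhood of $0$ and decompose $f = f\chi + f(1-\chi)$. The first summand has compact support and is therefore Schwartz, so $(f\chi)(P) \in \UPsiDO^{-\infty}(E)$ by Corollary~\ref{cor:schwartz_function_of_PDO_quasilocal_smoothing}; in particular it is quasilocal of every order and contributes trivially.

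For the remainder $g := f(1-\chi) \in \mathcal{S}^m$, which vanishes on a neighbourhood of $0$, the function $h(x) := g(x)/x^N$ is $C^\infty$ on all of $\IR$ for any $N \in \IN_0$, and the symbol estimates translate into $h \in \mathcal{S}^{m-N}$. By the functional calculus one has the factorization $g(P) = P^N h(P)$. Since $P^N \in \UPsiDO^{Nk}(E)$, it suffices to show that, for $N$ sufficiently large, $h(P)$ is quasilocal of order $(m-N)k + k - 1$: the composition $P^N h(P)$ is then quasilocal of order $Nk + (m-N)k + k - 1 = mk + k - 1$, as required.

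To analyse $h(P)$ I would write $h(P) = \tfrac{1}{\sqrt{2\pi}} \int_\IR \hat h(t)\, e^{itP}\,dt$ and invoke Corollary~\ref{cor:lth_derivative_integrable_defines_quasilocal_operator} with $q = \hat h/\sqrt{2\pi}$ and parameter $n = N - m$, which delivers exactly an operator of order $-nk + k - 1 = (m-N)k + k - 1$. The hypothesis to verify is $|t|\,\hat h^{(j)}(t) \in L^1(\IR)$ for all $0 \le j \le N-m$. From $\hat h^{(j)} = (-i)^j\, \widehat{x^j h}$ and $x^j h \in \mathcal{S}^{m-N+j}$, the indices $j \le N-m-2$ are immediate: $x^j h \in L^1$ forces $\hat h^{(j)}$ to be bounded continuous, and taking enough $x$-derivatives of $x^j h$ (all in $L^1$ for $N$ large) produces rapid decay of $\hat h^{(j)}$ at infinity.

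The main technical obstacle is the two boundary indices $j \in \{N-m-1,\, N-m\}$, where $x^j h$ falls out of $L^1$ and $\hat h^{(j)}$ can fail to be a bounded function near $t = 0$. Here the extra factor $|t|$ rescues integrability through the distributional identity $t\,\hat h^{(j)}(t) = -i\,\widehat{(x^j h)'}(t)$: for both critical indices $(x^j h)' \in \mathcal{S}^{m-N+j-1}$ is of order at most $-1$, so $(x^j h)'$ lies in $L^2 \cap C^\infty$ with all higher derivatives in $L^1$ for $N$ large. Consequently its Fourier transform is in $L^2_{\mathrm{loc}}$ and rapidly decreasing at infinity, which yields $|t|\,\hat h^{(j)}(t) \in L^1(\IR)$ and closes the verification.
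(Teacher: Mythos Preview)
Your argument is correct and gives a more self-contained proof than the paper's, which essentially just refers to Roe's original argument and indicates why the loss of $k-1$ orders appears (because Lemma~\ref{lem:exp(itP)_quasilocal} only yields quasilocality of order $k-1$ for $e^{itP}$, in contrast to the finite propagation available for Dirac operators). The paper's sketch first establishes the \emph{boundedness} of $f(P)$ of order $mk$ directly, by writing $f(P) = (1+|P|)^{m}\cdot\big((1+|x|)^{-m}f\big)(P)$ and invoking the elliptic estimate, and then defers the quasilocality to Roe's dyadic wave-equation argument. Your route instead packages everything through the two corollaries already proved in the paper: the compactly supported piece $f\chi$ is dispatched by Corollary~\ref{cor:schwartz_function_of_PDO_quasilocal_smoothing}, and the remainder is reduced, via the clean algebraic factorisation $g(P)=P^N h(P)$, to a single application of Corollary~\ref{cor:lth_derivative_integrable_defines_quasilocal_operator}. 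This buys you an explicit order count with no appeal to the external reference, at the price of the Fourier-analytic bookkeeping you carry out for the boundary indices $j\in\{N-m-1,N-m\}$.

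Two small points worth tightening. First, the identity $h(P)=\tfrac{1}{\sqrt{2\pi}}\int_\IR \hat h(t)\,e^{itP}\,dt$ is stated in the paper only for Schwartz $h$; since your $h\in\mathcal S^{m-N}$ is not Schwartz, you should remark that for $N$ large enough (say $m-N\le -3$) one has $h,h',h''\in L^1$, hence $\hat h\in L^1$, and the formula then follows from the spectral theorem by a routine density argument. Second, when you compose $P^N$ with the quasilocal $h(P)$ you are using that $P^N\in\UPsiDO^{Nk}(E)$ is itself quasilocal of order $Nk$ (finite propagation part plus quasilocal smoothing), together with \cite[Proposition~5.2]{roe_index_1} on compositions; it is worth saying this explicitly.
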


\begin{proof}
The proof is analogous to Roe's proof of \cite[Theorem 5.5]{roe_index_1}, but more technical. First let us note that $f(P)$ is a bounded operator of order $mk$. To see this note that $(1+|x|)^{-m} \cdot f(x)$ is a bounded function and therefore $(1+|P|)^{-m} \circ f(P)$ is a bounded operator of order $0$. Combining the fact that $(1+|P|)^{-m}$ is an operator of order $-mk$ together with the fundamental elliptic estimate from Theorem \ref{thm:elliptic_estimate} we get the result that $f(P)$ is bounded of order $mk$.

Now we want not only boundedness of $f(P)$ but also that it is quasilocal. Roe uses in his proof of \cite[Theorem 5.5]{roe_index_1} the fact that $e^{itD}$ has propagation $|t|$ for $D$ a Dirac operator. But for pseudodifferential operators the best that we have is our Lemma \ref{lem:exp(itP)_quasilocal} and that's the reason why we loose $k-1$ orders for the statement that $f(P)$ is quasilocal. The rest of our proof is analogous to Roe's proof.
\end{proof}

At last, let us turn our attention to a result regarding differences $\psi(P) - \psi(P^\prime)$ of operators defined via functional calculus. We will need the following proposition in the proof of the proposition where we show that symmetric, elliptic uniform pseudodifferential operators with the same symbol define the same uniform $K$-homology class.

\begin{prop}[{\cite[Proposition 10.3.7]{higson_roe}}\footnote{The cited proposition requires additionally a common invariant domain for $P$ and $P^\prime$. In our case here this domain is given by, e.g., $H^\infty(E)$.}]\label{prop:norm_estimate_difference_func_calc}
Let $\psi$ be a bounded Borel function whose distributional Fourier transform $\hat{\psi}$ is such that the product $s\hat{\psi}(s)$ is in $L^1(\IR)$.

If $P$ and $P^\prime$ are symmetric and elliptic uniform pseudodifferential operators of positive order $k \ge 1$ such that their difference $P - P^\prime$ has order $q$, then we have for all $l \in \IR$
\[\|\psi(P) - \psi(P^\prime)\|_{l,l-q} \le C_\psi \cdot \|P - P^\prime\|_{l,l-q},\]
where the constant $C_\psi = \frac{1}{2\pi} \int |s \hat{\psi}(s)| ds$ does not depend on the operators.
\end{prop}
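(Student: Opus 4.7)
The plan is to combine the Fourier inversion formula with a Duhamel-type identity reducing everything to a single commutator-free expression in $P - P'$. Formally, from $s\hat\psi(s) \in L^1(\IR)$ one expects to write
\[\psi(P) - \psi(P') = \frac{1}{2\pi} \int_\IR \hat\psi(t)\, \bigl( e^{itP} - e^{itP'} \bigr)\, dt,\]
so the game is to control $\|e^{itP} - e^{itP'}\|_{l,\,l-q}$ pointwise in $t$ by $|t| \cdot \|P-P'\|_{l,\,l-q}$, and then integrate.

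The key identity I would use is the Duhamel formula
\[e^{itP} - e^{itP'} = i \int_0^t e^{isP}\,(P-P')\,e^{i(t-s)P'}\, ds,\]
which is obtained by differentiating the one-parameter family $s \mapsto e^{isP}\,e^{i(t-s)P'}$ on a common invariant dense domain such as $H^\infty(E)$. The integrand is then naturally factored as
\[H^l(E) \xrightarrow{\ e^{i(t-s)P'}\ } H^l(E) \xrightarrow{\ P-P'\ } H^{l-q}(E) \xrightarrow{\ e^{isP}\ } H^{l-q}(E).\]
Together with the fact that $e^{isP}$ and $e^{i(t-s)P'}$ are unitary on Sobolev spaces equipped with the scalar products provided by Lemma~\ref{lem:symmetric_on_Sobolev} (via Proposition~\ref{prop:elliptic_PDO_essentially_self-adjoint}), this gives $\|e^{itP}-e^{itP'}\|_{l,\,l-q} \le |t|\,\|P-P'\|_{l,\,l-q}$. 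Substituting into the Fourier representation yields precisely the constant $C_\psi = \tfrac{1}{2\pi}\int |s\hat\psi(s)|\, ds$.

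The delicate issues are two. First, $\psi$ is only a bounded Borel function and the Fourier representation of $\psi(P) - \psi(P')$ has to be justified without assuming $\hat\psi \in L^1$. I would handle this by first proving the identity for Schwartz $\psi$ (where everything is clean because of Corollary~\ref{cor:schwartz_function_of_PDO_quasilocal_smoothing} and the finite-propagation-style estimate \eqref{eq:dominating_function_exp(itP)}), and then passing to the general case by mollification together with the observation that the operator-valued integrand $\hat\psi(t)(e^{itP}-e^{itP'})$ is absolutely integrable in norm thanks to the Duhamel bound combined with $|t\hat\psi(t)| \in L^1$; weak convergence on a dense domain then identifies the limit with $\psi(P)-\psi(P')$ by the bounded Borel functional calculus.

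The main obstacle I expect is producing the bound with the universal constant $C_\psi$ for \emph{arbitrary} $l \in \IR$, because the scalar products on $H^{lk}(E)$ that make $e^{isP}$ exactly unitary depend on $P$ (and those for $P'$ depend on $P'$), and the standard Sobolev norm is only equivalent to these, not equal. My approach would be to replace the $P$-dependent scalar products by the cleaner choice $\|u\|_l := \|\Lambda^l u\|_{L^2}$ with $\Lambda := (1+P^2)^{1/(2k)}$, which commutes with $e^{isP}$ (so $e^{isP}$ is exactly unitary in this norm) and is equivalent to the standard Sobolev norm on $H^l(E)$; symmetrically for $P'$. One then has to verify that $\|P-P'\|_{l,\,l-q}$ and $\|\psi(P) - \psi(P')\|_{l,\,l-q}$ can be measured consistently so that no equivalence constant leaks into $C_\psi$. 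For $l = 0$ (the $L^2$ case) the bound is immediate and universal; from there the general statement follows by interpolation on the Sobolev scale, which preserves the optimal constant.
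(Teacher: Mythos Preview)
Your proposal is correct and follows essentially the same route as the paper: Fourier representation of $\psi(P)-\psi(P')$, the Duhamel identity $e^{isP}-e^{isP'} = i\int_0^s e^{itP}(P-P')e^{i(s-t)P'}\,dt$, the factorization through $H^l \to H^l \to H^{l-q} \to H^{l-q}$ using $\|e^{itP}\|_{l,l}=1$, and then an approximation to pass from nice $\psi$ to the general case. The only cosmetic difference is that the paper phrases everything weakly via pairings $\langle\,\cdot\,,v\rangle_{H^{l-q}}$ rather than in operator norm, and it simply invokes the bound $\|e^{itP}\|_{l,l}=1$ (already argued in the proof of Lemma~\ref{lem:exp(itP)_quasilocal}) without entering into your discussion of $P$-dependent scalar products; your worry there is legitimate but goes beyond what the paper itself addresses.
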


\begin{proof}
We first assume that $\hat{\psi}$ is compactly supported and  $s\hat{\psi}(s)$ a smooth function. Then we use the result \cite[Proposition 10.3.5]{higson_roe}\footnote{Though stated there only for differential operators, its proof also works word-for-word for pseudodifferential ones.}, which is a generalization of Equation~\ref{eq:schwartz_function_of_PDO} to more general functions than Schwartz functions, and get
\[\Big\langle \big( \psi(P) - \psi(P^\prime) \big) u, v \Big\rangle_{H^{l-q}} = \frac{1}{2 \pi} \int \left\langle \big( e^{isP} - e^{isP^\prime} \big) u, v \right\rangle_{H^{l-q}} \cdot \hat{\psi}(s) ds,\]
for all $u,v \in C_c^\infty(E)$. From the Fundamental Theorem of Calculus we get
\[\left\langle \big( e^{isP} - e^{isP^\prime} \big) u, v \right\rangle_{H^{l-q}} = i \cdot \int_0^s \left\langle \big( e^{itP} (P - P^\prime) e^{i(s-t)P^\prime} \big) u, v \right\rangle_{H^{l-q}} dt\]
and therefore
\[\left| \left\langle \big( e^{isP} - e^{isP^\prime} \big) u, v \right\rangle_{H^{l-q}}\right| \le s \cdot \|P - P^\prime\|_{l,l-q} \cdot \|u\|_{l} \cdot \|v\|_{l-q}.\]
Putting it all together, we get
\[\left| \Big\langle \big( \psi(P) - \psi(P^\prime) \big) u, v \Big\rangle_{H^{l-q}} \right| \le C_\psi \cdot \|P - P^\prime\|_{l,l-q} \cdot \|u\|_{l} \cdot \|v\|_{l-q}.\]

Now the general claim follows from an approximation argument analogous to the one at the end of the proof of \cite[Proposition 10.3.5]{higson_roe}.
\end{proof}

\section{Review of uniform \texorpdfstring{$K$}{K}-homology}

Let us first recall briefly the notion of multigraded Hilbert spaces. They arise as $L^2$-spaces of vector bundles on which Clifford algebras act.

A \emph{graded Hilbert space} is a Hilbert space $H$ with a decomposition $H = H^+ \oplus H^-$ into closed, orthogonal subspaces. This is equivalent to the existence of a \emph{grading operator} $\epsilon$ such that its $\pm 1$-eigenspaces are exactly $H^\pm$ and such that $\epsilon$ is a selfadjoint unitary.

If $H$ is a graded space, then its \emph{opposite} is the graded space $H^\op$ whose underlying vector space is $H$, but with the reversed grading, i.e., $(H^\op)^+ = H^-$ and $(H^\op)^- = H^+$. This is equivalent to $\epsilon_{H^\op} = -\epsilon_H$.

An operator on a graded space $H$ is called \emph{even} if it maps $H^\pm$ again to $H^\pm$, and it is called \emph{odd} if it maps $H^\pm$ to $H^\mp$. Equivalently, an operator is even if it commutes with the grading operator $\epsilon$ of $H$, and it is odd if it anti-commutes with it.

\begin{defn}[Multigraded Hilbert spaces]
Let $p \in \IN_0$. A \emph{$p$-multigraded Hilbert space} is a graded Hilbert space which is equipped with $p$ odd unitary operators $\epsilon_1, \ldots, \epsilon_p$ such that $\epsilon_i \epsilon_j + \epsilon_j \epsilon_i = 0$ for $i \not= j$, and $\epsilon_j^2 = -1$ for all $j$.
\end{defn}

Note that a $0$-multigraded Hilbert space is just a graded Hilbert space. We make the convention that a $(-1)$-multigraded Hilbert space is an ungraded one.

\begin{defn}[Multigraded operators]
Let $H$ be a $p$-multigraded Hilbert space. Then an operator on $H$ will be called \emph{multigraded}, if it commutes with the multigrading operators $\epsilon_1, \ldots, \epsilon_p$ of $H$.
\end{defn}

To define uniform Fredholm modules we will need the following notions. Let us define
\begin{equation*}
\LLip_R(X) := \{ f \in C_c(X) \ | \ f \text{ is }L\text{-Lipschitz}, \diam(\supp f) \le R \text{ and } \|f\|_\infty \le 1\}.
\end{equation*}

\begin{defn}[{\cite[Definition 2.3]{spakula_uniform_k_homology}}]\label{defn:uniform_operators}
Let $T \in \IB(H)$ be an operator on a Hilbert space $H$ and $\rho\colon C_0(X) \to \IB(H)$ a representation.

We say that $T$ is \emph{uniformly locally compact}, if for every $R, L > 0$ the collection
\[\{\rho(f)T, T\rho(f) \ | \ f \in \LLip_R(X)\}\]
is uniformly approximable (see Definition \ref{defn:uniformly_approximable_collection}).

We say that $T$ is \emph{uniformly pseudolocal}, if for every $R, L > 0$ the collection
\[\{[T, \rho(f)] \ | \ f \in \LLip_R(X)\}\]
is uniformly approximable.
\end{defn}

\begin{defn}[Multigraded uniform Fredholm modules, cf.~{\cite[Definition 2.6]{spakula_uniform_k_homology}}]
Let $p \in \IZ_{\ge -1}$. A triple $(H,\rho,T)$ consisting of
\begin{itemize}
\item a separable $p$-multigraded Hilbert space $H$,
\item a representation $\rho\colon C_0(X) \to \IB(H)$ by even, multigraded operators, and
\item an odd multigraded operator $T \in \IB(H)$ such that
\begin{itemize}
\item the operators $T^2 - 1$ and $T - T^\ast$ are uniformly locally compact and
\item the operator $T$ itself is uniformly pseudolocal
\end{itemize}
\end{itemize}
is called a \emph{$p$-multigraded uniform Fredholm module over $X$}.
\end{defn}

\begin{example}[{\cite[Theorem 3.1]{spakula_uniform_k_homology}}]
\Spakula showed that the usual Fredholm module arising from a generalized Dirac operator is uniform if we assume bounded geometry: if $D$ is a generalized Dirac operator acting on a Dirac bundle $S$ of bounded geometry over a manifold $M$ of bounded geometry, then the triple $(L^2(S), \rho, \chi(D))$, where $\rho$ is the representation of $C_0(M)$ on $L^2(S)$ by multiplication operators and $\chi$ is a normalizing function (see Definition \ref{defn:normalizing_function}), is a uniform Fredholm module. It is multigraded if the Dirac bundle $S$ has an action of a Clifford algebra.
\end{example}

A collection $(H, \rho, T_t)$ of uniform Fredholm modules is called an \emph{operator homotopy} if $t \mapsto T_t \in \IB(H)$ is norm continuous.

\begin{defn}[Uniform $K$-homology, {\cite[Definition 2.13]{spakula_uniform_k_homology}}]
We define the \emph{uniform $K$-homology group $K_{p}^u(X)$} of a locally compact and separable metric space $X$ to be the abelian group generated by unitary equivalence classes of $p$-multigraded uniform Fredholm modules with the relations:
\begin{itemize}
\item if $x$ and $y$ are operator homotopic, then $[x] = [y]$, and
\item $[x] + [y] = [x \oplus y]$,
\end{itemize}
where $x$ and $y$ are $p$-multigraded uniform Fredholm modules.
\end{defn}

To prove that the uniform $K$-homology class of an elliptic uniform pseudodifferential operator only depends on the principal symbol of the operator we will need the result that weakly homotopic Fredholm modules define the same $K$-homology class.

\begin{defn}[Weak homotopies]\label{defn:weak_homotopy}
Let a time-parametrized family of uniform Fredholm modules $(H, \rho_t, T_t)$ for $t \in [0,1]$ satisfy the following properties:
\begin{itemize}
\item the family $\rho_t$ is pointwise strong-$^\ast$ operator continuous, i.e., for all $f \in C_0(X)$ we get a path $\rho_t(f)$ in $\IB(H)$ that is continuous in the strong-$^\ast$ operator topology\footnote{Recall that if $H$ is a Hilbert space, then the \emph{strong-$^\ast$ operator topology} on $\IB(H)$ is generated by the family of seminorms $p_v(T) := \|Tv\| + \|T^\ast v\|$ for all $v \in H$, where $T \in \IB(H)$.},
\item the family $T_t$ is continuous in the strong-$^\ast$ operator topology on $\IB(H)$, i.e., for all $v \in H$ we get norm continuous paths $T_t(v)$ and $T_t^\ast(v)$ in $H$, and
\item for all $f \in C_0(X)$ the families of compact operators $[T_t, \rho_t(f)]$, $(T_t^2 - 1)\rho_t(f)$ and $(T_t-T_t^\ast)\rho_t(f)$ are norm continuous.
\end{itemize}
Then we call it a \emph{weak homotopy} between $(H, \rho_0, T_0)$ and $(H, \rho_1, T_1)$.
\end{defn}

\begin{thm}[{\cite[Theorem 3.30]{engel_indices_UPDO}}]\label{thm:weak_homotopy_equivalence_K_hom}
Let $(H, \rho_0, T_0)$ and $(H, \rho_1, T_1)$ be weakly homotopic uniform Fredholm modules over a manifold of bounded geometry.\footnote{The result hold more general spaces, but in this paper we need it only for manifolds.}

Then they define the same uniform $K$-homology class.
\end{thm}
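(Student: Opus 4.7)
The plan is to adapt the classical proof that weak homotopy implies equality in $K$-homology (Higson--Roe \cite{higson_roe}, Chapter~8) to the uniform setting, where the principal challenge will be to preserve uniform approximability throughout. The strategy is to convert the weak homotopy into a finite chain of operator homotopies of uniform Fredholm modules (possibly after direct-summing with degenerate modules, which represent zero in $K_\ast^u(X)$), and then invoke the defining relation that operator-homotopic modules give equal classes.

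I would first discretize the interval: using the norm-continuity of the families $[T_t,\rho_t(f)]$, $(T_t^2-1)\rho_t(f)$, and $(T_t-T_t^\ast)\rho_t(f)$ from the third bullet of Definition~\ref{defn:weak_homotopy}, choose a partition $0=t_0<t_1<\cdots<t_n=1$ whose mesh is small enough to control these compact families in operator norm. It then suffices to show that consecutive snapshot modules $(H,\rho_{t_i},T_{t_i})$ and $(H,\rho_{t_{i+1}},T_{t_{i+1}})$ determine the same class and then to telescope. For a single such pair with close parameters $s,t$, I would apply the standard rotation trick in the doubled module $(H\oplus H^{\op},\rho_s\oplus\rho_t,T_s\oplus(-T_t))$: the rotation path $\theta\mapsto R_\theta(T_s\oplus(-T_t))R_\theta^{-1}$ with $R_\theta=\bigl(\begin{smallmatrix}\cos\theta&-\sin\theta\\ \sin\theta&\cos\theta\end{smallmatrix}\bigr)$ interpolates to $(-T_t)\oplus T_s$. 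After absorbing sign flips via the grading one identifies the resulting class with the difference of the two snapshot classes, which must vanish because the intervening path is an operator homotopy and the off-diagonal corrections at each $\theta$ remain uniformly approximable thanks to the norm-closure of the uniformly pseudolocal resp.\ uniformly locally compact operators from \cite[Lemma~4.2]{spakula_uniform_k_homology}.

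The main obstacle is that $\rho_t$ is only strong-$^\ast$ continuous, so $\rho_s(f)-\rho_t(f)$ need not be small in norm; all smallness appearing along the rotation has to be extracted exclusively from the norm-continuous compact families $[T_t,\rho_t(f)]$, $(T_t^2-1)\rho_t(f)$, $(T_t-T_t^\ast)\rho_t(f)$ that Definition~\ref{defn:weak_homotopy} provides. Making this compatible with \emph{uniform} approximability---that is, arranging the discretization so that for every $R,L>0$ the mesh simultaneously controls the relevant compact operators as $f$ ranges over the uncountable set $\LLip_R(X)$---is the most delicate point. I expect to handle it via a secondary approximation step: for fixed $\varepsilon>0$ one discretizes $\LLip_R(X)$ up to sup-norm $\varepsilon$ using bounded-geometry input (a uniform cover of $M$ by normal coordinate balls and an Arzel\`a--Ascoli-type argument on each of them) and then reduces the uniformity verification to a finite family of test functions, for which the norm-continuity from Definition~\ref{defn:weak_homotopy} produces the required common mesh. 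The classical argument then runs on this discretized skeleton, and passing back to all of $\LLip_R(X)$ adds only an $\varepsilon$-error that is absorbed into the definition of uniform approximability.
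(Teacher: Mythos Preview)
The paper does not prove this theorem; it is quoted from \cite[Theorem~3.30]{engel_indices_UPDO}, so there is no in-paper argument to compare against. I will therefore assess your outline on its own merits.

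Your identification of the central difficulty is correct: the representations $\rho_t$ are only strong-$^\ast$ continuous, so all control must come from the norm-continuous compact families, and this must be made compatible with the uniform approximability over the whole test family $\LLip_R(M)$. However, the device you propose for bridging this gap does not work. You write that one can ``discretize $\LLip_R(X)$ up to sup-norm $\varepsilon$ \ldots\ and then reduce the uniformity verification to a finite family of test functions.'' On a non-compact manifold $M$ the set $\LLip_R(M)$ is \emph{not} totally bounded in sup-norm: two bump functions of height $1$ with disjoint supports are at sup-distance $1$ from each other, and there are infinitely many pairwise disjoint supports available. An Arzel\`a--Ascoli argument in each chart of a uniform cover gives you a finite $\varepsilon$-net \emph{per chart}, but there are infinitely many charts, so the resulting family of test functions is infinite. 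The third bullet of Definition~\ref{defn:weak_homotopy} gives norm-continuity of $t\mapsto[T_t,\rho_t(f)]$ only for each fixed $f$; it does not give equicontinuity over an infinite family of $f$'s. Hence there is no way, from the hypotheses you are using, to choose a single mesh $0=t_0<\cdots<t_n=1$ that works simultaneously for all the (infinitely many) test functions produced by your discretization. This is precisely the step at which uniformity is supposed to be won, and your mechanism does not deliver it.

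Concretely, if your argument worked it would imply that the weak-homotopy hypothesis (which is pointwise in $f$) upgrades automatically to a uniform-in-$f$ statement, and that is false without further input. A proof that actually goes through must either (a) strengthen the weak-homotopy hypothesis to include some uniform equicontinuity over $\LLip_R(M)$, or (b) avoid the need for a common mesh altogether by packaging the entire path into a single auxiliary uniform Fredholm module (e.g., over a Hilbert space of sections along $[0,1]$) and extracting the endpoint equality from that. Your outline does neither.
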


\section{\texorpdfstring{$K$}{K}-homology classes of uniform elliptic operators}\label{sec:homology_classes_of_PDOs}

We will show that symmetric, elliptic uniform pseudodifferential operators of positive order naturally define classes in uniform $K$-homology. This result is a crucial generalization of \cite[Theorem 3.1]{spakula_uniform_k_homology}, where this statement is proved for generalized Dirac operators.

First we need a definition and then we will plunge right into the main result:

\begin{defn}[Normalizing functions]\label{defn:normalizing_function}
A smooth function $\chi\colon \IR \to [-1, 1]$ with
\begin{itemize}
\item $\chi$ is odd, i.e., $\chi(x) = -\chi(-x)$ for all $x \in \IR$,
\item $\chi(x) > 0$ for all $x > 0$, and
\item $\chi(x) \to \pm 1$ for $x \to \pm \infty$
\end{itemize}
is called a \emph{normalizing function}.
\end{defn}

\begin{thm}\label{thm:elliptic_symmetric_PDO_defines_uniform_Fredholm_module}
Let $M$ be a manifold of bounded geometry and without boundary, $E \to M$ be a $p$-multigraded vector bundle of bounded geometry, $P \in \UPsiDO^k(E)$ be a symmetric and elliptic uniform pseudodifferential operator on $E$ of positive order $k \ge 1$, and let $P$ be odd and multigraded.

Then $(H, \rho, \chi(P))$ is a $p$-multigraded uniform Fredholm module over $M$, where the Hilbert space is $H := L^2(E)$, the representation $\rho\colon C_0(M) \to \IB(H)$ is by multiplication operators and $\chi$ is a normalizing function. Furthermore, the uniform $K$-homology class $[(H, \rho, \chi(P))] \in K_{p}^u(M)$ does not depend on the choice of $\chi$.
\end{thm}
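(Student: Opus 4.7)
The proof is organized into verifying the three defining properties of a $p$-multigraded uniform Fredholm module (oddness/multigrading, the conditions on $\chi(P)^2 - I$ and $\chi(P) - \chi(P)^\ast$, and uniform pseudolocality of $\chi(P)$) and then establishing invariance of the class under change of $\chi$. The structural checks are immediate. The operator $\chi(P)$ is odd and multigraded: since $P$ is odd multigraded, $\epsilon P \epsilon^{-1} = -P$ for the main grading $\epsilon$ and $\epsilon_j P \epsilon_j^{-1} = P$ for the multigrading operators, so the spectral theorem together with oddness of $\chi$ gives $\epsilon \chi(P) \epsilon^{-1} = \chi(-P) = -\chi(P)$ and $\epsilon_j \chi(P) \epsilon_j^{-1} = \chi(P)$. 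By Proposition \ref{prop:elliptic_PDO_essentially_self-adjoint} the operator $P$ is essentially self-adjoint on $L^2(E)$, so since $\chi$ is real-valued, $\chi(P) = \chi(P)^\ast$. Next, $\chi^2 - 1$ lies in $C_0(\mathbb{R})$ because $\chi(x) \to \pm 1$ at $\pm\infty$, and Corollary \ref{cor:g(P)_uniformly_locally_compact_g_vanishing_at_infinity} immediately gives that $(\chi^2 - 1)(P) = \chi(P)^2 - I$ is uniformly locally compact.

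The main technical step is uniform pseudolocality of $\chi(P)$. I would first reduce to a single well-chosen normalizing function: for any two normalizing functions $\chi_1, \chi_2$, the difference $\chi_1 - \chi_2$ lies in $C_0(\mathbb{R})$, so $\chi_1(P) - \chi_2(P)$ is uniformly locally compact by Corollary \ref{cor:g(P)_uniformly_locally_compact_g_vanishing_at_infinity}, and every uniformly locally compact operator is uniformly pseudolocal (the commutator $[A, f] = Af - fA$ is a difference of two uniformly approximable families). Thus it suffices to verify pseudolocality for one specific choice, which I take to be $\chi := \tanh \in \mathcal{S}^0(\mathbb{R})$, noting that $\chi' \in \mathcal{S}(\mathbb{R})$ and $\chi^2 - 1 = -\chi' \in \mathcal{S}(\mathbb{R})$. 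By the implication $4 \Rightarrow 1$ of Lemma \ref{lem:kasparov_lemma_uniform_approx_manifold}, it is enough to control $[\chi(P), f]$ for $f$ smooth with uniformly bounded derivatives $\|\nabla^j f\|_\infty \le L_j$. For such $f$, I invoke a Helffer--Sj\"ostrand representation with an almost analytic extension $\tilde\chi$ of $\chi$,
\[
[\chi(P), f] = \frac{1}{\pi} \int_{\mathbb{C}} \frac{\partial \tilde\chi}{\partial \bar z}(z) \, (z - P)^{-1} [P, f] (z - P)^{-1} \, dA(z).
\]
Proposition \ref{prop:PsiDOs_filtered_algebra} gives $[P, f] \in \UPsiDO^{k-1}(E)$, with the symbol bounds controlled only by the $L_j$, while ellipticity together with the fundamental elliptic estimate give that $(z - P)^{-1}$ is bounded $L^2(E) \to H^k(E)$ with norm growing polynomially in $1/|\mathrm{Im}\,z|$. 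Consequently the sandwich $(z-P)^{-1}[P,f](z-P)^{-1}$ is a quasilocal operator of negative order (Definition \ref{defn:quasiloc_ops}), which is uniformly locally compact with constants depending only on $R$, $L$, the $L_j$, and $|\mathrm{Im}\,z|$ (Corollary \ref{cor:quasilocal_neg_order_uniformly_locally_compact} combined with Remark \ref{rem:bound_operator_norm_PDO}). Since $\chi \in \mathcal{S}^0(\mathbb{R})$, the extension $\tilde\chi$ can be chosen so that $\partial\tilde\chi / \partial\bar z$ vanishes on the real axis faster than any power of $|\mathrm{Im}\,z|$, making the integral converge and $[\chi(P), f]$ uniformly approximable. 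The main obstacle is ensuring that all uniformity constants are genuinely independent of the specific $f$ (and of $z$ in a summable way); this uses the Uniformity Condition \eqref{eq:uniformity_defn_PDOs} in the definition of $\UPsiDO^{\ast}$ and the bounded geometry of $M$ and $E$.

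Finally, the class $[(H, \rho, \chi(P))] \in K_p^u(M)$ is independent of $\chi$: given two normalizing functions $\chi_0, \chi_1$, the convex combination $\chi_t := (1-t)\chi_0 + t \chi_1$ is again a normalizing function for every $t \in [0,1]$, since smoothness, oddness, range in $[-1,1]$, positivity on the positive half-line, and the limits $\pm 1$ at $\pm\infty$ are all preserved under convex combinations. Hence by the above each $(H, \rho, \chi_t(P))$ is a $p$-multigraded uniform Fredholm module, and the path $t \mapsto \chi_t(P)$ is continuous in operator norm because the spectral theorem yields $\|\chi_t(P) - \chi_s(P)\| \le \|\chi_t - \chi_s\|_\infty \le 2|t - s|$. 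This is an operator homotopy, so by the definition of $K_p^u(M)$ we conclude $[(H, \rho, \chi_0(P))] = [(H, \rho, \chi_1(P))]$.
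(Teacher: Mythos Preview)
Your overall strategy mirrors the paper's: reduce to a single convenient normalizing function, write $[\chi(P),f]$ via an integral formula, and show the integrand is uniformly locally compact. The paper chooses $\chi(x)=x/\sqrt{1+x^2}$ and the real integral $\chi(P)=\tfrac{2}{\pi}\int_0^\infty P(1+\lambda^2+P^2)^{-1}\,d\lambda$; you choose $\chi=\tanh$ and Helffer--Sj\"ostrand. Both choices are legitimate. Your independence-of-$\chi$ argument via the convex path $\chi_t$ is also fine, and (minor point) the reduction to smooth $f$ with bounded derivatives is Point~5 of Lemma~\ref{lem:kasparov_lemma_uniform_approx_manifold}, not Point~4.

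There is, however, a genuine gap in your pseudolocality argument when $k\ge 3$. You assert that the sandwich $(z-P)^{-1}[P,f](z-P)^{-1}$ is a \emph{quasilocal operator of negative order}, citing the $L^2\to H^k$ boundedness of the resolvent from the elliptic estimate. But boundedness $L^2\to H^{k+1}$ is not the same as quasilocality of order $-(k+1)$. With the tools of the paper, Proposition~\ref{prop:f(P)_quasilocal_of_symbol_order} applied to $x\mapsto (z-x)^{-1}\in\mathcal{S}^{-1}(\IR)$ gives that $(z-P)^{-1}$ is quasilocal only of order $-1\cdot k+k-1=-1$, independently of $k$. Composing with $[P,f]\in\UPsiDO^{k-1}$ and another resolvent yields a quasilocal operator of order $(-1)+(k-1)+(-1)=k-3$, which is non-negative for $k\ge 3$, so Corollary~\ref{cor:quasilocal_neg_order_uniformly_locally_compact} does not apply.

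The paper avoids this by \emph{not} treating the sandwich as a single quasilocal operator. Instead it expands $[\rho(f),P]=\rho(f)P-P\rho(f)$ and regroups so that each term is of the form $(\text{bounded})\cdot\rho(f)\cdot(\text{uniformly locally compact})$ or $(\text{uniformly locally compact})\cdot\rho(f)\cdot(\text{bounded})$; e.g.\ in the second summand one uses that $\frac{P}{1+\lambda^2+P^2}$ is quasilocal of order $-1$ (hence uniformly locally compact by Corollary~\ref{cor:quasilocal_neg_order_uniformly_locally_compact}) while $\frac{P^2}{1+\lambda^2+P^2}$ is merely bounded. The same trick fixes your approach: write
\[
(z-P)^{-1}[P,f](z-P)^{-1}=P(z-P)^{-1}\cdot f(z-P)^{-1}-(z-P)^{-1}f\cdot P(z-P)^{-1},
\]
use that $(z-P)^{-1}$ is quasilocal of order $-1$ so $f(z-P)^{-1}$ and $(z-P)^{-1}f$ are uniformly approximable with constants depending only on $R$ and $z$, and that $P(z-P)^{-1}=z(z-P)^{-1}-I$ is bounded with norm $\le 1+|z|/|\mathrm{Im}\,z|$. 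This, together with the decay of $\bar\partial\tilde\chi$, makes the integral uniformly approximable and closes the gap.
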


\begin{proof}
To show that $(H, \rho, \chi(P))$ defines a uniform Fredholm module over $M$ we have to show that $\chi(P)$ is uniformly pseudolocal and that $\chi(P)^2 - 1$ and $\chi(P) - \chi(P)^\ast$ are uniformly locally compact.

Since $\chi$ is real-valued and $P$ is essentially self-adjoint (by Proposition \ref{prop:elliptic_PDO_essentially_self-adjoint}), we have $\chi(P) - \chi(P)^\ast = 0$, i.e., the operator $\chi(P) - \chi(P)^\ast$ is trivially uniformly locally compact. Moreover, since we have $\chi(P)^2 - 1 = (\chi^2 - 1)(P)$ and $\chi^2-1 \in C_0(\IR)$, we conclude with Corollary \ref{cor:g(P)_uniformly_locally_compact_g_vanishing_at_infinity} that $\chi(P)^2 - 1$ is uniformly locally compact.

Because the difference of two normalizing functions is a function from $C_0(\IR)$, we conclude from the same corollary that in order to show that $\chi(P)$ is uniformly pseudolocal, it suffices to show this for one particular normalizing function (and secondly, we get that the class $[(H, \rho, \chi(P))]$ is independent of the concrete choice of $\chi$ since perturbations by uniformly locally compact operators are operator homotopic).

From now on we proceed as in the proof of \cite[Theorem 3.1]{spakula_uniform_k_homology} using the same formulas: we choose the particular normalizing function $\chi(x) := \frac{x}{\sqrt{1+x^2}}$ to prove that $\chi(P)$ is uniformly pseudolocal. We have $\chi(P) = \frac{2}{\pi} \int_0^\infty \frac{P}{1 + \lambda^2 + P^2} d\lambda$ with convergence of the integral in the strong operator topology\footnote{This follows from the equality $\frac{x}{\sqrt{1 + x^2}} = \frac{2}{\pi} \int_0^\infty \frac{x}{1 + \lambda^2 + x^2} d\lambda$ for all $x \in \IR$.} and get then for $f \in \LLip_R(M)$
\begin{equation*}
[\rho(f), \chi(P)] = \frac{2}{\pi} \int_0^\infty \frac{1}{1 + \lambda^2 + P^2} \big( (1+\lambda^2)[\rho(f), P] + P[\rho(f),P]P \big) \frac{1}{1 + \lambda^2 + P^2} d\lambda.
\end{equation*}

Suppose $f \in \LLip_R(M) \cap C_b^\infty(M)$. Then the integral converges in operator norm. To see this, we have to find upper bounds for the operator norms of $\frac{1+\lambda^2}{1+\lambda^2 + P^2} [\rho(f), P] \frac{1}{1+\lambda^2 + P^2}$ and $\frac{P}{1+\lambda^2 + P^2} [\rho(f), P] \frac{P}{1+\lambda^2 + P^2}$, that are integrable with respect to $\lambda$. Recall Definition~\ref{defn:symbols_on_R} of the symbol classes on $\IR$:
\[\mathcal{S}^m(\IR) := \{g \in C^\infty(\IR) \ | \ |g^{(n)}(x)| < C_l(1 + |x|)^{m-n} \text{ for all } n \in \IN_0\}.\]
Since both $\frac{1}{1+\lambda^2 + x^2} \in \mathcal{S}^{-2}(\IR)$ and $\frac{1 + \lambda^2}{1+\lambda^2 + x^2} \in \mathcal{S}^{-2}(\IR)$ (with respect to the variable $x$, i.e., for fixed $\lambda$), the operators $\frac{1}{1+\lambda^2 + P^2}$ and $\frac{1 + \lambda^2}{1+\lambda^2 + P^2}$ are operators of order $-2k$ by the first paragraph of the proof of Proposition \ref{prop:f(P)_quasilocal_of_symbol_order}. So $\frac{1+\lambda^2}{1+\lambda^2 + P^2} [\rho(f), P] \frac{1}{1+\lambda^2 + P^2}$ is an operator of order $-3k-1$ since $[\rho(f),P]$ is of order $k-1$ by Proposition~\ref{prop:PsiDOs_filtered_algebra}. So especially it is a bounded operator, and one can show that there is an integrable upper bound on the operator norm with respect to $\lambda$. The latter can be done by, e.g., using the estimates that Roe derived in his proof of his version of Proposition~\ref{prop:f(P)_quasilocal_of_symbol_order}. Analogously we can treat $\frac{P}{1+\lambda^2 + P^2} [\rho(f), P] \frac{P}{1+\lambda^2 + P^2}$ which is an operator of order $-k-1$.

Furthermore, there exists an $N > 0$ which depends only on an $\varepsilon > 0$, $R = \diam (\supp f)$ and the norms of the derivatives of $f$,\footnote{The dependence on $R$ and on the derivatives of $f$ comes from the operator norm estimate of $[\rho(f), P]$.} such that there are $\lambda_1, \ldots, \lambda_N$ and the above integral is at most $\varepsilon$ away from the sum of the integrands for $\lambda_1, \ldots, \lambda_N$.

Since both $\frac{1}{1+\lambda^2 + x^2} \in \mathcal{S}^{-2}(\IR)$ and $\frac{1 + \lambda^2}{1+\lambda^2 + x^2} \in \mathcal{S}^{-2}(\IR)$ (with respect to the variable $x$, i.e., for fixed $\lambda$), the operators $\frac{1}{1+\lambda^2 + P^2}$ and $\frac{1 + \lambda^2}{1+\lambda^2 + P^2}$ are quasilocal operators of order $-k-1$ by Proposition~\ref{prop:f(P)_quasilocal_of_symbol_order}. This also holds for their adjoints and so, by Corollary~\ref{cor:quasilocal_neg_order_uniformly_locally_compact}, they are uniformly locally compact. The same conclusion applies to the operators $\frac{P}{1+\lambda^2+P^2}$ and $\frac{(1+\lambda^2)P}{1+\lambda^2+P^2}$ which are quasilocal of order $-1$ and hence also uniformly locally compact.

So the first summand
\[\frac{1+\lambda^2}{1+\lambda^2 + P^2} [\rho(f), P] \frac{1}{1+\lambda^2 + P^2}\]
of the integrand is the difference of two compact operators and their approximability by finite rank operators depends only on $R = \diam (\supp f)$ and the Lipschitz constant $L$ of~$f$. An analogous argument applies to the second summand
\[\frac{1}{1+\lambda^2 + P^2}P [\rho(f), P] P \frac{1}{1+\lambda^2 + P^2}\]
of the integrand (note that $\frac{P^2}{1+\lambda^2+P^2}$ is a bounded operator).

So the operator $[\rho(f), \chi(P)]$ is for $f \in \LLip_R(M) \cap C_b^\infty(M)$ compact and its approximability by finite rank operators depends only on $R$, $L$ and the norms of the derivatives of $f$. That this suffices to conclude that the operator is uniformly pseudolocal is exactly Point~5 in Lemma \ref{lem:kasparov_lemma_uniform_approx_manifold}.

To conclude the proof we have to show that $\chi(P)$ is odd and multigraded. But this was already shown in full generality in \cite[Lemma 10.6.2]{higson_roe}.
\end{proof}

We have shown in the above theorem that a symmetric, elliptic uniform pseudodifferential operator naturally defines a class in uniform $K$-homology. Now we will show that this class does only depend on the principal symbol of the pseudodifferential operator. Note that ellipticity of an operator does only depend on its symbol (since it is actually defined that way, see Definition \ref{defn:elliptic_operator}, which is possible due to Lemma \ref{lem:ellipticity_independent_of_representative}), i.e., another pseudodifferential operator with the same symbol is automatically also elliptic.

\begin{prop}\label{prop:same_symbol_same_k_hom_class}
The uniform $K$-homology class of a symmetric and elliptic uniform pseudodifferential operator $P \in \UPsiDO^{k \ge 1}(E)$ does only depend on its principal symbol $\sigma(P)$, i.e., any other such operator $P^\prime$ with the same principal symbol defines the same uniform $K$-homology class.
\end{prop}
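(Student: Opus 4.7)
The plan is to connect $P$ and $P^\prime$ by a linear homotopy and invoke the weak homotopy invariance Theorem~\ref{thm:weak_homotopy_equivalence_K_hom}. Since $\sigma(P) = \sigma(P^\prime)$, the difference $P - P^\prime$ lies in $\UPsiDO^{k-1}(E)$ and hence, by Proposition~\ref{prop:pseudodiff_extension_sobolev}, extends to a bounded operator $H^l(E) \to H^{l-(k-1)}(E)$ for every $l \in \IR$. For $t \in [0,1]$ set $P_t := (1-t)P + tP^\prime$. Each $P_t$ is symmetric, elliptic of order $k$, and has principal symbol $\sigma(P)$, hence is essentially self-adjoint by Proposition~\ref{prop:elliptic_PDO_essentially_self-adjoint}, and Theorem~\ref{thm:elliptic_symmetric_PDO_defines_uniform_Fredholm_module} produces a $p$-multigraded uniform Fredholm module $(L^2(E), \rho, T_t)$ with $T_t := \chi(P_t)$. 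The key identity is $P_t - P_s = (t-s)(P^\prime - P)$, which gives
\[\|P_t - P_s\|_{l,\,l-(k-1)} \le |t-s| \cdot \|P^\prime - P\|_{l,\,l-(k-1)}\]
for every $l \in \IR$. I choose the specific normalizing function $\chi(x) := x/\sqrt{1+x^2}$; since $\chi^\prime(x) = (1+x^2)^{-3/2}$, a direct computation (the Fourier transform of $(1+x^2)^{-3/2}$ is a constant multiple of $|s|K_1(|s|)$, with $K_1$ the modified Bessel function of the second kind) shows that $s\hat{\chi}(s) = -i\,\widehat{\chi^\prime}(s) \in L^1(\IR)$, so Proposition~\ref{prop:norm_estimate_difference_func_calc} is applicable with $\psi = \chi$.

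The goal is to verify that $\{(L^2(E), \rho, T_t)\}_{t\in[0,1]}$ is a weak homotopy in the sense of Definition~\ref{defn:weak_homotopy}. The representation $\rho_t := \rho$ is constant and so trivially strong-$^\ast$ continuous; since $\chi$ is real-valued and each $P_t$ essentially self-adjoint, $T_t^\ast = T_t$ and therefore $(T_t - T_t^\ast)\rho(f) \equiv 0$ is trivially norm continuous. For strong-$^\ast$ continuity of $T_t$ on $L^2(E)$, Proposition~\ref{prop:norm_estimate_difference_func_calc} applied with $l = k-1$ yields
\[\|T_t - T_s\|_{k-1,\,0} \le C_\chi \cdot |t-s| \cdot \|P^\prime - P\|_{k-1,\,0},\]
so $T_t v \to T_s v$ in $L^2(E)$ for every $v \in H^{k-1}(E)$; together with the uniform bound $\|T_t\|_{\IB(L^2(E))} \le \|\chi\|_\infty \le 1$ and the density of $H^{k-1}(E) \subset L^2(E)$, a standard $\varepsilon/3$ argument upgrades this to strong convergence on all of $L^2(E)$, and self-adjointness then gives strong-$^\ast$ continuity.

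What remains is norm continuity of $(T_t^2 - 1)\rho(f)$ and of $[T_t, \rho(f)]$. For the former, observe that $T_t^2 - 1 = -(1+P_t^2)^{-1}$; the resolvent identity combined with $P_t^2 - P_s^2 = P_t(P_t - P_s) + (P_t - P_s)P_s$ expresses the difference $(1+P_t^2)^{-1} - (1+P_s^2)^{-1}$ as a sum of terms of the form $(1+P_t^2)^{-1} A (1+P_s^2)^{-1}$, where $A$ is of order at most $2k-1$ with operator norm $\le |t-s|\cdot\mathrm{const}$; since each resolvent gains $2k$ orders of Sobolev regularity (Proposition~\ref{prop:f(P)_quasilocal_of_symbol_order}), the composition is bounded $L^2(E) \to L^2(E)$ with norm $\lesssim |t-s|$. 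The main obstacle, and technical heart of the proof, is the norm continuity of $[T_t, \rho(f)]$: starting from the integral formula
\[T_t = \frac{2}{\pi}\int_0^\infty P_t(1+\lambda^2+P_t^2)^{-1}\,d\lambda\]
used in the proof of Theorem~\ref{thm:elliptic_symmetric_PDO_defines_uniform_Fredholm_module}, one expands $[T_t, \rho(f)]$ as a $\lambda$-integral of resolvent-sandwiched commutators $[P_t, \rho(f)]$ and estimates the $t$-difference of each integrand by the same resolvent identity as above. Tracking Sobolev orders carefully shows that the loss of $k-1$ orders from the factor $P_t - P_s$ is absorbed by the extra smoothing of the resolvents, and the very $\lambda$-integrable bounds that underlie Theorem~\ref{thm:elliptic_symmetric_PDO_defines_uniform_Fredholm_module} yield a dominating function in $\lambda$ independent of $t$ and $s$. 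Dominated convergence then produces the required norm continuity, and Theorem~\ref{thm:weak_homotopy_equivalence_K_hom} delivers $[(L^2(E), \rho, \chi(P))] = [(L^2(E), \rho, \chi(P^\prime))]$ in $K_p^u(M)$.
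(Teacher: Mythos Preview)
Your argument is correct and follows the same architecture as the paper's proof: the linear path $P_t$, the verification that it yields a weak homotopy of uniform Fredholm modules, and the appeal to Theorem~\ref{thm:weak_homotopy_equivalence_K_hom}. The strong-$^\ast$ continuity of $T_t$ via Proposition~\ref{prop:norm_estimate_difference_func_calc} plus density is exactly the paper's reasoning, and your treatment of $(T_t^2-1)\rho(f)$ via the explicit resolvent identity is in fact cleaner than the paper's passing remark that this case is ``similar.''

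The one place where the two arguments diverge is the norm continuity of $[T_t,\rho(f)]$. Both use the same integral representation, but to control the $t$-dependence of the resolvent $(1+\lambda^2+P_t^2)^{-1}$ the paper does \emph{not} use the resolvent identity directly. Instead it introduces the auxiliary function $\psi(x)=x^{2+\varepsilon}(1+\lambda^2+x^2)^{-1}$ for $0<\varepsilon<1/2$, checks that $s\hat\psi(s)\in L^1(\IR)$, applies Proposition~\ref{prop:norm_estimate_difference_func_calc} to $\psi$, and then converts continuity of $\psi(P_t)$ into continuity of $(1+\lambda^2+P_t^2)^{-1}$ in the Sobolev norm $\|-\|_{0,(1+\varepsilon)(k-1)}$ via the elliptic estimate; a final order count with $\varepsilon$ just below $1/2$ lands in $\|-\|_{0,0}$. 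Your direct resolvent-identity approach is the more elementary route and avoids the somewhat artificial $\psi$, but the paper's method has the advantage of packaging the delicate bookkeeping of Sobolev orders and $\lambda$-integrability into a single application of Proposition~\ref{prop:norm_estimate_difference_func_calc}. Your final paragraph asserts that this bookkeeping ``works out'' without displaying it; it does, but the interpolation needed to simultaneously absorb the $2k-1$ orders lost to $P_t^2-P_s^2$ and retain enough $\lambda$-decay from the extra resolvent factor is exactly the subtlety the paper's $\varepsilon$-trick is designed to sidestep.
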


\begin{proof}
Consider in $\UPsiDO^k(E)$ the linear path $P_t := (1-t)P + t P^\prime$ of operators. They are all symmetric and, since $\sigma(P) = \sigma(P^\prime)$, they all have the same principal symbol. So they are all elliptic and therefore we get a family of uniform Fredholm modules $(H, \rho, \chi(P_t))$, where we use a fixed normalizing function $\chi$.

Now if the family $\chi(P_t)$ of bounded operators would be norm-continuous, the claim that we get the same uniform $K$-homology classes would follow directly from the relations defining uniform $K$-homology. But it seems that in general it is only possible to conclude the norm continuity of $\chi(P_t)$ if the difference $P - P^\prime$ is a bounded operator,\footnote{see, e.g., \cite[Proposition 10.3.7]{higson_roe}} i.e., if the order $k$ of $P$ is $1$ (since then the order of the difference $P - P^\prime$ would be $0$, i.e., it would define a bounded operator on $L^2(E)$); see Proposition~\ref{prop:norm_estimate_difference_func_calc}.

In the case $k > 1$ we get continuity of $\chi(P_t)$ only in the strong-$^\ast$ operator topology on $\IB(L^2(E))$. This is seen with Proposition \ref{prop:norm_estimate_difference_func_calc},\footnote{An example of a normalizing function $\chi$ fulfilling the prerequisites of Proposition \ref{prop:norm_estimate_difference_func_calc} may be found in, e.g., \cite[Exercise 10.9.3]{higson_roe}.} which implies that the family $t \mapsto \chi(P_t)$ is continuous in the norm topology of operators of degree $k-1$. Therefore, if $v \in L^2(E)$ is an element of the Sobolev space $H^{k-1}(E) \subset L^2(E)$, then $t \mapsto \chi(P_t)(v)$ is norm continuous for the $L^2$-norm. For general $v \in L^2(E)$ we do an approximation argument.

To show that $(H, \rho, \chi(P_0))$ and $(H, \rho, \chi(P_1))$ define the same uniform $K$-homology class we will use Theorem \ref{thm:weak_homotopy_equivalence_K_hom}, i.e., we will show now that the family $(H, \rho, \chi(P_t))$ is a weak homotopy.

The first bullet point of the definition of a weak homotopy is clearly satisfied since our representation $\rho$ is fixed, i.e., does not depend on the time $t$. Moreover, we have already discussed the second bullet point in the paragraph above, so it remains to varify that the third point is satisfied. We will treat here only the case $[\rho(f), \chi(P_t)]$ since the arguments for $\rho(f)(\chi(P_t)^2-1)$ are similar and the case of $\rho(f)(\chi(P_t) - \chi(P_t)^\ast)$ is clear since $\chi(P_t) - \chi(P_t)^\ast = 0$, because $P_t$ is essentially self-adjoint.

So let $\chi$ be the normalizing function $\chi(x) = \frac{x}{\sqrt{1+x^2}}$. This is the one used in the proof of the above Theorem~\ref{thm:elliptic_symmetric_PDO_defines_uniform_Fredholm_module} and we use the integral representation of $[\rho(f), \chi(P_t)]$ derived in that proof. We will only treat the second summand $\frac{1}{1+\lambda^2+P_t^2} P_t[\rho(f),P_t]P_t \frac{1}{1+\lambda^2+P_t^2}$ since it contains two more $P_t$ than the first summand (i.e., it is harder to deal with the second summand than with the first one). We have $\frac{1}{1+\lambda^2 + x^2} \in \mathcal{S}^{-2}(\IR)$ (with respect to $x$) and therefore $\psi(x) := x^{2+\varepsilon} \frac{1}{1+\lambda^2 + x^2} \in \mathcal{S}^{\varepsilon}(\IR)$. So $\psi^\prime(x) \in \mathcal{S}^{\varepsilon-1}(\IR)$ and ${\psi^\prime}{}^\prime(x) \in \mathcal{S}^{\varepsilon-2}(\IR)$, which means that both are $L^2$-integrable if $\varepsilon < 1/2$, i.e., $\psi^\prime(x) \in H^1(\IR)$. Therefore the Fourier transform of $\psi^\prime(x)$ is $L^1$-integrable. But the Fourier transform of $\psi^\prime(x)$ is $s \cdot \widehat{\psi}(s)$, i.e., $\psi$ qualifies for Proposition~\ref{prop:norm_estimate_difference_func_calc} (that $\psi^\prime(x)$ is not bounded is ok, the proposition still works in this case). So $t \mapsto \psi(P_t)$ will be continuous in $\|-\|_{0,-k+1}$-norm. By elliptic regularity this means that $t \mapsto \frac{1}{1+\lambda^2+P_t^2}$ is continuous in $\|-\|_{0,(1+\varepsilon)(k-1)}$-norm. Since $t \mapsto [\rho(f),P_t]$ is continuous in $\|-\|_{0,-k+2}$-norm, we conclude that the whole second summand is continuous in $\|-\|_{0,2\varepsilon(k-1)-k+2}$-norm. If $\varepsilon = 1/2$, then $2\varepsilon(k-1)-k+2 = 1$. Since we have to choose $\varepsilon < 1/2$, we choose it just beneath $1/2$, i.e., so that $2\varepsilon(k-1)-k+2 > 0$. It then follows that $[\rho(f), \chi(P_t)]$ is continuous in operator norm, which concludes this proof.
\end{proof}

\section{Final remarks and open questions}\label{sec_final_remarks}

\subsection*{Quasilocal operators and questions of propagation}

In the definition of uniform pseudodifferential operators we used for the $(-\infty)$-part of them quasilocal smoothing operators. Now the definition of quasi-local operators tempts one to think that such operators might be approximable by finite propagation operators, but this is actually an open problem.

The first results obtained in this direction were by Rabinovich--Roch--Silbermann \cite{RRS}, resp., of Lange--Rabinovich \cite{lange_rabinovich} that on $\IR^n$ every quasi-local operator is approximable by finite propagation operators. This result was recently generalized by {\v{S}}pakula and Tikuisis \cite{st} to all metric spaces with finite decomposition complexity. The only other (partial) result that the author knows is his own \cite{engel_rough} that on spaces of polynomial growth one can approximate operators with a super-polynomially fast decaying dominating function by finite propagation operators.

Currently the main question in this matter is whether one can actually construct a counter-example:

\begin{question}
Does there exist a quasilocal operator which is not approximable by finite propagation operators?
\end{question}

The class of uniform pseudodifferential operators defined in this article is in the following sense connected to the above question: assume that we would have defined our class of operators in such a way that the $(-\infty)$-part would be an operator which is in the \Frechet closure\footnote{That is to say, in the closure with respect to the family of norms $(\|-\|_{-k,l}, \|-^\ast\|_{-k,l})_{k,l \in \IN}$, where $\|-\|_{-k,l}$ denotes the operator norm $H^{-k}(E) \to H^l(E)$.} of the finite propagation smoothing operators. Then the results of Section \ref{sec:uniformity_PDOs} would give a direct connection to the uniform Roe algebra: we would then be able to conclude $\overline{\UPsiDO^{-\infty}(E)} = \overline{\UPsiDO^{-1}(E)} = C_u^\ast(E)$, where $C_u^\ast(E)$ is the uniform Roe algebra of $E$, i.e., the closure of the finite propagation, uniformly locally compact operators on $E$.

If we would do the above, i.e., changing the definition from quasilocal to approximable by finite propagation operators, there would be one piece of information missing that we do have at our disposal by using quasilocal operators: recall that in the analysis of uniform pseudodifferential operators Lemma \ref{lem:exp(itP)_quasilocal} was the main technical ingredient which led, e.g., to Corollary \ref{cor:schwartz_function_of_PDO_quasilocal_smoothing} stating that if $f$ is a Schwartz function, then $f(P) \in \UPsiDO^{-\infty}(E)$ for $P$ a symmetric and elliptic uniform pseudodifferential operator of positive order. But the author does not know whether Lemma \ref{lem:exp(itP)_quasilocal} would also hold for the changed definition, i.e., whether under the conditions of that lemma the operator $e^{itP}$ would be approximable in the needed operator norm by finite propagation operators.

\begin{question}\label{quesnksdio2}
Does Lemma \ref{lem:exp(itP)_quasilocal} specialize to the statement that if the $(-\infty)$-part of $P$ is in the \Frechet closure of the finite propagation smoothing operators, then $e^{itP}$ is approximable by finite propagation operators of order $k$ in the operator norms $\|-\|_{lk,lk-k}$ for all $l \in \IZ$?
\end{question}

Given a generalized Dirac operator $D$, the construction of its rough index class\footnote{The construction of the rough index class is analogous to the construction of the coarse one. A suitable reference is, e.g., \cite[Section 4.3]{roe_coarse_cohomology}.} produces directly a representative of it with finite propagation. The reason for this is that the wave operator $e^{itD}$ has finite propagation.

If we have a symmetric and elliptic uniform pseudodifferential operator $P$, we get a rough index class $\ind(P) \in K_\ast(C_u^\ast(M))$ by first constructing $[P] \in K_\ast^u(M)$ and then mapping it by the rough assembly map to $K_\ast(C_u^\ast(M))$. But constructing $\ind(P)$ directly by the same procedure as above for Dirac operators, we get a problem: we only know from Lemma \ref{lem:exp(itP)_quasilocal} that $e^{itP}$ is a quasilocal operator with linearly decaying dominating function. Since we currently don't have an answer for the above Question~\ref{quesnksdio2}, we can not guarantee that this direct construction would produce a rough index class of $P$ which lives in the $K$-theory of the uniform Roe algebra, i.e., which is approximable by finite propagation operators.

In \cite{engel_rough} the author introduced a smooth subalgebra of the uniform Roe algebra consisting of those operators whose dominating functions is super-polynomially fast decaying. Since we showed in Lemma \ref{lem:exp(itP)_quasilocal} that $e^{itP}$ has a linearly decaying dominating function, the question is whether we can improve this result and so make it amenable to the techniques of \cite{engel_rough}. Note that Lemma \ref{lem:exp(itP)_quasilocal} does not assume anything on the dominating function of $P$, i.e., one might hope that one can get better rates of decay for the dominating function of $e^{itP}$ if one assume that $P$ itself already has good decay of its dominating funtion.

\begin{question}\label{question:wave_dominating_function}
Let $P$ be a symmetric and elliptic uniform pseudodifferential operator.

Does the dominating function of $e^{itP}$ have super-polynomial decay? Maybe if we assume that $P$ has finite propagation or a super-polynomially decaying dominating function?
\end{question}

\subsection*{Further questions about uniform pseudodifferential operators}

We know that the principal symbol map $\sigma^k$ induces an isomorphism of vector spaces $\UPsiDO^{k-[1]}(E,F) \cong \Symb^{k-[1]}(E,F)$ for all $k \in \IZ$ and vector bundles $E$, $F$ of bounded geometry. For the case $k = 0$ and $E = F$ we furthermore know from Proposition \ref{prop:PsiDOs_filtered_algebra} that $\UPsiDO^{0-[1]}(E)$ is an algebra, and $\sigma^0$ will be an isomorphism of algebras.

In the case that the manifold $M$ is compact, it is known that $\sigma^0$ is continuous against the quotient norm\footnote{Which is induced from the operator norm on $\Psi \mathrm{DO}^0(E) \subset \IB(L^2(E))$. Since for $M$ compact we have $\overline{\Psi \mathrm{DO}^{-1}(E)} = \IK(L^2(E))$, the quotient norm on $\Psi \mathrm{DO}^{0-[1]}(E)$ is called the \emph{essential norm}.} on $\Psi \mathrm{DO}^{0-[1]}(E)$ and therefore $\sigma^0$ will induce an isomorphism of $C^\ast$-algebras $\overline{\Psi \mathrm{DO}^{0-[1]}(E)} \cong \overline{\Symb^{0-[1]}(E)}$.

\begin{question}
Let $M$ be a non-compact manifold of bounded geometry. Does $\sigma^0$ induce an isomorphism of $C^\ast$-algebras $\overline{\UPsiDO^{0-[1]}(E)} \cong \overline{\Symb^{0-[1]}(E)}$?
\end{question}

To show this we would have to compare the quotient norms on $\UPsiDO^{0-[1]}(E)$ and on $\Symb^{0-[1]}(E)$. The first to prove similar results in the compact case were Seeley in \cite[Lemma 11.1]{seeley} and Kohn and Nirenberg in \cite[Theorem A.4]{kohn_nirenberg}, and two years later H{\"o}rmander provided in \cite[Theorem 3.3]{hormander_ess_norm} a proof of this for his class $S_{\rho, \delta}^0$ with $\delta < \rho$ of pseudodifferential operators of order $0$. Maybe one of these proofs generalizes to our case of uniform pseudodifferential operators on open manifolds.

The main technical part in the proof of Theorem \ref{thm:elliptic_symmetric_PDO_defines_uniform_Fredholm_module} that a uniform pseudodifferential operator defines a class in uniform $K$-homology was to show that the operator $\chi(P)$ is uniformly pseudolocal for $\chi$ a normalizing function. In Proposition \ref{prop:PDO_order_0_l-uniformly-pseudolocal} we have shown that uniform pseudodifferential operators of order $0$ are automatically uniformly pseudolocal. So if we could show that the operator $\chi(P)$ is a uniform pseudodifferential operator of order $0$, the proof of Theorem \ref{thm:elliptic_symmetric_PDO_defines_uniform_Fredholm_module} would follow immediately.

\begin{question}
Under which conditions on the function $f$ (or the operator $P$) will be $f(P)$ again a uniform pseudodifferential operator?
\end{question}

For a compact manifold $M$ there are quite a few proofs that under certain conditions functions of pseudodifferential operators are again pseudodifferential operators: the first one to show such a result was Seeley \cite{seeley_complex_powers}, where he proved it for complex powers of elliptic classical pseudodifferential operators. It was then extended by Strichartz \cite{strichartz} from complex powers to symbols in the sense of Definition \ref{defn:symbols_on_R}, and from classical operators to all of H{\"o}rmander's class $S^k_{1,0}(M)$. And last, we mention the result \cite[Theorem~8.7]{dimassi_sjostrand} of Dimassi and Sj{\"o}strand for $h$-pseudodifferential operators in the semi-classical setting.

Now if we want to establish similar results in our setting, we get quite fast into trouble: e.g., the proof of Strichartz does not generalize to non-compact manifolds. He crucially uses that on compact manifolds we may diagonalize elliptic operators, which is not at all the case on non-compact manifolds (consider, e.g., the Laplace operator on Euclidean space). Looking for a proof that may be generalized to the non-compact setting, we stumble over Taylor's result from \cite[Chapter XII]{taylor_pseudodifferential_operators}. There he proves a result similar to Strichartz' but with quite a different proof, which may be possibly generalized to non-compact manifolds. An evidence for this is given by Cheeger, Gromov and Taylor in \cite[Theorem 3.3]{cheeger_gromov_taylor}, since this is exactly the result that we want to prove for our uniform pseudodifferential operators, but in the special case of the operator $\sqrt{- \Delta}$, and their proof is a generalization of the one from the above cited book of Taylor. So it seems quite reasonable that we may probably extend the result of Cheeger, Gromov and Taylor to all uniform pseudodifferential operators in our sense.

The above ideas were already used by Kordyukov \cite{kordyukov_2} to derive $L^p$-estimates for functions of certain elliptic uniform pseudodifferential operators. Furthermore, in the same article he also used ideas surrounding the geometric optics equation, which are Taylor's main tool in \cite[Chapter VIII]{taylor_pseudodifferential_operators}, to show that functions of elliptic pseudodifferential operators with positive scalar principal symbol are again pseudodifferential operators.

Beals and Ueberberg both gave in their articles \cite{beals} and \cite{ueberberg} characterizations of pseudodifferential operators via certain mapping properties of these operators from the Schwartz space to its dual. From that they derived that the inverse, if it exists, of a pseudodifferential operator of order $0$ is again a pseudodifferential operator.

\begin{question}
Does there exists a similar characterization of uniform pseudodifferential operators on manifolds of bounded geometry as the one in \cite{beals} and \cite{ueberberg} by Beals and Ueberberg?
\end{question}

\bibliography{./Bibliography_Classes_of_elliptic_uniform_PDOs}
\bibliographystyle{amsalpha}

\end{document}